\newtheorem{theorem}{Theorem}[section]
\newtheorem{corollary}[theorem]{Corollary}
\newtheorem{lemma}[theorem]{Lemma}
\newtheorem{prop}[theorem]{Proposition}
\theoremstyle{definition}
\newtheorem{definition}[theorem]{Definition}
\newtheorem{example}[theorem]{Example}
\newtheorem{remark}[theorem]{Remark}
\DeclareMathOperator{\im}{im}
\DeclareMathOperator{\ini}{in}
\DeclareMathOperator{\id}{id}
\renewcommand{\hom}{\operatorname{Hom}}
\DeclareMathOperator{\rank}{rank}
\DeclareMathOperator{\ab}{ab}
\DeclareMathOperator{\gr}{gr}
\DeclareMathOperator{\Sym}{Sym}
\DeclareMathOperator{\Hilb}{Hilb}
\DeclareMathOperator{\GL}{GL}
\DeclareMathOperator{\ideal}{ideal}
\DeclareMathOperator{\lk}{lk}
\newcommand{\R}{\mathbb{R}}
\newcommand{\Q}{\mathbb{Q}}
\newcommand{\Z}{\mathbb{Z}}
\renewcommand{\Q}{\mathbb{Q}}
\newcommand{\g}{\mathfrak{g}}
\newcommand{\h}{\mathfrak{h}}
\newcommand{\bx}{\mathbf{x}}
\newcommand{\br}{\mathbf{r}}
\newcommand{\by}{\mathbf{y}}
\newcommand{\bw}{\mathbf{w}}
\newcommand{\fh}{\mathfrak{h}}
\newcommand{\fg}{\mathfrak{g}}
\newcommand{\fm}{\mathfrak{m}}
\newcommand{\Lie}{\mathfrak{lie}}
\newcommand{\cF}{\mathcal{F}}
\DeclareMathAlphabet{\pazocal}{OMS}{zplm}{m}{n}
\newcommand{\surj}{\twoheadrightarrow}
\newcommand{\abs}[1]{\left| #1 \right|}
\newcommand{\ncom}[1]{\langle\!\langle #1 \rangle\!\rangle}
\newcommand\isom{\xrightarrow{
 \,\smash{\raisebox{-0.6ex}{\ensuremath{\scriptstyle\simeq}}}\,}}
\def\dot{\mathchar"013A}  
\newcommand{\hdot}{{\raise1pt\hbox to0.35em{\huge $\dot$}}}
\begin{document}


\title[Cup products, lower central series, and holonomy Lie algebras]%
{Cup products, lower central series, and \\ holonomy Lie algebras}

\author[Alexander~I.~Suciu]{Alexander~I.~Suciu$^1$}
\address{Department of Mathematics,
Northeastern University,
Boston, MA 02115, USA}
\email{\href{mailto:a.suciu@northeastern.edu}{a.suciu@northeastern.edu}}
\urladdr{\href{http://web.northeastern.edu/suciu/}%
{http://web.northeastern.edu/suciu/}}
\thanks{$^1$Supported in part by the National Security Agency 
(grant H98230-13-1-0225) and the Simons Foundation 
(collaboration grant for mathematicians 354156)}

\author{He Wang}
\address{Department of Mathematics and Statistics,
University of Nevada, Reno, NV 89557, USA}
\email{\href{mailto:wanghemath@gmail.com}%
{hew@unr.edu}}
\urladdr{\href{http://wolfweb.unr.edu/homepage/hew/}%
{http://wolfweb.unr.edu/homepage/hew/}}

\subjclass[2010]{Primary
20F40,  
57M05. 
Secondary
16A27, 
17B70,    
20F14,  
20J05
}

\keywords{Lower central series, derived series, holonomy Lie algebra,  
graded formality, mildness, Magnus expansion, cohomology ring, 
Chen Lie algebra, link group, one-relator group, Seifert manifold}

\begin{abstract}
We generalize basic results relating the associated graded Lie algebra and 
the holonomy Lie algebra of a group, from finitely presented, commutator-relators 
groups to arbitrary finitely presented groups. Using the notion of ``echelon 
presentation," we give an explicit formula for the cup-product in the cohomology 
of a finite $2$-complex.  This yields an algorithm for computing the corresponding 
holonomy Lie algebra, based on a Magnus expansion method.  As an application, 
we discuss issues of graded-formality, filtered-formality, $1$-formality, and mildness. 
We illustrate our approach with examples drawn from a variety of group-theoretic 
and topological contexts, such as link groups, one-relator groups, and fundamental 
groups of orientable Seifert fibered manifolds.  
\end{abstract}

\maketitle
\setcounter{tocdepth}{1}
\tableofcontents

\section{Introduction}
\label{sect:intro}

Throughout this paper $G$ will be a finitely generated group. 
Our main focus will be on the cup-product in the rational 
cohomology of the $2$-complex associated to a presentation of $G$, and 
on several rational Lie algebras attached to such a group. 

\subsection{Magnus expansions and cup products} 
\label{intro:pres}
The notion of expansion of a group, which goes back to W.~Magnus 
\cite{Magnus35}, has been generalized and used in many ways.
For instance, a presentation for the Malcev Lie algebra of a finitely presented 
group was given by S.~Papadima \cite{Papadima95} and G.~Massuyeau \cite{Massuyeau12},   
while X.-S. Lin  \cite{Linxiaosong97}  studied expansions of fundamental groups 
of smooth manifolds.  Recently, D.~Bar-Natan \cite{Bar-Natan16} has generalized 
the notion of expansion and has introduced the Taylor expansion of an arbitrary ring. 
In turn, we explored  in \cite{SW-formality} various relationships between
expansions and formality properties of groups. 

We go back here to the classical Magnus expansion, and adapt it for 
our purposes.  Let $G$ be a group with a finite presentation 
$G=F/R=\langle x_1,\dots, x_n\mid r_1,\dots,r_m\rangle$. 
There exists then a $2$-complex $K=K_G$ associated to 
such a presentation.  
In the case when $G$ is a commutator-relators group, i.e., 
when all relators $r_i$ belong to the commutator subgroup $[F,F]$, 
R.~Fenn and D.~Sjerve  computed in \cite{Fenn-S} the cup-product map
\begin{equation}
\label{eq:cupMassey}
\xymatrixcolsep{20pt}
\xymatrix{\mu_K\colon H^1(K;\Z) \wedge H^1(K;\Z)\ar[r]& H^2(K;\Z)}, 
\quad u\wedge v \mapsto u\cup v,
\end{equation}
using  the Magnus expansion $M\colon \Z{F}\to \Z \ncom{\mathbf{x}}$
from the group ring of the free group $F=\langle x_1,\dots,x_n\rangle$ to 
the power series ring in $n$ non-commuting variables, which is the 
ring morphism defined by $M(x_i)=1+x_i$.

Our first objective in this work is to generalize this result of Fenn and Sjerve, 
from commutator-relators groups to arbitrary finitely presented groups. 
We will avoid possible torsion in the first homology of $G$ by working 
over the field of rationals.  To that end, we start by defining a Magnus-like 
expansion $\kappa=\kappa_G$ relative to such a group $G$ as the composition
\begin{equation}
\label{eq:kappa-intro}
\xymatrixrowsep{28pt}
\xymatrix{
\Q{F} \ar[r]^-M&  \widehat{T}(H_1(F;\Q))   
\ar^{\widehat{T}({\varphi_*})}[r] & \widehat{T}(H_1(G;\Q)),
}
\end{equation}  
where $\varphi\colon F\surj G$ is the canonical projection 
and $\widehat{T}(V)$ is the completed tensor algebra of a 
vector space $V$.
We then show in Proposition \ref{prop: basis} that there 
exists a group $G_e$ admitting  a `row-echelon' presentation,  
$G_e=\langle x_1,\dots, x_n\mid w_1,\dots,w_m\rangle$,
and a map $f\colon K_{G_e}\to K_{G}$ inducing an isomorphism 
in cohomology.

Using the $\kappa$-expansion of $G_e$, we determine in Theorem \ref{thm:cup G tilde} 
the cup-product map for $K_{G_e}$, from which we obtain in Theorem \ref{Thmcupproduct again} 
a formula for computing the cup-product map $\mu_K$, with $\Q$-coef\-ficients.  
Let $b=b_1(G)$ be the first Betti number of $G$, and let $\{u_{1},\dots, u_b\}$ 
and $\{\beta_{n-b+1},\dots, \beta_m\}$ be bases for $H^1(K;\Q)$ and $H^2(K;\Q)$, 
transferred from suitable bases in the rational cohomology of $K_{G_e}$ via the 
isomorphism $f^*\colon H^*(K_{G_e};\Q)\to H^*(K_{G};\Q)$. 
Our result then reads as follows.

\begin{theorem}
\label{thm:intro1}
Let $K$ be a presentation $2$-complex for a finitely presented group $G$. 
In the bases described above, the cup-product map $\mu_K\colon H^1(K;\Q) 
\wedge H^1(K;\Q)\rightarrow H^2(K;\Q)$  is  given by
\[
u_i \cup u_j=\sum_{k=n-b+1}^{m}  \kappa(w_k)_{i,j}\, \beta_k, \quad 
\textrm{for $1\leq i, j\leq b$}. 
\]
\end{theorem}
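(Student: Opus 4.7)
\medskip

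The plan is to reduce the statement to the corresponding formula for the echelon-presentation complex $K_{G_e}$, for which the cup-product has already been computed in Theorem \ref{thm:cup G tilde}, and then transport everything to $K$ through the ring isomorphism provided by Proposition \ref{prop: basis}.

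First, I would invoke Proposition \ref{prop: basis} to produce the row-echelon presentation $G_e=\langle x_1,\dots,x_n\mid w_1,\dots,w_m\rangle$ together with a cellular map $f\colon K_{G_e}\to K_G$ whose induced map $f^*\colon H^*(K_{G_e};\Q)\to H^*(K_G;\Q)$ is a graded-ring isomorphism. Let $\{u_1^e,\dots,u_b^e\}$ and $\{\beta_{n-b+1}^e,\dots,\beta_m^e\}$ denote the preferred bases of $H^1(K_{G_e};\Q)$ and $H^2(K_{G_e};\Q)$ from Theorem \ref{thm:cup G tilde}, so that by definition $u_i=f^*(u_i^e)$ and $\beta_k=f^*(\beta_k^e)$. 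Since $f^*$ is a ring isomorphism and hence preserves the wedge structure on degree one, one obtains
\[
u_i\cup u_j \;=\; f^*(u_i^e)\cup f^*(u_j^e) \;=\; f^*\!\bigl(u_i^e\cup u_j^e\bigr).
\]

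Next, applying Theorem \ref{thm:cup G tilde} to the echelon presentation gives an explicit expansion $u_i^e\cup u_j^e=\sum_{k=n-b+1}^{m}\kappa_{G_e}(w_k)_{i,j}\,\beta_k^e$ in $H^2(K_{G_e};\Q)$. Pushing this identity forward through $f^*$ and using $f^*(\beta_k^e)=\beta_k$ yields
\[
u_i\cup u_j \;=\; \sum_{k=n-b+1}^{m}\kappa_{G_e}(w_k)_{i,j}\,\beta_k.
\]
It remains only to identify the coefficients $\kappa_{G_e}(w_k)_{i,j}$ with the coefficients $\kappa(w_k)_{i,j}=\kappa_G(w_k)_{i,j}$ appearing in the statement. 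This is where I would have to be careful: the expansion $\kappa_H$ is defined relative to a group $H$ as the composition $\Q F\xrightarrow{M}\widehat{T}(H_1(F;\Q))\to \widehat{T}(H_1(H;\Q))$, so changing $H$ alters the target. The cohomological isomorphism $f^*$ implies an isomorphism $H_1(G_e;\Q)\cong H_1(G;\Q)$ compatible with the projections from $F$; under this identification the induced maps on $\widehat{T}(H_1(F;\Q))$ agree, so $\kappa_{G_e}(w_k)$ and $\kappa_G(w_k)$ have the same $(i,j)$ components in the chosen bases.

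The conceptual work of the proof is therefore already absorbed into Proposition \ref{prop: basis} and Theorem \ref{thm:cup G tilde}; the only real obstacle here is the last bookkeeping step, verifying that the identification of $H_1(G_e;\Q)$ with $H_1(G;\Q)$ coming from $f$ is precisely the one under which the bases $\{u_i^e\}$ and $\{u_i\}$ are dual to compatible generators of the respective abelianizations, so that the matrix entries $\kappa_{G_e}(w_k)_{i,j}$ are literally unchanged when passing to $\kappa_G(w_k)_{i,j}$. Once that naturality check is in place, the formula in the theorem follows immediately.
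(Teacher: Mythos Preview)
Your proposal is correct and follows essentially the same route as the paper: reduce to the echelon presentation via Proposition~\ref{prop: basis}, apply Theorem~\ref{thm:cup G tilde} there, and transport back through the cohomology isomorphism induced by $f$. The paper's argument (Theorem~\ref{Thmcupproduct again}) phrases the transport slightly differently---it evaluates $u_i\cup u_j$ on the homology classes $\gamma_k=f_*(e_k^2)$ and uses the adjunction $(u_i\cup u_j,\,f_*(e_k^2))=(f^*(u_i\cup u_j),\,e_k^2)$ together with $f^*(u_i)=u_i$---but this is the same idea in dual form.

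One small point of care: since $f\colon K_{G_e}\to K_G$, the induced map on cohomology goes $f^*\colon H^*(K_G;\Q)\to H^*(K_{G_e};\Q)$, so your formulas $u_i=f^*(u_i^e)$ and $\beta_k=f^*(\beta_k^e)$ should really use $(f^*)^{-1}$. This does not affect the argument. Your final bookkeeping observation about $\kappa_{G_e}$ versus $\kappa_G$ is correct and is handled implicitly in the paper by fixing a common basis $\by$ for $H_1(K_G;\Q)\cong H_1(K_{G_e};\Q)$ from the outset (\S\ref{subsec:transferred basis}), so that the two expansions literally coincide.
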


Let us note that the map $\mu_K$ depends on the chosen presentation for $G$, 
and may differ from the cup-product map $\mu_G$ in a classifying space for $G$. 
However, the two maps share the same kernel, which makes the algorithm for 
determining the map $\mu_K$ useful in other contexts,  
for instance, in computing the first resonance variety of $G$ 
(see e.g.~\cite{MS-tokyo, PS-crelle}), or  finding a presentation 
for the holonomy Lie algebra $\fh(G)$, a procedure that we discuss next.

\subsection{Holonomy Lie algebras} 
\label{intro:holo}

The {\em holonomy Lie algebra}\/ of a finitely generated group $G$, 
denoted by $\fh(G)$, is the quotient of the free Lie algebra on $H_1(G;\Q)$
by the Lie ideal generated by the image of the dual of the cup-product map 
$\mu_G$. It is easy to see that $\fh(G)$ is a graded Lie algebra over $\Q$ which 
admits a quadratic presentation depending only on $\ker \mu_G$.  
Moreover, this construction is functorial.  The holonomy Lie 
algebra was introduced by T.~Kohno in \cite{Kohno}, building on work of 
Chen \cite{Chen73}, and has been further studied in a number of papers, 
including \cite{Markl-Papadima, PS-imrn, SW-formality}.   

Our next objective is to find a presentation for the holonomy Lie algebra $\fh(G)$. 
We start by showing in Proposition \ref{prop:holo fp}
that there is a homomorphism from a finitely presented group $G_f$ 
to $G$ inducing an isomorphism on holonomy Lie algebras.   
Hence, without loss of generality, we may assume that $G$ admits
a finite presentation.  

Let $G_e=\langle x_1,\dots, x_n\mid w_1,\dots,w_m\rangle$ 
be a group with row-echelon presentation, and let 
$\rho\colon G_e\to G$ be the homomorphism 
induced on fundamental groups by the aforementioned 
map, $f\colon K_{G_e}\to K_G$.  
It follows from Corollary \ref{cor:holo map} that 
the induced map, $\h(\rho)\colon \h(G_e) \to \h(G)$, 
is an isomorphism of graded Lie algebras.   Using the computation of the 
cup-product map $\mu_{K_G}$ from Theorem \ref{thm:intro1}, 
we describe in Theorem \ref{ThmholonomyLie} an algorithm for finding a 
presentation for the holonomy Lie algebra $\fh(G)$.   
Furthermore, we obtain in Theorem \ref{thm:holo chen lie} 
a presentation for the derived quotients of this Lie algebra, 
$\fh(G)/\fh(G)^{(i)}$.  Our results may be summarized as follows.

\begin{theorem}
\label{thm:intro}
Let $G$ be a finitely presented group. 
The holonomy Lie algebra $\fh(G)$ is the quotient of the free 
$\Q$-Lie algebra with generators $\by=\{y_1,\dots, y_b\}$ in degree $1$ 
by the ideal $I$ generated by $\kappa_2(w_{n-b+1}),\dots,\kappa_2(w_{m})$, 
where $\kappa_2$ is the degree $2$ part of the Magnus expansion of $G_e$.
Furthermore, for each $i\ge 2$, the solvable quotient $\fh(G)/\fh(G)^{(i)}$ 
is isomorphic to $\Lie(\by)/( I + \Lie^{(i)}(\by))$. 
\end{theorem}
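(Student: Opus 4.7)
My plan is to reduce to the row-echelon group $G_e$ and then read off a presentation directly from the cup-product formula of Theorem \ref{thm:intro1}. The induced map $\h(\rho)\colon \h(G_e) \to \h(G)$ is an isomorphism of graded Lie algebras by Corollary \ref{cor:holo map}, so any presentation of $\h(G_e)$ with generators $\by$ transports under $\h(\rho)$ to a presentation of $\h(G)$ with the same generators and relations. This reduces the theorem to the case $G=G_e$.

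For this case, I would first unpack the definition: $\h(G_e)$ is the quotient of $\Lie(H_1(G_e;\Q))$ by the Lie ideal generated by the image of the transpose of the cup product $\mu_{G_e}$. Since the image of the transpose of a linear map is the annihilator of its kernel, and the remark following Theorem \ref{thm:intro1} says that $\mu_{K_{G_e}}$ and $\mu_{G_e}$ have the same kernel, I may replace $\mu_{G_e}$ by the presentation cup product $\mu_{K_{G_e}}$ when computing this ideal.

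Next, let $\by=\{y_1,\dots,y_b\}$ be the basis of $H_1(G_e;\Q)$ dual to $\{u_1,\dots,u_b\}$, and identify $H_1\wedge H_1 \cong \Lie_2(\by)\subset T^2(\by)$ via $y_i\wedge y_j\mapsto [y_i,y_j]=y_i\otimes y_j-y_j\otimes y_i$. The row-echelon condition places $w_k\in[F,F]$ for $k\ge n-b+1$, so $\kappa_1(w_k)=0$ and $\kappa_2(w_k)$ is a well-defined element of $H_1(G_e;\Q)^{\otimes 2}$; the skew-commutativity of the cup product, together with the formula $u_i\cup u_j=\sum_k \kappa(w_k)_{i,j}\,\beta_k$, forces the coefficient matrix $(\kappa(w_k)_{i,j})_{i,j}$ to be antisymmetric, so $\kappa_2(w_k)$ in fact lies in $\Lie_2(\by)$. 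Dualizing the same formula then shows that the element assigned to the dual basis vector of $\beta_k$ is precisely $\kappa_2(w_k)$ under this identification, whence $\h(G_e)\cong \Lie(\by)/I$ with $I$ generated by $\kappa_2(w_{n-b+1}),\dots,\kappa_2(w_m)$.

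Finally, for the derived-series statement, I would invoke the elementary identity that for any Lie ideal $I\subset \mathfrak{L}$ one has $(\mathfrak{L}/I)^{(i)}=(\mathfrak{L}^{(i)}+I)/I$, so that $(\mathfrak{L}/I)\big/(\mathfrak{L}/I)^{(i)}\cong \mathfrak{L}/(\mathfrak{L}^{(i)}+I)$; specializing to $\mathfrak{L}=\Lie(\by)$ yields the stated description of $\h(G)/\h(G)^{(i)}$. I expect the main obstacle to be the bookkeeping in the third paragraph — carefully checking, in whichever sign convention is in force, that dualization of the wedge-valued cup-product formula matches $\kappa_2(w_k)$ on the nose under the wedge/bracket identification — but this should be a direct, if fussy, verification from Theorem \ref{thm:intro1}.
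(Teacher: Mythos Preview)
Your proposal is correct and follows essentially the same path as the paper. The paper also reduces to the echelon group via Corollary~\ref{cor:holo map}, computes $\mu^{\vee}$ from the cup-product formula (Theorem~\ref{thm:cup G tilde}) to obtain $\mu^{\vee}(e^2_k)=\sum_{i<j}\kappa(w_k)_{i,j}\,(y_i\wedge y_j)$, identifies this with $\kappa_2(w_k)\in\Lie_2(\by)$, and then applies the elementary isomorphism $(\Lie/I)\big/(\Lie/I)^{(i)}\cong \Lie/(I+\Lie^{(i)})$ (Lemma~\ref{lem:DerivedLieIso}) for the solvable quotients. The only cosmetic differences are that the paper invokes Lemma~\ref{lem:holo inv} ($\fh(G)\cong\fh(K_P)$) rather than your ``same kernel'' argument, and uses Lemma~\ref{lem:symmetry} for the antisymmetry $\kappa(w_k)_{i,j}=-\kappa(w_k)_{j,i}$ rather than deducing it from skew-commutativity of the cup product; both routes are equivalent.
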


In the special case when $G$ admits a presentation with only 
commutator relators, presentations for these Lie algebras 
were given by Papadima and Suciu in \cite{PS-imrn}. 
For arbitrary finitely generated groups $G$, 
the metabelian quotient $\fh(G)/\fh(G)''$, also known as the 
holonomy Chen Lie algebra of $G$, is closely related to the first 
resonance variety of $G$, a geometric object which has been 
studied intensely from many points of view, see for instance 
\cite{MS-tokyo, PS-mrl, PS-crelle, SW-mz, SW-mccool} 
and references therein. 

\subsection{Lower central series, graded formality, and mildness}
\label{intro:gf mild} 

The Lie methods in group theory were introduced by 
W.~Magnus in \cite{Magnus37}, and further developed 
by E.~Witt \cite{Witt37}, M.~Hall \cite{Hall50}, M.~Lazard \cite{Lazard54}, 
and many more authors, see for instance \cite{Magnus-K-S}.  
The {\em associated graded Lie ring}\/ of a group $G$ 
is the graded Lie ring $\gr(G;\Z)$, whose graded 
pieces are the successive quotients of the lower central series of $G$, 
and whose Lie bracket is induced from the group commutator.   
The quintessential example is the associated graded Lie ring  
of the free group on $n$ generators, $F_n$, which is isomorphic 
to the free Lie ring  $\Lie(\Z^n)$. Much of the power of this method 
comes from the various connections between lower central series, 
nilpotent quotients, and group homology, as evidenced in the 
work of  J.~Stallings \cite{Stallings},  W.~Dwyer \cite{Dwyer}, and 
many others.  

We concentrate here on the associated graded Lie algebra over the  
rationals, $\gr(G)=\gr(G;\Z)\otimes \Q$, of a finitely generated group $G$.  
As we recall in \S\ref{subsec:Phimap}, there is a natural epimorphism 
$\Phi_G\colon \fh(G) \surj \gr(G)$.   Thus, the holonomy Lie algebra $\fh(G)$ 
may be viewed as a quadratic approximation to the associated graded Lie algebra of $G$.   
We say that the group $G$ is {\em graded-formal}\/ if the map $\Phi_G$ is an isomorphism 
of graded Lie algebras.  A much stronger requirement is that $G$ be {\em $1$-formal}, 
a condition we recall in \S\ref{subsec:completions}. For much more on these 
notions, we refer to \cite{PS-imrn, PS-bmssmr, SW-formality}. 

In Propositions \ref{prop:holo nq} and \ref{prop:derivedquo}, 
we compare the holonomy Lie algebra of $G$ with the holonomy 
Lie algebras of the nilpotent quotients $G/\Gamma_iG$ and the derived 
quotients $G/G^{(i)}$.  In Corollary  \ref{cor:chenlie-formal}, we use 
Theorem \ref{thm:intro}  and a result from \cite{SW-formality} to give 
explicit presentations for the graded Lie algebras $\gr\big(G/G^{(i)}\big)$ 
in the case when $G$ is a finitely presented, $1$-formal group. 

Some of the motivation for our study comes from the work of 
J.~Labute  \cite{Labute70, Labute85} and D.~Anick \cite{Anick87}, 
who gave presentations for the associated graded Lie algebra 
$\gr(G)$ in the case when $G$ is `mildly' presented.   
We revisit this topic in \S\ref{sect:mildness}, where we 
relate the notion of mild presentation to that of graded formality, 
and derive some consequences, especially in the context of link groups. 

\subsection{Further applications}
\label{intro:applications} 

We illustrate our approach with several classes of finitely presented groups,
including $1$-relator groups in \S\ref{sec:onerel}, and fundamental groups 
of orientable Seifert fibered $3$-manifolds with orientable base in \S\ref{sec:seifert mfd}.
We give here presentations for the holonomy Lie algebra $\h(G)$ and 
the Chen Lie algebra $\gr(G/G'')$ of such groups $G$.  
We also compute the Hilbert series of these graded Lie algebras, and 
discuss the formality properties of these groups.

This work was motivated by a desire to generalize some of the results 
of Fenn--Sjerve \cite{Fenn-S} and Papadima--Suciu \cite{PS-imrn}, 
from commutator-relators groups to 
arbitrary finitely generated groups.  In \cite{SW-formality}, we studied the 
formality properties of finitely generated groups,  focusing on the
filtered-formality and $1$-formality properties.  In related work, we 
apply the techniques developed in this paper and in \cite{SW-formality} 
to the study of several families of ``pure-braid like" groups.  For instance, 
we investigate in \cite{SW-mz} the pure virtual braid groups, and we investigate 
in \cite{SW-mccool} the McCool groups, also known as the pure welded 
braid groups.  A summary of these results, as well as further motivation 
and background can be found in \cite{SW-indam}. 

\section{Expansions for finitely presented groups}
\label{sect:q-magnus}

In this section, we introduce and study a Magnus-type expansion 
relative to a finitely presented group.  We start by reviewing some 
basic notions.

\subsection{Completed group algebras and expansions}
\label{subsec:completions}

Let $G$ be a finitely generated group.  As is well-known (see 
for instance \cite{Passi, Quillen68}), 
the group-algebra $\Q{G}$ has a natural Hopf algebra structure,
with comultiplication $\Delta\colon \Q{G}\to \Q{G}\otimes_{\Q}\Q{G}$ 
given by $\Delta(g)=g\otimes g$ for $g\in G$, and counit the 
augmentation map $\varepsilon\colon \Q G\rightarrow \Q$ 
given by $\varepsilon(g)=1$.  The powers of the augmentation 
ideal, $I=\ker \varepsilon$, form a descending, multiplicative filtration 
of $\Q{G}$.  The associated graded algebra, 
${\gr}(\Q{G})=\bigoplus_{k\geq 0}I^k/I^{k+1}$,  
comes endowed with the degree filtration, 
$\cF_k=\bigoplus_{j\geq k}I^j/I^{j+1}$.  
The corresponding completion, $\widehat{\gr}(\Q{G})$, 
is again an algebra, endowed with the inverse limit filtration. 

The $I$-adic completion of the group-algebra,  
$\widehat{\Q{G}}=\varprojlim_k \Q{G}/I^k$, 
also comes equipped with an inverse limit filtration. 
Applying the $I$-adic completion functor to the map 
$\Delta$ yields a comultiplication map   
$\widehat{\Delta}$, which makes $\widehat{\Q{G}}$ 
into a complete Hopf algebra, see~\cite[App.~A]{Quillen69}.

An element $x$ in a Hopf algebra is called {\em primitive}\/ if 
$\Delta x=x {\otimes} 1+ 1{\otimes} x$.
The set $\fm(G)$ of all primitive elements in $\widehat{\Q{G}}$, with bracket 
$[x,y]=xy-yx$, is a complete, filtered Lie algebra, called the \emph{Malcev Lie algebra}\/ of $G$.  
The set of all primitive elements in $\gr(\Q G)$ forms a graded Lie algebra,
which is isomorphic to the associated graded Lie algebra 
\begin{equation}
\label{eq:grg alg}
\gr(G):=\bigoplus\limits_{k\geq 1} \left(\Gamma_kG/\Gamma_{k+1}G\right) \otimes \Q,
\end{equation}
where $\{\Gamma_k G\}_{k\geq 1}$ is the \emph{lower central series}\/ of $G$,  
defined inductively by $\Gamma_1G=G$ and 
$\Gamma_{k+1} G=[\Gamma_kG,G]$ for $k\geq 1$. 
As shown by Quillen in \cite{Quillen68}, there is an isomorphism of graded 
Lie algebras, $\gr(\fm(G))\cong \gr(G)$.
 
The group $G$ is said to be {\em filtered-formal}\/ if its Malcev Lie algebra is isomorphic 
(as a filtered Lie algebra) to the degree completion of its associated graded Lie algebra.
The group $G$ is said to be {\em $1$-formal}\/ if its Malcev Lie algebra admits a 
quadratic presentation (see \cite{PS-bmssmr, SW-formality} for more details). 
For instance, all finitely generated free groups and free abelian groups 
are $1$-formal. 

An \emph{expansion}\/ for a group $G$ is a filtration-preserving 
algebra  morphism ${E}\colon\Q {G}\to \widehat{\gr}(\Q{G})$ with the 
property that $\gr({E})=\id$ 
(see \cite{Linxiaosong97, Bar-Natan16, SW-formality}). 
As shown in \cite{SW-formality}, a finitely generated group $G$ is 
filtered-formal if and only if it has an expansion $E$ which 
induces an isomorphism of complete Hopf algebras, 
$\widehat{E}\colon \widehat{\Q{G}}\to \widehat{\gr}(\Q{G})$.
 
\subsection{The Magnus expansion for a free group}
\label{subsec:magnus}
Let $F$ be a finitely generated free group, with generating set 
$\mathbf{x}=\{x_1,\dots,x_n\}$, and let $\Z{F}$ be its group-ring.  
Then the degree completion of the associated graded ring,  
$\widehat{\gr}(\Z{F})$, can be identified with the completed 
tensor ring $\widehat{T}(F_{\ab})=\Z\ncom{\bx}$, 
the power series ring over $\Z$ in $n$ non-commuting 
variables, by sending $[x_i-1]$ to $x_i$. 
There is a well known expansion 
$M\colon \Z{F}\to \Z \ncom{\bx}$, 
called the {\it Magnus expansion}, given by
\begin{equation}
\label{eq:agnus}
M(x_i)=1+x_i\quad \text{and} \quad  M(x_i^{-1})=1-x_i+x_i^2-x_i^3+\cdots.
\end{equation}
 
The \textit{Fox derivatives}\/ are the ring morphisms 
$\partial_i\colon \Z{F}\rightarrow  \Z{F}$ defined by the rules 
$\partial_i(1)=0$, $\partial_i(x_j)=\delta_{ij}$, and
$\partial_i(uv)=\partial_i(u)\varepsilon(v)+u\partial_i(v)$ for $u,v\in \Z{F}$,
where $\varepsilon\colon \Z{F}\to \Z$ is the augmentation map.
The higher Fox derivatives $\partial_{i_1,\dots, i_k}$ are then 
defined inductively. We refer to \cite{Fe,Fenn-S,Magnus-K-S,MS-tokyo} 
for more details and references on these notions.

The Magnus expansion can be computed in terms of the Fox derivatives, as follows.  
Given an element $y\in F$, if we write $M(y)=1+\sum a_I x_{I}$, then $a_I=\varepsilon_I(y)$,
where $I=(i_1,\dots,i_s)$, and $\varepsilon_I=\varepsilon \circ \partial_I$ is the composition of 
the augmentation map with the iterated Fox derivative $\partial_I\colon \Z{F}\to \Z{F}$.  
For each $k\ge 1$, let $M_k$ be the composite 
\begin{equation}
\label{eq:magnus}
\xymatrixcolsep{40pt}
\xymatrix{
\Z{F} \ar^(.46){M}[r] \ar^{M_k}@/^20pt/[rr] &  
\widehat{T}(F_{\ab})   \ar^(.46){\gr_k}[r] & 
 \gr_k(\widehat{T}(F_{\ab}))
}.
\end{equation}

For each $y$ in $F$, we have that $M_1(y)=\sum_{i=1}^n \varepsilon_i(y) x_i$, 
while for each $y$ in the commutator subgroup $[F,F]$, we have  
\begin{equation}
\label{eq:M2}
M_2(y)=\sum_{i<j} \varepsilon_{i,j}(y)(x_ix_j-x_jx_i).
\end{equation}   
 
The tensor algebra $T(F_{\Q})$ on the $\Q$-vector space 
$F_{\Q}=F_{\ab}\otimes \Q$ has a natural graded Hopf algebra structure, 
with comultiplication $\Delta$ and counit $\varepsilon$
given by $\Delta(a)= a\otimes 1+1\otimes a$ and $\varepsilon(a)= 0$ 
for $a\in F_{\Q}$.  The set of primitive elements in $T(F_{\Q})$ is the free
Lie algebra $\Lie(F_{\Q})=\{v \in T(F_{\Q}) \mid \Delta(v)=v \otimes 1+1\otimes v\}$,
with Lie bracket $[v,w]=v\otimes w-w\otimes v$.
Notice that, if $y\in[F,F]$, then $M_2(y)$ is a primitive element in the degree $2$ 
piece of the Hopf algebra ${T}(F_{\Q})= \gr(\widehat{T}(F_{\Q}))$,  
which corresponds to the degree $2$ 
element $\sum_{i<j} \varepsilon_{i,j}(y)[ x_i,x_j ]$ in the 
free Lie algebra $\Lie(F_{\Q})$.

\subsection{The Magnus expansion relative to a finitely generated group}
\label{subsec:kappa}

Given a finitely generated group $G$, there exists an epimorphism 
$\varphi\colon F\surj G$ from a free group $F$ of finite rank to $G$. 
Let $\varphi_{\ab}\colon F_{\ab}\surj G_{\ab}$ be the induced epimorphism 
between the respective abelianizations.  

\begin{definition}
\label{def:kappa}
The \textit{Magnus $\kappa$-expansion}\/ for $F$ relative to $G$, 
denoted by $\kappa_G$ (or $\kappa$ for short), is the composition 
\begin{equation}
\label{eq:magnus new}
\xymatrixcolsep{40pt}
\begin{gathered}
\xymatrix{
\Z{F} \ar^(.45){M}[r] \ar^{\kappa}@/^20pt/[rr] &  
\widehat{T}(F_{\ab})   \ar^{\widehat{T}({\varphi_{\ab}})}[r] & 
\widehat{T}(G_{\ab})
},
\end{gathered}
\end{equation}
where $M$ is the classical Magnus expansion for the free group $F$, 
and the morphism 
$\widehat{T}(\varphi_{\ab})\colon \widehat{T}(F_{\ab})\surj \widehat{T}(G_{\ab})$ 
is induced by the abelianization map $\varphi_{\ab}$.
\end{definition}

In particular, if the group $G$ is a commutator-relators group,
i.e., if all the relators of $G$ lie in the commutator subgroup $[F,F]$,
then the projection $\varphi_{\ab}$ identifies 
$F_{\ab}$ with $G_{\ab}$,  and the Magnus expansion 
$\kappa$ coincides with the classical Magnus expansion $M$.

More generally, let $G$ be a group generated by 
$\mathbf{x}=\{x_1,\dots,x_n\}$, and let $F$ be the free 
group generated by the same set.  
The rational Magnus $\kappa$-expansion, still denoted 
by $\kappa_G$ or $\kappa$, is the composition
\begin{equation}
\label{eq:magnus rational}
\xymatrixcolsep{40pt}
\xymatrix{
\Q{F} \ar^(.45){M}[r]   &  
\widehat{T}(F_{\Q})   \ar^{\widehat{T}({\pi})}[r] & 
\widehat{T}(G_{\Q})
},
\end{equation}
where $\pi=\varphi_{\ab}\otimes \Q=H_1(\varphi,\Q)$ is the induced epimorphism in homology from 
$F_{\Q}:=H_1(F;\Q)$ to $G_{\Q}:=H_1(G;\Q)$.
Pick a basis $\mathbf{y}=\{y_1,\dots,y_b\}$ for $G_{\Q}$, 
and identify $\widehat{T}(G_{\Q})$ with $\Q\ncom{\mathbf{y}}$.
Let $\kappa(r)_{I}$ be the coefficient of $y_{I}:=y_{i_1}\cdots y_{i_s}$ in $\kappa(r)$, 
for $I=(i_1,\dots, i_s)$. Then we can write 
\begin{equation}
\label{eq:kappar}
\kappa(r)=1+\sum_{I} \kappa(r)_{I}  y_{I} \, .
\end{equation}

\begin{lemma}
\label{lem:symmetry}
If $r\in \Gamma_k F$, 
then $\kappa(r)_I=0$, for $\abs{I}< k$.  Furthermore, 
if $r\in \Gamma_2 F$, then $\kappa(r)_{i,j}=-\kappa(r)_{j,i}$.
\end{lemma}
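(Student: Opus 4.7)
The plan is to exploit the factorization $\kappa = \widehat{T}(\pi) \circ M$ so as to reduce both statements to classical properties of the free-group Magnus expansion $M$, together with the elementary observation that $\widehat{T}(\pi)$ is a graded algebra morphism. Neither step is deep; the main work is tracking grading conventions.

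For the first claim, the input I need is the classical fact that $M$ satisfies $M(\Gamma_k F - 1) \subseteq \cF_k$, where $\cF_k$ denotes the degree-$\ge k$ part of $\widehat{T}(F_\Q)$. This follows from two standard observations: (a) $\Gamma_k F - 1 \subseteq I^k$, where $I$ is the augmentation ideal of $\Q F$ (a standard theorem in the Magnus--Quillen theory, obtained by induction on $k$ using the identity $[a,b]-1 = (a-1)(b-1) - (b-1)(a-1) + \text{higher}$ inside $\Q F$); and (b) $M$ is a filtration-preserving algebra morphism from $(\Q F, I)$ to $(\widehat{T}(F_\Q), \cF)$, which is immediate from the generator formulas in \eqref{eq:agnus}. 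Since $\widehat{T}(\pi)$ is degree-preserving, applying it to $M(r) - 1 \in \cF_k$ gives $\kappa(r) - 1 \in \cF_k$, which is precisely the assertion that $\kappa(r)_I = 0$ for $|I| < k$.

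For the second claim, I specialize to $k = 2$. Formula \eqref{eq:M2} gives
\[
M_2(r) = \sum_{i<j} \varepsilon_{i,j}(r)\,(x_i x_j - x_j x_i),
\]
an antisymmetric tensor in $T^2(F_\Q)$. Since $\widehat{T}(\pi)$ is $\Q$-linear and preserves tensor degree, applying it (degree by degree) yields
\[
\kappa_2(r) = \sum_{i<j} \varepsilon_{i,j}(r)\,\bigl(\pi(x_i)\pi(x_j) - \pi(x_j)\pi(x_i)\bigr)
\]
in $T^2(G_\Q)$. Expanding each $\pi(x_i)$ in the chosen basis $\by = \{y_1,\dots,y_b\}$ and collecting coefficients in the monomial basis $\{y_i y_j\}$ of $T^2(G_\Q)$, the resulting expression is a $\Q$-linear combination of commutators $y_i y_j - y_j y_i$; hence its coefficient at $y_i y_j$ is the negative of its coefficient at $y_j y_i$, giving $\kappa(r)_{i,j} = -\kappa(r)_{j,i}$.

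The only real ``obstacle'' is bookkeeping: verifying that $\widehat{T}(\pi)$ is a well-defined degree-preserving map and that the filtration statement $\Gamma_k F - 1 \subseteq I^k$ is correctly invoked. Both are standard, and in fact the second part can be phrased more conceptually --- since $M_2(r)$ is a primitive element of $T^2(F_\Q)$ (with respect to the Hopf structure in which generators are primitive) and $\widehat{T}(\pi)$ is a coalgebra morphism for that structure, $\kappa_2(r)$ is primitive in degree $2$, hence antisymmetric --- but the direct calculation above is the most transparent route.
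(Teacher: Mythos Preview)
Your argument is correct. For the first assertion your route is the same as the paper's, only spelled out in more detail: the paper simply invokes $M(r)_I=\varepsilon_I(r)=0$ for $|I|<k$ (citing \cite{MS-tokyo}) and concludes, leaving the reader to note that $\widehat{T}(\pi)$ is graded.

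For the second assertion the paper takes a different route. Rather than using formula~\eqref{eq:M2} directly, it passes to the completed \emph{symmetric} algebras via the projections $\alpha_1\colon \widehat{T}(F_{\Q})\to\widehat{\Sym}(F_{\Q})$ and $\alpha_2\colon \widehat{T}(G_{\Q})\to\widehat{\Sym}(G_{\Q})$, observes that $\alpha_1\circ M(r)=1$ for $r\in[F,F]$, and hence that $\alpha_2\circ\kappa(r)=1$; reading off the degree~$2$ part of this identity gives $\kappa(r)_{i,j}+\kappa(r)_{j,i}=0$. Your approach is arguably more direct, since it simply transports the antisymmetry already recorded in~\eqref{eq:M2} through the linear map $\pi\otimes\pi$. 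The paper's approach, on the other hand, is more uniform: it shows in one stroke that the entire symmetric part of $\kappa(r)-1$ vanishes in every degree (not just degree~$2$), and it does not rely on~\eqref{eq:M2} having been established beforehand. Both arguments are short and essentially equivalent in strength for the purpose at hand.
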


\begin{proof}
Since $M(r)_I=\varepsilon_I(r)=0$ for $\abs{I}< k$ (see for instance 
\cite{MS-tokyo}), we have that $\kappa(r)_I=0$ for $\abs{I}< k$.  
To prove the second assertion, identify the completed 
symmetric algebras $\widehat{\Sym}(F_{\Q})$ 
and $\widehat{\Sym}(G_{\Q})$ with the power series rings 
$\Q[\![\mathbf{x}]\!]$ and $\Q[\![\mathbf{y}]\!]$, respectively,  
in the following commuting diagram of $\Q$-linear maps.
\begin{equation}
\label{formula:sym}
\xymatrix{   
  \Q F\ar[dr]^{\kappa}\ar[r]^{ M} & \widehat{T}(F_{\Q})\ar[d]^{\widehat{T}(\pi)} 
  \ar[r]^{\alpha_1} & \widehat{\Sym}(F_{\Q})\ar[d]^{\widehat{\Sym}(\pi)} \phantom{\,.} \\
   & \widehat{T}(G_{\Q}) \ar[r]^{\alpha_2} &\widehat{\Sym}(G_{\Q}) \,.
}
\end{equation}

When $r\in[F,F]$, we have that $\alpha_2\circ \kappa(r)=
\widehat{\Sym}(\pi)\circ \alpha_1\circ M(r)=1$.
Thus, $\kappa_i(r)=0$ and $\kappa(r)_{i,j}+\kappa(r)_{j,i}=0$.
\end{proof}

\begin{lemma}
\label{lemmasum}
If $u,v \in F$ satisfy $\kappa(u)_J=\kappa(v)_J=0$ for all $\abs{J}<s$, 
for some $s\geq 2$, then 
\[
\kappa(uv)_{I}=\kappa(u)_{I}+\kappa(v)_{I},\quad  \text{for $\abs{I}=s$}.
\] 
Moreover, the above formula is always true for $s=1$.
\end{lemma}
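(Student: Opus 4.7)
The plan is to exploit the fact that $\kappa$ is a ring homomorphism. Indeed, by Definition~\ref{def:kappa} (and its rational variant \eqref{eq:magnus rational}), the map $\kappa\colon \Q F \to \widehat{T}(G_\Q)$ is the composition of two algebra morphisms, namely the Magnus expansion $M$ and the morphism $\widehat{T}(\pi)$ induced on completed tensor algebras by $\pi = H_1(\varphi;\Q)$. Hence $\kappa(uv) = \kappa(u)\kappa(v)$, and the problem reduces to reading off the degree~$s$ coefficient of the product of two elements of $\widehat{T}(G_\Q)$.

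Now I would use expansion~\eqref{eq:kappar} to write
\[
\kappa(u) = 1 + \sum_{J} \kappa(u)_J \, y_J, \qquad
\kappa(v) = 1 + \sum_{J} \kappa(v)_J \, y_J.
\]
Under the assumption that $\kappa(u)_J = \kappa(v)_J = 0$ for all $\abs{J} < s$, the nonconstant terms in $\kappa(u)$ and $\kappa(v)$ have degree at least $s$. Multiplying out, the degree $s$ component of $\kappa(u)\kappa(v)$ is therefore
\[
\sum_{\abs{I}=s} \kappa(u)_I \, y_I + \sum_{\abs{I}=s} \kappa(v)_I \, y_I
+ \text{(cross terms of degree $\geq 2s$)}.
\]
Since $2s > s$ whenever $s \geq 1$, the cross terms contribute nothing in degree~$s$, and we obtain $\kappa(uv)_I = \kappa(u)_I + \kappa(v)_I$ for $\abs{I} = s$.

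For the $s = 1$ assertion, the hypothesis as stated would read $\kappa(u)_J = \kappa(v)_J = 0$ for all $\abs{J} < 1$, which never holds since the constant term of $\kappa(u)$ is~$1$. The point is that the hypothesis is not needed: the degree~$1$ part of $\kappa(u)\kappa(v)$ always equals $\sum_i (\kappa(u)_i + \kappa(v)_i) y_i$, because any product of two positive-degree terms lies in degree $\geq 2$. Thus the formula is valid for $s = 1$ without any restriction on $u$ and $v$.

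There is no real obstacle here; the only subtlety is to notice that the multiplicativity of $\kappa$ (which is immediate from its definition as a composition of ring maps) reduces the claim to a completely formal manipulation of power series in the non-commuting variables $y_1,\dots,y_b$.
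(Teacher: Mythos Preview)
Your proof is correct and follows essentially the same route as the paper: use that $\kappa$ is a ring homomorphism so $\kappa(uv)=\kappa(u)\kappa(v)$, expand both factors, and observe that the cross terms land in degree $\ge 2s$. One small remark on notation: in the expansion \eqref{eq:kappar} the sum runs over non-empty multi-indices $I$, so the constant term $1$ is not of the form $\kappa(u)_J$; thus for $s=1$ the hypothesis is vacuously satisfied rather than ``never holds,'' though of course this does not affect your argument.
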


\begin{proof}
We have that $\kappa(uv)=\kappa(u)\kappa(v)$ for $u,v\in F$. 
If $\kappa(u)_J=\kappa(v)_J=0$ for all $|J|<s$, then 
$\kappa(u)=1+\sum_{\abs{I}=s}\kappa(u)_{I}y_I$ up to higher-order terms, 
and similarly for $\kappa(v)$.  Then
\begin{equation}
\label{eq:kuv}
\kappa(uv)=\kappa(u)\kappa(v)=1+\sum_{\abs{I}=s}(\kappa(u)_{I}+
\kappa(v)_{I})y_I+\text{higher-order terms}.
\end{equation}
Therefore, $\kappa(uv)_i=\kappa(u)_i+\kappa(v)_i$, and so 
$\kappa(uv)_{I}=\kappa(u)_{I}+\kappa(v)_{I}$.
\end{proof}

\subsection{Truncating the Magnus expansions}
\label{subsec:truncate magnus}
Recall that we defined in \eqref{eq:magnus} truncations 
$M_k$ of the Magnus expansion $M$ of a free group $F$.  
In a similar manner, we can also define the truncations 
of the Magnus expansion $\kappa$ for 
any finitely generated group $G$.

\begin{lemma}
\label{lem: reduceMagnus}
For each $k\ge 1$, the following diagram commutes:
\begin{equation}
\label{formula:kappa}
\begin{gathered}
\xymatrixcolsep{11pt}
\xymatrix{   
  \Q F\ar[drr]^{\kappa}\ar[rr]^(.48){ M} 
  && \widehat{T}(F_{\Q})
  \ar[d]^{\widehat{T}(\pi)} \ar[rr]^{\gr_k\quad} 
  &&\gr_k( \widehat{T}(F_{\Q}))\ar[d]^{\gr_k(\widehat{T}(\pi))}
  \ar@{=}[r] &\bigotimes^k \Q^n\ar[d]^{\otimes^k \pi}\phantom{.}
  \\
   && \widehat{T}(G_{\Q}) \ar[rr]^{\gr_k\quad} 
   &&\gr_k( \widehat{T}(G_{\Q}))
    \ar@{=}[r] &
   \bigotimes^k\Q^b . }
\end{gathered}
\end{equation}
\end{lemma}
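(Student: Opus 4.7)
The plan is essentially a functoriality argument: the diagram decomposes into a triangle on the left and a square on the right, and each piece commutes for standard reasons.

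First I would note that the triangle on the left, namely $\kappa = \widehat{T}(\pi)\circ M$, is nothing other than the defining composition of the Magnus $\kappa$-expansion (Definition \ref{def:kappa}, as recorded in \eqref{eq:magnus rational}). So that part is immediate.

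Next I would dispatch the square on the right by invoking the functoriality of the associated-graded construction. The ring morphism $\widehat{T}(\pi)\colon \widehat{T}(F_\Q)\to \widehat{T}(G_\Q)$ is filtration-preserving for the augmentation filtrations on both sides (since $\pi$ is a linear map and $\widehat{T}(\pi)$ is induced componentwise from its tensor powers). Hence it induces a graded map $\gr_\bullet(\widehat{T}(\pi))$, and in each fixed degree $k$ we get by naturality a commuting square
\[
\xymatrix{
\widehat{T}(F_\Q) \ar[r]^{\gr_k} \ar[d]_{\widehat{T}(\pi)} & \gr_k(\widehat{T}(F_\Q)) \ar[d]^{\gr_k(\widehat{T}(\pi))}\\
\widehat{T}(G_\Q) \ar[r]^{\gr_k} & \gr_k(\widehat{T}(G_\Q)).
}
\]

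Finally I would make the identification on the far right: the completed tensor algebra $\widehat{T}(V)$ of a finite-dimensional $\Q$-vector space $V$ is already a graded-complete algebra whose degree-$k$ piece is $V^{\otimes k}$, so $\gr_k(\widehat{T}(F_\Q)) = F_\Q^{\otimes k} \cong (\Q^n)^{\otimes k}$, and similarly for $G$. Under these identifications, the map $\gr_k(\widehat{T}(\pi))$ is, by construction of $\widehat{T}(\pi)$ from $\pi$, exactly the $k$-fold tensor power $\pi^{\otimes k}\colon (\Q^n)^{\otimes k}\to (\Q^b)^{\otimes k}$. Gluing the triangle and the square, and applying these identifications on the right-hand column, yields the claimed commutative diagram \eqref{formula:kappa}.

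There is no real obstacle here; the lemma is a bookkeeping statement assembling the definition of $\kappa$ with the functoriality of $\gr_k$ on the category of filtration-preserving morphisms. The only point that needs a line of justification is that $\widehat{T}(\pi)$ actually preserves the filtrations (so that $\gr_k$ applies), which is immediate from the fact that it is a graded map of tensor algebras completed with respect to the degree filtration.
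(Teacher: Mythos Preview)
Your proposal is correct and follows essentially the same approach as the paper: both argue that the left triangle commutes by the very definition of $\kappa$, and that the remaining square(s) commute by the functoriality/linearity of the constructions applied to $\pi$. Your write-up is slightly more detailed (you explicitly justify that $\widehat{T}(\pi)$ preserves filtrations and spell out the identification $\gr_k(\widehat{T}(V))\cong V^{\otimes k}$), but the underlying argument is the same bookkeeping verification.
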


\begin{proof}
The triangle on the left  of diagram \eqref{formula:kappa} commutes, 
since it consists of ring morphisms, by the definition 
of the Magnus expansion for a group.  The morphisms in the two squares 
are homomorphisms between $\Q$-vector spaces. The squares commute, 
since $\pi$ is a linear map.
\end{proof}

In diagram \eqref{formula:kappa}, let us denote the composition of 
$\kappa$ and $\gr_k$ by $\kappa_k$. We then obtain the diagram
\begin{equation}
\label{eq:kappai}
\begin{gathered}
\xymatrixcolsep{40pt}
\xymatrix{
\Q{F} \ar^(.5){\kappa}[r] \ar^{\kappa_k}@/^20pt/[rr] &  
\widehat{T}(G_{\Q})   \ar^{\gr_k}[r] & 
 \gr_k(\widehat{T}(G_{\Q}))
}.  
\end{gathered}
\end{equation}
In particular, $\kappa_1(r)=\sum_{i=1}^b \kappa(r)_i y_i$ for $r\in F$. 
By Lemma \ref{lem:symmetry}, if $r\in [F,F]$, then
\begin{equation}
\label{eq:kappa2}
\kappa_2(r)=\sum_{1\leq i<j\leq b} \kappa(r)_{i,j}(y_iy_j-y_jy_i).
\end{equation}
Notice that $\kappa_2(r)$ is a primitive element in the Hopf algebra ${T}(G_{\Q})$,  
which corresponds to the 
element $\sum_{i<j} \kappa_{i,j}(r)[ y_i,y_j ]$ in the 
free Lie algebra $\Lie(G_{\Q})$.

The next lemma provides a close connection between the Magnus 
expansion $\kappa$ and the classical Magnus expansion $M$. 
 
\begin{lemma}
\label{lem:quasiMagnus}
Let $(a_{i,s})$ be the  $b\times n$ matrix associated to the linear map 
$\pi\colon F_{\Q}\rightarrow G_{\Q}$, and let 
$r\in F$ be an arbitrary element. Then, for each $1\leq i,j\leq b$, we have that
\begin{equation*}
\kappa(r)_i=\sum\limits_{s=1}^n a_{i,s}\varepsilon_{s}(r) \quad \textrm{ and } \quad
\kappa(r)_{i,j}=\sum\limits_{s,t=1}^n a_{i,s}a_{j,t}\varepsilon_{s,t}(r).
\end{equation*}
\end{lemma}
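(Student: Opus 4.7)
The plan is to unfold the definition of $\kappa$ as the composition $\widehat{T}(\pi)\circ M$ and read off coefficients. Since $\widehat{T}(\pi)$ is the tensor algebra functor applied to the linear map $\pi\colon F_{\Q}\to G_{\Q}$, the matrix entries $(a_{i,s})$ give $\pi(x_s)=\sum_{i=1}^b a_{i,s}y_i$, and hence on degree-$k$ tensors $\widehat{T}(\pi)(x_{s_1}\cdots x_{s_k})=\prod_{\ell}\pi(x_{s_\ell})$. This is the only input needed beyond the Fox-calculus expression for $M(r)$.

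First I would write out the Magnus expansion of $r$ in low degrees using the Fox derivatives as recalled in \S\ref{subsec:magnus}, namely
\[
M(r)\;=\;1+\sum_{s=1}^n \varepsilon_s(r)\,x_s+\sum_{s,t=1}^n\varepsilon_{s,t}(r)\,x_s x_t+\text{(higher-order terms)}.
\]
Next I would apply $\widehat{T}(\pi)$ termwise. In degree one this yields
\[
\sum_{s=1}^n \varepsilon_s(r)\,\pi(x_s)\;=\;\sum_{s=1}^n\varepsilon_s(r)\sum_{i=1}^b a_{i,s}\,y_i\;=\;\sum_{i=1}^b\Bigl(\sum_{s=1}^n a_{i,s}\varepsilon_s(r)\Bigr)y_i,
\]
and comparing with the expansion $\kappa(r)=1+\sum_I \kappa(r)_I y_I$ from \eqref{eq:kappar} gives the first formula.

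In degree two, using multiplicativity of $\widehat{T}(\pi)$ on the monomial $x_s x_t$,
\[
\widehat{T}(\pi)(x_s x_t)\;=\;\pi(x_s)\pi(x_t)\;=\;\Bigl(\sum_{i=1}^b a_{i,s}y_i\Bigr)\Bigl(\sum_{j=1}^b a_{j,t}y_j\Bigr)\;=\;\sum_{i,j=1}^b a_{i,s}a_{j,t}\,y_iy_j,
\]
so the degree-two component of $\kappa(r)$ becomes
\[
\sum_{i,j=1}^b\Bigl(\sum_{s,t=1}^n a_{i,s}a_{j,t}\varepsilon_{s,t}(r)\Bigr)y_iy_j,
\]
whence the second formula. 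There is no genuine obstacle here; the only thing to be careful about is to keep the non-commutative order of the $y_i$'s straight when expanding the product $\pi(x_s)\pi(x_t)$, since otherwise one would inadvertently symmetrize the indices $(i,j)$ and lose the skew-symmetry recorded in Lemma~\ref{lem:symmetry}.
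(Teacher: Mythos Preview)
Your argument is correct and follows essentially the same route as the paper's proof: both expand $M(r)$ via Fox derivatives and then push forward through $\widehat{T}(\pi)$ degree by degree, using $\pi(x_s)=\sum_i a_{i,s}y_i$ and the fact that $\widehat{T}(\pi)$ acts as $\pi^{\otimes k}$ on degree-$k$ tensors. The only cosmetic difference is that the paper cites Lemma~\ref{lem: reduceMagnus} to record the commutation $\kappa_k=(\otimes^k\pi)\circ M_k$, whereas you verify this directly inline.
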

\begin{proof}
By assumption, $\pi(x_s)=\sum_{i=1}^ba_{i,s}y_i$. By 
Lemma \ref{lem: reduceMagnus} (for $k=1$), we have 
\[
\kappa_1(r)=\pi\circ M_1(r)=
\pi\left(\sum_{s=1}^n\varepsilon_s(r)x_s\right)=
\sum_{s=1}^n\sum_{i=1}^ba_{i,s}\varepsilon_s(r)y_i,
\]
which gives the claimed formula for $\kappa(r)_i$. By 
Lemma \ref{lem: reduceMagnus} (for $k=2$), we have
\[
\kappa_2(r)=\pi\otimes \pi \circ M_2(r)=\pi\otimes \pi \left(\sum_{s,t=1}^n 
\varepsilon_{s,t}(r)x_s\otimes x_t \right)=\sum_{s,t=1}^n\sum_{i,j=1}^b 
\varepsilon_{s,t}(r)a_{i,s}a_{j,t} y_i\otimes y_j,
\]
which gives the claimed formula for $\kappa(r)_{i,j}$.
\end{proof}

\section{Echelon presentations and cellular chain complexes}
\label{sect:ecpres}

In this section we associate to every finitely presented group $G$  
an ``echelon approximation", $G_e$, such that they have
isomorphic cohomology on their respective $2$-complexes.  

\subsection{Presentation $2$-complex}
\label{subsec:2complex}
We start with a brief review 
of the cellular chain complexes associated to a presentation 
$2$-complex of a group, following the exposition from 
\cite{Brown, Fe, Fenn-S, MS-tokyo, PS-tams}.   Let $G$ be a 
group with a finite presentation $P=\langle \bx\mid \br\rangle$, where 
$\mathbf{x}=\{x_1,\dots, x_n\}$ and $\mathbf{r}=\{r_1,\dots,r_m\}$. 
Then $G=F/R$, where $F$ is the free group on generating set $\bx$ and 
$R$ is the (free) subgroup of $F$ normally generated by the set $\br\subset F$.

Let $K_P$ be the $2$-complex associated to this presentation of $G$, 
consisting of a $0$-cell $e^0$, one-cells $\{e_1^1,\cdots,e_n^1\}$ 
corresponding to the generators, and two-cells $\{e_1^2, \dots, e_m^2\}$ 
corresponding to the relators.  The $2$-complex $K_P$ depends on the 
presentation $P$ for the group $G$. However, if the presentation is 
understood, we may also denote this $2$-complex by $K_G$.

The (integral) cellular chain complex $C_*=C_*(K_P;\Z)$ is 
of the form $C_2\xrightarrow{d_2} C_1 \xrightarrow{d_1} C_0$, 
where $C_j$ are the free abelian groups on the specified bases.  
Furthermore, $d_1=0$, while the matrix of the boundary map 
$d_2\colon C_2(K_P;\Z)\rightarrow C_1(K_P;\Z)$ is the $m\times n$ Jacobian 
matrix $J_P=(\varepsilon_i(r_k))$. 

Next, let $p\colon \widetilde{K_{P}}\to K_P$ be the universal cover of the 
presentation $2$-complex, and fix a lift $\tilde{e}^0$ of the basepoint $e^0$.   
The cells $e^i_j$ of $K_P$ lift to cells $\tilde{e}^i_j$ at the basepoint $\tilde{e}^0$.  
Let $\widetilde{C}_{*}=C_*(\widetilde{K_{P}};\Z)$ be the (equivariant) cellular chain 
complex of the universal cover.  This is a chain complex of free $\Z{G}$-modules 
of the form $\widetilde{C}_2\xrightarrow{\tilde{d}_2} 
\widetilde{C}_1 \xrightarrow{\tilde{d}_1} \widetilde{C}_0$, with $\widetilde{C}_0=\Z{G}$, 
$\widetilde{C}_1=(\Z{G})^n$ generated by the set $\{\tilde{e}^1_1,\dots, \tilde{e}^1_n \}$,  
and  $\widetilde{C}_2=(\Z{G})^m$  generated by the set $\{\tilde{e}^2_1,\dots, \tilde{e}^2_m \}$.  
The differentials in this chain complex are the $\Z{G}$-linear maps given by
\begin{equation}
\label{eq:differentials}
\tilde{d}_1(\tilde{e}^1_i)=x_i-1, \quad 
\tilde{d}_2(\tilde{e}^2_j)=\sum_{k=1}^m \varphi(\partial_k r_j) \tilde{e}^1_k, 
\end{equation}
where $\varphi\colon F\surj G$ is the presenting homomorphism for our group. 

\subsection{Echelon presentations}
\label{subsec:echelon}
We now introduce a special type of group presentations which will play 
an important role in the sequel.  

\begin{definition}
\label{def:echelon}
Let $G$ be a group with a finite presentation 
$P=\langle \bx\mid \bw\rangle$, where 
$\mathbf{x}=\{x_1,\dots, x_n\}$ and $\mathbf{w}=\{w_1,\dots,w_m\}$. 
We say $P$ is an \textit{echelon presentation}\/  
if the augmented Fox Jacobian matrix $(\varepsilon_i(w_k))$ 
is in row-echelon form.
\end{definition}

Let $K_G$ be the $2$-complex associated to the above 
presentation for $G$.  Suppose the pivot elements of  the 
$m\times n$ matrix $(\varepsilon_i(w_k))$ 
are in position $\{i_1,\dots, i_d\}$, and let $b=n-d$.
Since this matrix  is in row-echelon form, 
the vector space $H_1(K_G;\Q)=\Q^{b}$ has basis 
$\by=\{y_1,\dots, y_b\}$, where $y_j=e^1_{i_{d+j}}$ for $1\leq j\leq b$.
Furthermore, the vector space  $H_2(K_G;\Q)=\Q^{m-d}$ has basis 
$\{e^2_{d+1},\dots,  e^2_{m}\}$.
We will choose as basis for $H^1(K_{G};\Q)$ the set $\{u_1,\dots, u_b\}$, 
where $u_i$ is the Kronecker dual to $y_i$.   

\begin{remark}
\label{rem:commrel}
Suppose $G$ admits a commutator-relators presentation of 
the form $P=F/R$, with $R\subset [F,F]$.  Then the augmented 
Fox Jacobian matrix $(\varepsilon_i(r_k))$ is the zero matrix,   
and thus the presentation $P$ is an echelon presentation. In this case,
the integer (co)homology groups of $K_G$ are torsion-free, and so 
the aforementioned choices of bases work for integer (co)homology, 
as well.
\end{remark}

More generally, the next proposition shows that for any finitely presented group, 
we can construct a group with an echelon presentation such that the 
cohomology groups of the corresponding presentation $2$-complexes are isomorphic.

\begin{prop}
\label{prop: basis} 
Let $G$ be a finitely presented group.
There exists then a group $G_e$ with echelon presentation, 
and a map $f\colon K_{G_e}\to K_G$ between the respective 
presentation $2$-complexes such that the induced homomorphism 
on fundamental groups, $\rho=f_{\sharp} \colon G_e \surj G$, is  
surjective, and the induced homomorphism in cohomology, 
$f^*\colon H^*(K_G;\Z)\to  H^*(K_{G_e};\Z)$,  
in an isomorphism. 
\end{prop}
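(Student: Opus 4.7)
The plan is to begin with an arbitrary finite presentation $P=\langle x_1,\dots,x_n\mid r_1,\dots,r_m\rangle$ of $G$ and perform a finite sequence of Tietze-type moves on the relators that, at the level of the augmented Fox Jacobian matrix $J_P=(\varepsilon_i(r_k))$, implement integer elementary row operations and terminate in a matrix in row-echelon form. Because these moves will preserve the normal closure $R=\ncom{r_1,\dots,r_m}$ in $F$, the resulting presentation $P_e=\langle\bx\mid\bw\rangle$ presents the same group, so we may take $G_e=G$ and $\rho=\id$, automatically surjective. The map $f\colon K_{G_e}\to K_G$ will be built as the identity on the common $1$-skeleton, extended over the $2$-cells following the algebraic recipe building the $w_k$'s out of the $r_k$'s.

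The three elementary moves I would use are (i) swapping two relators $r_i\leftrightarrow r_j$, (ii) replacing $r_i$ by $r_i^{-1}$, and (iii) replacing $r_i$ by $r_ir_j$ for some $j\neq i$. The identities $\varepsilon_i(ab)=\varepsilon_i(a)+\varepsilon_i(b)$, $\varepsilon_i(a^{-1})=-\varepsilon_i(a)$, and $\varepsilon_i(waw^{-1})=\varepsilon_i(a)$ show that these correspond respectively to a row swap, a row negation, and the addition of row $j$ to row $i$ on $J_P$, and each move is clearly a Tietze transformation preserving the normal closure of the relator set. A standard integer row-reduction (in the spirit of the Hermite normal form algorithm) then terminates after finitely many such moves in a matrix with the required staircase structure.

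To construct $f$ and verify that $f^*$ is an isomorphism, I would work one elementary move at a time. For the only nontrivial move, (iii), the new $2$-cell $e^2_i$ attached along $w_i=r_ir_j$ has its boundary disk subdivided along a diameter into two half-disks, one attached along $r_i$ and the other along $r_j$, which are mapped onto $e^2_i$ and $e^2_j$ in the target; all other $2$-cells and the entire $1$-skeleton are mapped identically. The induced chain map on $C_2$ is the elementary matrix $I+E_{ij}\in\GL_m(\Z)$. Composing over the full reduction sequence, $f_\ast\colon C_\ast(K_{G_e};\Z)\to C_\ast(K_G;\Z)$ is the identity in degrees $0$ and $1$ and a product of elementary matrices in degree $2$, hence an element of $\GL_m(\Z)$. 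Thus $f_\ast$ is a chain isomorphism and, by dualization, $f^*\colon H^*(K_G;\Z)\to H^*(K_{G_e};\Z)$ is an isomorphism.

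The main obstacle is the passage from the purely algebraic row-reduction on $J_P$ to an honest cellular map $f$ whose induced chain map realizes the corresponding matrix in $\GL_m(\Z)$; building $f$ incrementally, one Tietze move at a time with the half-disk decomposition above, keeps this transparent, since each intermediate map is an obvious simple-homotopy equivalence of $2$-complexes and the full cellular map is obtained by composition.
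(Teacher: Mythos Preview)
Your argument is correct and follows essentially the same strategy as the paper: row-reduce the augmented Fox Jacobian over $\Z$ by operations on the relators, and realize these operations by a cellular map whose induced chain map on $C_2$ is the corresponding matrix in $\GL_m(\Z)$.

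The only noteworthy difference is organizational. The paper performs the reduction in one stroke: it takes the full change-of-basis matrix $C\in\GL(m;\Z)$, sets $w_k=r_1^{c_{1,k}}\cdots r_m^{c_{m,k}}$, and extends the identity on the $1$-skeleton over each $2$-cell using the obvious null-homotopy of $w_k$ in $K_G$; this yields $f_2=C$ directly, but only guarantees a surjection $\rho\colon G_e\surj G$, since the paper does not argue that the $w_k$ normally generate the same subgroup as the $r_k$. You instead factor $C$ into elementary matrices and realize each as a Tietze move (swap, invert, multiply one relator into another), building $f$ as a composite of simple-homotopy equivalences. This buys you the extra conclusion that the normal closure is preserved at each step, hence $G_e\cong G$ and $\rho=\id$, which is slightly stronger than what the proposition asserts or what the paper proves. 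Conversely, the paper's one-shot definition of the $w_k$ is more compact and avoids tracking a sequence of intermediate complexes.
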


\begin{proof}
Suppose $G$ has presentation $\langle x_1,\dots, x_n\mid r_1,\dots,r_m\rangle$.
As in the above discussion, the matrix of the boundary map 
$d_2^*\colon C^1(K_G;\Z) \to C^2(K_G;\Z)$ 
is the transpose of the $m\times n$ Jacobian matrix 
$(\varepsilon_i(r_k))$. By Gaussian elimination over $\Z$, 
there exists a matrix $C=(c_{l,k})\in \GL(m;\Z)$ 
such that $C\cdot d_2^*$ is in row-echelon form (also 
known as Hermite normal form).  We define a new group,
\begin{equation}
\label{eq:Gtilde}
G_e=\langle x_1,\dots, x_n\mid w_1, \dots, w_m\rangle,
\end{equation}
by setting $w_k=r_1^{c_{1,k}}r_2^{c_{2,k}}\cdots r_m^{c_{m,k}}$ 
for $1\leq k\leq m$.  

Let $h\colon K^{(1)}_{G_e} \to K^{(1)}_G$ be the homeomorphism 
between the $1$-skeleta of the respective $2$-complexes obtained 
by matching $1$-cells. If 
$\psi_k\colon S^1\to K^{(1)}_{G_e}$ denotes the attaching map of the  
$2$-cell in $K_{G_e}$ corresponding to the relator $w_k$, then 
by construction $h\circ \psi_k$ is null-homotopic in $K_G$.  Thus, 
$h$ extends to a cellular map $f\colon K_{G_e} \to K_G$. 
Clearly, the induced homomorphism 
$\rho=f_{\sharp}\colon G_e\to G$ is surjective.
Furthermore, the map $f$ induces a chain map between the 
respective cellular chain complexes, $f_*\colon C_*(K_{G_e};\Z) \to 
C_*(K_G;\Z)$, with $f_2$ given by the matrix $C$. 
It is now straightforward to see that the map $f$ induces an isomorphism 
in homology, and thus, by the Universal Coefficients theorem, 
an isomorphism in cohomology, too.
\end{proof}

Note that the group $G_e$ constructed above depends on the 
given (finite) presentation for $G$, not just on the isomorphism type of $G$.  
On the other hand, if $G$ is a commutator-relators group, then, 
by Remark \ref{rem:commrel}, the group 
$G_e$ is isomorphic to $G$.

\subsection{A transferred basis} 
\label{subsec:transferred basis}

Once again, let $G$ be a group admitting a finite presentation 
$\langle \bx \mid \br\rangle$, where $\bx=\{x_1,\dots, x_n\}$ and 
$\br=\{r_1,\dots, r_m\}$, and let $K_G$ be the corresponding 
presentation $2$-complex.   Using an echelon approximation 
for the given presentation, we describe now convenient bases for 
the $\Q$-vector spaces $H^1(K_G,\Q)$ and  $H^2(K_G,\Q)$, 
which will be used extensively in the next two sections.  

By Proposition \ref{prop: basis}, there exists a group $G_e$ with 
echelon presentation $\langle \bx \mid \bw \rangle$, where 
$\bw=\{w_1,\dots, w_m\}$, and a map $f\colon K_{G_e}\to K_G$ 
inducing an isomorphism in (co)homology.
As in \S\ref{subsec:echelon}, we may choose a basis 
$\by=\{y_1,\dots, y_b\}$ for the $\Q$-vector space $H_1(K_G;\Q)\cong H_1(K_{G_e};\Q)$;  
let $\{u_{1},\dots, u_b\}$ be the dual basis for $H^1(K_G;\Q)\cong H^1(K_{G_e};\Q)$. 
We also choose a basis $\{z_1,\dots, z_m\}$ for $C_2(K_G;\Q)$ and a basis 
$\{e^2_1,\dots, e^2_m\}$ for $C_2(K_{G_e};\Q)$ corresponding to 
$\{1\otimes \tilde{e}_1^2, \dots, 1\otimes \tilde{e}_m^2\}$. 
Finally, if we set
\begin{equation}
\label{eq:gammak}
\gamma_k:=f_*(e_k^2)=\sum_{l=1}^m c_{l,k} z_l \,  , 
\end{equation}
then $\{\gamma_1, \dots \gamma_m\}$ 
is another basis for $C_2(K_G;\Q)$. Furthermore, $\{e_{d+1}^2,\dots,e_m^2\}$ 
is a basis for $H_2(K_{G_e};\Q)$ and $\{\gamma_{d+1},\dots, \gamma_m\}$ 
is a basis for $H_2(K_G;\Q)$.
Thus, $H^2(K_G;\Q)$ has dual basis $\{\beta_{d+1},\dots, \beta_m\}$.

\section{Group presentations and (co)homology}
\label{sect:pres}

We compute in this section the cup-product in 
the cohomology ring of the $2$-complex of a finitely 
presented group in terms of the 
Magnus expansion associated to the presentation. 
 
\subsection{A chain transformation}
\label{subsec:chaintrans}
We start by reviewing the classical bar construction. 
Let $G$ be a discrete group, and let $B_*(G)$ be the normalized 
bar resolution (see e.g.~\cite{Brown, Fenn-S}), 
where $B_p(G)$ is the free left $\Z G$-module on generators 
$[g_1|\dots|g_p]$, with $g_i\in G$ and 
$g_i\neq 1$, and $B_0(G)=\Z{G}$ is free on one generator, $[\ ]$.
The boundary operators are $G$-module homomorphisms, 
$\delta_p\colon B_p(G)\rightarrow B_{p-1}(G)$, defined by
\begin{equation}
\label{eq:bar diff}
\delta_p[g_1|\dots|g_p]=g_1[g_2|\dots|g_p]
+\sum\limits_{i=1}^{p-1}(-1)^i[g_1|\dots|g_ig_{i+1}|\dots|g_p]
+(-1)^p[g_1|\dots|g_{p-1}].
\end{equation}
In particular, $\delta_1[g]=(g-1)[\ ]$ and $\delta_2[g_1|g_2]=g_1[g_2]-[g_1g_2]+[g_1]$. 
Let $\varepsilon\colon B_0(G)\to\Z$ be the augmentation map.  
We then have a free resolution $B_{*}(G)\xrightarrow{\varepsilon} \Z$ 
of the trivial $G$-module $\Z$.

We view here $\Z$ as a right $\Z{G}$-module, with action 
induced by the augmentation map. 
An element of the cochain group $B^p(G)=\hom_{\Z{G}}(B_p(G),\Z)$ 
may be viewed as a set function $u\colon G^p\to \Z$ 
satisfying the normalization condition $u(g_1,\dots,g_p)=0$ if some $g_i=1$.
The cup-product of two $1$-dimensional classes 
$u,u^{\prime}\in H^1(G;\Z)\cong \hom(G,\Z)$ 
is given by
\begin{equation}
\label{eq:cup}
u\cup u^{\prime}[g_1|g_2]=u(g_1)u^{\prime}(g_2).
\end{equation}

For future use, we record a result due to Fenn and Sjerve (\cite[Thm.~ 2.1 and p.~327]{Fenn-S}).

\begin{lemma}[\cite{Fenn-S}]
\label{lem:fenn}
There exists a chain transformation 
$T\colon C_*(\widetilde{K_{G}})\rightarrow B_*(G)$ 
of augmented chain complexes, 
\begin{equation*}
\xymatrix{
0 & \Z\ar[l]\ar@{=}[d] & C_0(\widetilde{K_{G}})\ar[l]_{\varepsilon}\ar[d]^{T_0} 
&C_1(\widetilde{K_{G}})\ar[l]_{\tilde{d}_1}\ar[d]^{T_1} 
& C_2(\widetilde{K_{G}})\ar[l]_{\tilde{d}_2}\ar[d]^{T_2} &0\ar[l]\ar[d] & \cdots \phantom{.}\ar[l] \\
0 & \Z\ar[l] & B_0(G)\ar[l]_{\varepsilon}
&B_1(G)\ar[l]_{\delta_1} & B_2(G)\ar[l]_{\delta_2} 
&B_3(G)\ar[l]& \cdots . \ar[l]  }
\end{equation*}
defined by  $T_0(\lambda):=\lambda[~]$, 
\begin{equation}
\label{eq:transformation}
T_1(\tilde{e}_i^1)= [x_i] \quad and \quad 
T_2(\tilde{e}_k^2)=\tau_1T_1\tilde{d}_2(\tilde{e}^2_k),
\end{equation}
where $\tau_1\colon B_1(G)\to B_2(G)$
is the homomorphism defined by
\begin{equation}
\label{eq:tau}
\tau_1(g[g_1])=[g|g_1],
\end{equation}
for all $g,g_1\in G$.
\end{lemma}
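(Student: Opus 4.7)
The plan is to verify, one square at a time, that the prescribed maps $T_0$, $T_1$, $T_2$ (extended $\Z G$-linearly from their values on the specified cellular generators) commute with the differentials. Since both chain complexes consist of free $\Z G$-modules in the relevant range, it suffices to check the commutation relations on the generators $\tilde{e}^0$, $\tilde{e}^1_i$, and $\tilde{e}^2_k$.

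The augmentation square is immediate, as $T_0$ is by construction the identity on $\Z G = B_0(G)$. The next square, $\delta_1 T_1 = T_0 \tilde{d}_1$, is a one-line check on the generator $\tilde{e}^1_i$: using \eqref{eq:differentials}, the left-hand side is $\delta_1[x_i] = (x_i-1)[\,]$, while the right-hand side is $T_0(x_i - 1) = (x_i-1)[\,]$.

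The main step is the square $\delta_2 T_2 = T_1 \tilde{d}_2$. The key observation is that the $\Z$-linear map $\tau_1$ from \eqref{eq:tau} is one step of the standard contracting homotopy of the normalized bar resolution, viewed as a resolution of $\Z$ by free $\Z$-modules. Together with the analogous $\Z$-linear map $\tau_0\colon B_0(G)\to B_1(G)$, $\tau_0(g[\,])=[g]$, it satisfies
\[
\delta_2\tau_1 + \tau_0\delta_1 = \id_{B_1(G)},
\]
as one verifies at once on a generator $g[g_1]$ using \eqref{eq:bar diff}. Applying this identity to $T_1\tilde{d}_2(\tilde{e}^2_k)$ gives
\[
\delta_2 T_2(\tilde{e}^2_k) \;=\; \delta_2\tau_1 T_1 \tilde{d}_2(\tilde{e}^2_k) \;=\; T_1 \tilde{d}_2(\tilde{e}^2_k) \;-\; \tau_0 \delta_1 T_1 \tilde{d}_2(\tilde{e}^2_k),
\]
and the last term vanishes by the previous square together with $\tilde{d}_1\tilde{d}_2 = 0$, namely $\delta_1 T_1 \tilde{d}_2 = T_0 \tilde{d}_1 \tilde{d}_2 = 0$. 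This yields the desired relation on generators.

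The only mildly delicate point—and the one place a naive reader might get confused—is that $\tau_1$ is only $\Z$-linear, not $\Z G$-linear, so it cannot be used to define $T_2$ directly as a composition. One instead applies $\tau_1 T_1 \tilde{d}_2$ to the distinguished free $\Z G$-generators $\tilde{e}^2_k$ (where no nontrivial $\Z G$-coefficient intervenes) and then extends $T_2$ to all of $\widetilde{C}_2$ by $\Z G$-linearity; the commutation check above then propagates $\Z G$-linearly, and no further obstacle arises.
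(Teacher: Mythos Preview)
Your proof is correct. The contracting-homotopy identity $\delta_2\tau_1 + \tau_0\delta_1 = \id_{B_1(G)}$ is exactly the right tool, and your remark about $\tau_1$ being only $\Z$-linear (so that $T_2$ must be defined on the free generators and then extended $\Z G$-linearly) is a genuine point worth making explicit.

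As for comparison: the paper does not supply its own proof of this lemma. It is stated with attribution to Fenn and Sjerve \cite[Thm.~2.1 and p.~327]{Fenn-S} and simply recorded for later use. So there is no in-paper argument to compare your approach against; your write-up effectively fills in what the paper leaves as a citation.
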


\subsection{Cup products for echelon presentations} 
\label{subsec:cup gtilde}

Now let $G$ be a group with echelon presentation 
$G=\langle \bx \mid \bw \rangle$, where $\bx=\{x_1,\dots, x_n\}$ and 
$\bw=\{w_1,\dots, w_m\}$, as in Definition \ref{def:echelon}. 
We let $B_*(G;\Q)=\Q\otimes B_*(G)$ and $B^*(G;\Q)=\Q\otimes B^*(G)$.

\begin{lemma} 
\label{lem:uk}
For each basis element 
$u_i\in H^1(K_{G};\Q)\cong H^1(G;\Q)$ 
as above, and each $r\in F$, we have that 
\[
u_i([\varphi(r)])=\sum_{s=1}^n\varepsilon_s(r)a_{i,s}=\kappa_i(r), 
\]
where $(a_{i,s})$ is the $b\times n$ matrix for the projection 
map $\pi\colon F_{\Q}\to G_{\Q}$.
\end{lemma}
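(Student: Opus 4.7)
The plan is to unwind what the bar-complex evaluation $u_i([\varphi(r)])$ really means, and to reduce it to Lemma \ref{lem:quasiMagnus}. The second equality $\sum_s \varepsilon_s(r)a_{i,s}=\kappa(r)_i$ is exactly the first formula of that lemma, so I would concentrate on establishing the first equality $u_i([\varphi(r)])=\sum_s\varepsilon_s(r)a_{i,s}$.

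First, I would observe that under the canonical isomorphisms
\[
H^1(K_G;\Q)\cong H^1(G;\Q)\cong\hom(G_\Q,\Q),
\]
the class $u_i$ is, by construction, the linear functional on $G_\Q=H_1(K_G;\Q)$ that is Kronecker dual to $y_i$. Viewed as a $1$-cocycle in $B^1(G;\Q)$, this means $u_i([g])$ depends only on the image of $g$ in $G_\Q$ (indeed, the cocycle identity applied to $\delta_2[g_1|g_2]$ forces $g\mapsto u_i([g])$ to be a homomorphism $G\to\Q$, and by definition its induced map on $G_\Q$ is the Kronecker dual of $y_i$). In particular, for each generator $x_s$,
\[
u_i([x_s])=u_i(\pi(x_s))=u_i\!\left(\sum_{k=1}^{b}a_{k,s}y_k\right)=a_{i,s}.
\]

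Next, for an arbitrary $r\in F$, I would use the classical Fox-calculus identity $\bar r=\sum_{s=1}^{n}\varepsilon_s(r)\,x_s$ in $F_\Q$, which follows from the Leibniz rule satisfied by the Fox derivatives together with the defining relations $\partial_i(x_j)=\delta_{ij}$. Applying $\pi$, the image of $\varphi(r)$ in $G_\Q$ is $\sum_s\varepsilon_s(r)\pi(x_s)$. By $\Q$-linearity of the homomorphism associated to $u_i$, this yields
\[
u_i([\varphi(r)])=\sum_{s=1}^{n}\varepsilon_s(r)\,u_i([x_s])=\sum_{s=1}^{n}\varepsilon_s(r)\,a_{i,s},
\]
and a second application of Lemma \ref{lem:quasiMagnus} finishes the proof.

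The lemma is essentially a bookkeeping result, so there is no single hard step; the only subtlety is being careful that the identification $H^1(K_G;\Q)\cong H^1(G;\Q)$ really does send the chosen cellular Kronecker-dual $u_i$ to the corresponding group-cohomology homomorphism $G\to\Q$. One can make this explicit by dualizing the chain transformation $T$ of Lemma \ref{lem:fenn} and using that $T_1(\tilde e^1_s)=[x_s]$, but since both $H^1$'s are canonically $\hom(G_\Q,\Q)$, invoking the chain transformation is not strictly necessary here—it will be essential only for the cup-product computation that follows.
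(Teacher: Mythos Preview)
Your proof is correct and follows essentially the same approach as the paper: both arguments view $u_i$ as a group homomorphism $G\to\Q$ via the identification $H^1(G;\Q)\cong\hom(G,\Q)$, use the abelianization identity $\bar r=\sum_s\varepsilon_s(r)x_s$ to obtain $u_i([\varphi(r)])=\sum_s\varepsilon_s(r)u_i([x_s])$, compute $u_i([x_s])=a_{i,s}$ from $\pi(x_s)=\sum_k a_{k,s}y_k$ and $u_i=y_i^*$, and then invoke Lemma~\ref{lem:quasiMagnus} for the last equality. Your write-up simply supplies more of the routine justifications (the cocycle argument, the Fox-calculus identity) that the paper leaves implicit.
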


\begin{proof}
If $r\in F$, then $\varphi(r)\in G$ and $[\varphi(r)]\in B_1(G)$.  
Hence, 
\begin{equation}
\label{eq:ui}
u_i([\varphi(r)])=\sum_{s=1}^n \varepsilon_s(r) u_i([x_s])
=\sum_{s=1}^n\varepsilon_{s}(r)a_{i,s} 
=\kappa_i(r).
\end{equation}

Since $H^1(G;\Q)\cong B^1(G;\Q)\cong \hom(G,\Q)$, we may 
view $u_i$ as a group homomorphism. This yields the first equality 
in \eqref{eq:ui}. Since $\pi(x_s)=\sum_{j=1}^b a_{i,s} y_i$ and $u_i=y_i^*$, 
the second equality follows. The last equality follows from Lemma \ref{lem:quasiMagnus}.
\end{proof}

\begin{theorem}
\label{thm:cup G tilde}
Let $G$ be a group with echelon presentation 
$G=\langle \bx \mid \bw \rangle$.
The cup-product map $\mu_{K_G}\colon H^1(K_{G};\Q)\wedge 
H^1(K_{G};\Q)\rightarrow H^2(K_{G};\Q)$ 
is given by
$(u_i\cup u_j,  e^2_k)=\kappa( w_k)_{i,j}$, 
for $1\leq i,j\leq b$ and $d+1\leq k\leq m$, where $\kappa$ is the 
Magnus expansion of $G$.
\end{theorem}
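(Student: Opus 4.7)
The plan is to transport the pairing to the bar resolution $B_*(G)$ via the Fenn--Sjerve chain transformation $T\colon C_*(\widetilde{K_G})\to B_*(G)$ of Lemma~\ref{lem:fenn}, where the cup product is given by the transparent formula~\eqref{eq:cup}.  By naturality of the cup product under chain maps of free resolutions of $\Z$, one may compute
\[
(u_i\cup u_j,\,e^2_k)=(u_i\cup u_j)\bigl(T_2(\tilde{e}^2_k)\bigr),
\]
where on the right $u_i,u_j$ are viewed as group homomorphisms $G\to\Q$ and the cup product is in $B^*(G;\Q)$.

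The first concrete step is to unravel $T_2(\tilde{e}^2_k)=\tau_1 T_1\tilde{d}_2(\tilde{e}^2_k)$.  Substituting the boundary formula $\tilde{d}_2(\tilde{e}^2_k)=\sum_s \varphi(\partial_s w_k)\,\tilde{e}^1_s$ from~\eqref{eq:differentials}, applying the $\Z G$-linear map $T_1$ with $T_1(\tilde{e}^1_s)=[x_s]$, and then extending $\tau_1$ from~\eqref{eq:tau} $\Z$-linearly in its first slot yields
\[
T_2(\tilde{e}^2_k)=\sum_{s=1}^n \bigl[\varphi(\partial_s w_k)\,\big|\, x_s\bigr],
\]
where each $\partial_s w_k\in\Z F$ is expanded as a formal $\Z$-combination of elements of $F$ before being pushed through $\varphi$ into the bar symbol.

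Plugging this into~\eqref{eq:cup}, each summand contributes $u_i\bigl(\varphi(\partial_s w_k)\bigr)\cdot u_j(x_s)$.  The second factor equals $a_{j,s}$, since $u_j$ is dual to $y_j$ and $\pi(x_s)=\sum_i a_{i,s}\,y_i$.  By Lemma~\ref{lem:uk}, extended $\Z$-linearly, the first factor equals $\kappa_i(\partial_s w_k)=\sum_t a_{i,t}\,\varepsilon_{t,s}(w_k)$, where $\varepsilon_{t,s}(w_k)=\varepsilon(\partial_t\partial_s w_k)$.  Summing over $s$ and then relabeling the dummy indices $s\leftrightarrow t$ produces $\sum_{s,t} a_{i,s}\,a_{j,t}\,\varepsilon_{s,t}(w_k)$, which is precisely $\kappa(w_k)_{i,j}$ by Lemma~\ref{lem:quasiMagnus}, as required.

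The main obstacle I anticipate is bookkeeping rather than substance: keeping the order conventions for iterated Fox derivatives consistent between Lemmas~\ref{lem:uk} and~\ref{lem:quasiMagnus}, and verifying that the $\Z$-linear extensions of $\tau_1$ and of $u_i\circ\varphi$ interact correctly with the bar-complex normalization conventions, so that $u_i(1)=0$ and $[1\,|\,g]=0$ automatically discard any contributions coming from the kernel of $\varphi$.
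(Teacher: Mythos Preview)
Your proposal is correct and follows essentially the same route as the paper's proof: both transport the pairing to $B_*(G)$ via the Fenn--Sjerve chain map, expand $T_2(\tilde e^2_k)$ using \eqref{eq:differentials}--\eqref{eq:tau}, apply the cup formula \eqref{eq:cup} together with Lemma~\ref{lem:uk} to each factor, and then identify the resulting double sum $\sum_{s,t} a_{i,s}a_{j,t}\varepsilon_{s,t}(w_k)$ with $\kappa(w_k)_{i,j}$ via Lemma~\ref{lem:quasiMagnus}. The only cosmetic difference is that the paper writes out $\partial_t w_k=\sum_{x\in F}p^x_{tk}\,x$ explicitly, whereas you keep this implicit and invoke the $\Z$-linear extension of Lemma~\ref{lem:uk}; the index swap you flag is genuine but harmless, and your closing remarks about normalization are on point.
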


\begin{proof}
Let us write the Fox derivative $\partial_t( w_k)$ as a finite sum, 
\begin{equation}
\label{eq:foxderivative}
\partial_t( w_k)=\sum_{x\in F} p_{tk}^x x,
\end{equation}
for $1\leq t\leq n$, and $1\leq k\leq m$. 
We then have
\begin{align}
T_2(\tilde{e}_k^2)&=\tau_1T_1 (\tilde{d}_2(\tilde{e}_k^2))
& \text{by~\eqref{eq:transformation}}\notag \\
&=\tau_1T_1\left(\varphi(\partial_1(w_k)),\dots,\varphi(\partial_n(w_k)) \right)
& \text{by~\eqref{eq:differentials}}\label{eq:t2}\\
&=\tau_1\Big(\sum_{t=1}^n \varphi\big(\partial_t(w_k)\big)[x_t]\Big) 
& \text{by~ \eqref{eq:transformation}}\notag\\
&=\sum_{t=1}^n\sum_{x\in F} p_{tk}^x[\varphi(x)|x_t].
& \text{by~\eqref{eq:tau}}\notag
\end{align}

The chain transformation 
$T\colon C_*(\widetilde{K_{G}};\Q)\rightarrow B_*(G;\Q)$ 
induces an isomorphism on first cohomology, 
$T^*\colon H^1(G;\Q)\rightarrow H^1(K_{G};\Q)$. 
Let us view $u_i$ and $u_j$ as elements in $H^1(G;\Q)$. 
We then have
\begin{align*}
 (u_i\cup u_j, 1\otimes \tilde{e}_k^2)
 &=(u_i\cup u_j, 1\otimes T_2(\tilde{e}_k^2)) & \\
 &=(u_i\cup u_j, \sum_{t=1}^n\sum_{x\in F} p_{tk}^x[\varphi(x)|x_t])
 &  \text{by \eqref{eq:t2}}\\
 &=\sum_{t=1}^n\sum_{x\in F} p_{tk}^xu_i(\varphi(x))u_j(x_t)
 & \text{by \eqref{eq:cup}}\\
 &=\sum_{t=1}^n\sum_{x\in F} p_{tk}^xu_i(\varphi(x))a_{j,t} 
 & \text{by~ Lemma~ \ref{lem:uk}}\\
 &=\sum_{t=1}^n\sum_{x\in F} p_{tk}^x\sum_{s=1}^n a_{i,s}\varepsilon_s(x)a_{j,t}  
 & \text{by~ Lemma~ \ref{lem:uk}}\\
 &=\sum_{t=1}^n\sum_{s=1}^n  \left(a_{j,t}a_{i,s}\varepsilon_{s,t}(w_k)  \right)
 & \text{by \eqref{eq:foxderivative}}\\
 &=\kappa(w_k)_{i,j}, & \text{by~ Lemma~ \ref{lem:quasiMagnus}}
\end{align*}
and this completes the proof.
\end{proof}

\subsection{Cup products for finite presentations}
\label{subsec:cupg}
Let $G$ be a group with a finite presentation 
$\langle \bx \mid \br\rangle$.
By Proposition \ref{prop: basis}, there exists a group $G_e$ with 
echelon presentation $\langle \bx\mid \bw\rangle$, and a map 
$f\colon K_{G_e}\to K_G$  inducing an isomorphism in  cohomology.
Using the bases for $H^*(K_G;\Q)$ transferred from suitable bases 
for $H^*(K_{G_e};\Q)$ as in \S\ref{subsec:transferred basis}, 
we obtain an explicit formula for computing cup-products 
in the rational cohomology of the presentation $2$-complex $K_G$.
 
\begin{theorem}
\label{Thmcupproduct again}
In the aforementioned bases for $H^*(K_G;\Q)$, the cup-product map 
$\mu_{K_G}\colon H^1(K_G;\Q)\wedge H^1(K_G;\Q)\rightarrow 
H^2(K_G;\Q)$ is given by 
\[
u_i \cup u_j=\sum_{k=d+1}^{m}  \kappa(w_k)_{i,j} \beta_k.
\]
That is, $(u_i\cup u_j, \gamma_k)=\kappa(w_k)_{i,j}$, 
for all $1\leq i,j\leq b $.
\end{theorem}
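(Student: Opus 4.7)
The plan is to deduce the formula by transferring Theorem \ref{thm:cup G tilde} across the map $f\colon K_{G_e}\to K_G$ using the naturality of the cup product. Since $f^*\colon H^*(K_G;\Q)\to H^*(K_{G_e};\Q)$ is a ring isomorphism by Proposition \ref{prop: basis}, it suffices to verify the identity after applying $f^*$ and then to use that the transferred bases were designed precisely so this transfer is transparent.

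First I would make explicit how $f^*$ acts on the chosen bases. The restriction of $f$ to $1$-skeleta is the homeomorphism $h$ used in the proof of Proposition \ref{prop: basis}, so $f_*\colon H_1(K_{G_e};\Q)\to H_1(K_G;\Q)$ sends the basis $\by^{(e)}=\{y_1,\dots,y_b\}$ to the basis $\by^{(G)}=\{y_1,\dots,y_b\}$ on the target. Dualizing, $f^*$ sends the basis element $u_i\in H^1(K_G;\Q)$ to the corresponding basis element in $H^1(K_{G_e};\Q)$ (still denoted $u_i$). On degree two, the basis $\{\gamma_{d+1},\dots,\gamma_m\}$ of $H_2(K_G;\Q)$ was defined by $\gamma_k=f_*(e_k^2)$; hence the Kronecker pairing gives $\langle f^*\beta_k,e_l^2\rangle=\langle\beta_k,\gamma_l\rangle=\delta_{kl}$, so $f^*\beta_k$ is precisely the Kronecker dual of $e_k^2$ in $H^2(K_{G_e};\Q)$.

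Now I would apply $f^*$ to both sides of the asserted identity and invoke Theorem \ref{thm:cup G tilde}. By naturality of cup product,
\[
f^*(u_i\cup u_j)=f^*(u_i)\cup f^*(u_j)=u_i^{(e)}\cup u_j^{(e)}\in H^2(K_{G_e};\Q),
\]
and Theorem \ref{thm:cup G tilde} evaluates the right-hand side on the basis $\{e_k^2\}$ as $\kappa(w_k)_{i,j}$. On the other hand,
\[
f^*\!\left(\sum_{k=d+1}^{m}\kappa(w_k)_{i,j}\beta_k\right)=\sum_{k=d+1}^{m}\kappa(w_k)_{i,j}\,f^*\beta_k,
\]
and pairing with $e_l^2$ also yields $\kappa(w_l)_{i,j}$. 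Thus the two classes have the same image under the isomorphism $f^*$, and therefore coincide in $H^2(K_G;\Q)$. The final pairing statement $(u_i\cup u_j,\gamma_k)=\kappa(w_k)_{i,j}$ is then immediate from the definition of $\{\beta_k\}$ as the dual basis to $\{\gamma_k\}$.

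There is no real obstacle here; the only thing to be careful about is bookkeeping the identification of bases across $f^*$, in particular checking that $f^*\beta_k$ is dual to $e_k^2$ (not to some linear combination), which uses only the definition $\gamma_k=f_*(e_k^2)$ and the non-degeneracy of the cohomology pairing on the relevant subspaces. Once this is set up, the theorem reduces at once to the echelon case treated in Theorem \ref{thm:cup G tilde}.
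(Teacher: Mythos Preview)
Your proposal is correct and follows essentially the same route as the paper: both arguments use naturality of the cup product under $f^*$, the identification $f^*(u_i)=u_i$ coming from the $1$-skeleton homeomorphism, the defining relation $\gamma_k=f_*(e_k^2)$, and then reduce to Theorem~\ref{thm:cup G tilde}. The paper's version is simply more compressed, computing the pairing $(u_i\cup u_j,\gamma_k)=(f^*(u_i\cup u_j),e_k^2)=\kappa(w_k)_{i,j}$ directly rather than first transporting the $\beta_k$'s, but the content is the same.
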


\begin{proof}
As in the discussion from \S\ref{subsec:transferred basis}, the elements 
$\gamma_k=f_*(w_{k})$ with $d< k \leq m$ form a basis for $H_2(K_G;\Q)$.   
Hence,
\begin{align*}
(u_i\cup u_j, \gamma_k)=\left(u_i\cup u_j, f_*(e_k^2)\right)  
&=\left(f^*(u_i\cup u_j), e_k^2\right)& \\
&=\left(u_i\cup u_j, e_k^2\right) & \text{since~$f^*(u_i)=u_i$} \\
 &=\kappa(w_k)_{i,j}  & \text{by~Theorem~\ref{thm:cup G tilde}}.
\end{align*}
The claim follows. 
\end{proof}

Let us consider now in more detail the case when the group $G$ is a 
commutator-relators group.  In that case, as noted in \S\ref{subsec:kappa},  
the Magnus expansion $\kappa=\kappa_G$ coincides with the classical 
Magnus expansion $M$.  Furthermore, by Remark \ref{rem:commrel} 
both $H_*(K_G;\Z)$ and $H^*(K_G;\Z)$ are torsion-free, and the 
aforementioned rational bases are also integral bases for these 
free $\Z$-modules.  Moreover, we may take $G_e=G$, and note that 
all the arguments from this section work over $\Z$ in this case.  
Using these observations, and the fact that $M(r_k)_{i,j}=\varepsilon_{ij}(r_k)$, 
we recover as a corollary the following result of Fenn and Sjerve \cite{Fenn-S}.

\begin{corollary}[\cite{Fenn-S}, Thm.~2.4]
\label{cor:fs}
For a commutator-relators group $G=\langle \bx \mid \br\rangle$, 
the cup-product map $\mu_K\colon H^1(K_{G};\Z)\wedge 
H^1(K_{G};\Z)\rightarrow H^2(K_{G};\Z)$ 
is given by
$(u_i\cup u_j, e_k^2)=\varepsilon_{ij}(r_k)$,
for $1\leq i,j\leq n$ and $1\leq k\leq m$.
\end{corollary}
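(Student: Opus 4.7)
The approach is to specialize Theorem \ref{Thmcupproduct again} to the commutator-relators setting, where the general machinery collapses to the classical formulation. I will verify four simplifications in turn and then invoke the theorem.

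First, since $R \subseteq [F,F]$, the augmented Fox Jacobian matrix $(\varepsilon_i(r_k))$ vanishes identically, so the given presentation is already in row-echelon form by Remark \ref{rem:commrel}. Consequently I may take $G_e = G$ with $\bw = \br$, and the map $f\colon K_{G_e}\to K_G$ may be chosen to be the identity. Under this choice, the transferred bases $\{u_i\}$ and $\{\beta_k\}$ coincide with the standard dual bases of $H^1(K_G;\Q)$ and $H^2(K_G;\Q)$, the pivot count is $d=0$, and the summation index $k$ in Theorem \ref{Thmcupproduct again} runs over the full range $1,\dots,m$.

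Second, since all relators lie in $[F,F]$, the abelianization map $\varphi_{\ab}\colon F_{\ab}\to G_{\ab}$ is an isomorphism, and hence the map $\pi$ of \eqref{eq:magnus rational} is the identity after the natural identifications. By Definition \ref{def:kappa}, the relative Magnus expansion $\kappa_G$ therefore coincides with the classical Magnus expansion $M$. Third, because $R\subseteq[F,F]$, the cellular boundary $d_2$ of $C_*(K_G;\Z)$ is zero, so both $H_*(K_G;\Z)$ and $H^*(K_G;\Z)$ are torsion-free; every argument of Sections \ref{sect:ecpres} and \ref{sect:pres} then goes through verbatim with $\Z$-coefficients in place of $\Q$, and the chosen bases are in fact integral bases for the corresponding free $\Z$-modules.

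Finally, for $r\in[F,F]$ the formula \eqref{eq:M2} gives $M_2(r)=\sum_{i<j}\varepsilon_{i,j}(r)(x_ix_j-x_jx_i)$, which is precisely the statement that $M(r)_{i,j}=\varepsilon_{i,j}(r)$ for $i<j$ (with the antisymmetric counterpart for $i>j$). Plugging this into Theorem \ref{Thmcupproduct again}, applied now over $\Z$, yields
\[
(u_i\cup u_j,\, e_k^2)=\kappa(r_k)_{i,j}=M(r_k)_{i,j}=\varepsilon_{i,j}(r_k),
\]
which is the desired formula. There is no substantive obstacle: the only care required is the bookkeeping to confirm that, thanks to the torsion-freeness in the commutator-relators case, the integral statement follows from the rational one without loss.
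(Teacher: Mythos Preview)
Your proposal is correct and follows essentially the same approach as the paper: the paper's own argument (in the paragraph immediately preceding the corollary) likewise notes that for commutator-relators one may take $G_e=G$, that $\kappa=M$, that the (co)homology is torsion-free so the arguments go through over $\Z$, and that $M(r_k)_{i,j}=\varepsilon_{ij}(r_k)$. The only minor difference is that the paper invokes the general identity $M(y)_I=\varepsilon_I(y)$ directly (valid for all $y\in F$ and all $I$, as stated just before \eqref{eq:magnus}), whereas you route through \eqref{eq:M2} and an antisymmetry observation; either way the conclusion is the same.
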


\section{A presentation for the holonomy Lie algebra}
\label{sect:holo lie pres}

In this section, we give presentations for the holonomy Lie algebra 
of a finitely presented group, and for the solvable quotients of this 
Lie algebra.  In the process, we complete the proof of 
Theorem \ref{thm:intro} from the Introduction. 

\subsection{The holonomy Lie algebra of a group}
\label{subsec:holo lie gp}

We start by reviewing the construction of the holonomy Lie algebra 
of a finitely generated group $G$, following 
\cite{Chen73, Kohno, Markl-Papadima, PS-imrn, SW-formality}.   
Set
\begin{equation}
\label{eq:holog}
\h(G)=\Lie(H_1(G;\Q))/\langle\im \mu^{\vee}_G\rangle,
\end{equation}
where $\mu^{\vee}_G$ is the dual to the cup-product map 
$\mu_G\colon H^1(G;\Q)\wedge H^1(G;\Q)\to H^2(G;\Q)$. 
If $\varphi \colon G_1\to G_2$ is a group homomorphism, then 
the induced homomorphism in cohomology, $\varphi^*\colon H^1(G_2;\Q)\to H^1(G_1;\Q)$,  
yields a morphism of graded Lie algebras, $\h(\varphi)\colon \fh(G_1)\to \fh(G_2)$.  
Moreover, if $\varphi$ is surjective, then $\h(\varphi)$ is also surjective. 

In the definition of the holonomy Lie algebra of $G$, we used the 
cohomology ring of a classifying space $K(G,1)$.  More generally, 
if $X$ is a connected space with $b_1(X)<\infty$, we may define 
its holonomy Lie algebra as $\h(X)=\Lie(H_1(X;\Q))/\langle\im \mu^{\vee}_X\rangle$.  
As above, a continuous map $f\colon X\to Y$ induces a morphism 
$\h(f)\colon \h(X)\to \h(Y)$; moreover, if $f\simeq  g$, then $\h(f)=\h(g)$. 
The proof of the next lemma is straightforward. 

\begin{lemma}
\label{lem:holo def}
Let $f\colon X\to Y$ be a map between connected spaces with finite first Betti 
numbers.  Suppose $f$ induces isomorphisms in rational cohomology in degrees 
$1$ and $2$.  Then the map $\h(f)\colon \fh(X)\to \fh(Y)$ is an isomorphism.
\end{lemma}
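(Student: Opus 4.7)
The plan is to reduce the statement to functoriality of the free Lie algebra together with the naturality of the cup product. Since the spaces have finite first Betti number and we work with rational coefficients, the Universal Coefficients theorem lets us translate the hypothesis into the statement that the induced map in rational \emph{homology} is an isomorphism in degrees $1$ and $2$, i.e.\ $f_*\colon H_i(X;\Q)\to H_i(Y;\Q)$ is invertible for $i=1,2$.

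First I would apply the free Lie algebra functor to the degree-one isomorphism to obtain an isomorphism of graded Lie algebras $\Lie(f_*)\colon \Lie(H_1(X;\Q))\isom \Lie(H_1(Y;\Q))$. Next I would use naturality of the cup product, which gives the commutative diagram
\begin{equation*}
\xymatrix{
H^1(Y;\Q)\wedge H^1(Y;\Q) \ar[r]^-{\mu_Y} \ar[d]_{f^*\wedge f^*} & H^2(Y;\Q) \ar[d]^{f^*} \\
H^1(X;\Q)\wedge H^1(X;\Q) \ar[r]^-{\mu_X} & H^2(X;\Q),
}
\end{equation*}
whose $\Q$-linear dual (using the finite-dimensionality of all spaces involved) reads $(f_*\wedge f_*)\circ \mu_X^{\vee}=\mu_Y^{\vee}\circ f_*$. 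Since $f_*$ in degree $2$ is surjective, this identity shows that $(f_*\wedge f_*)(\im\mu_X^{\vee})=\im\mu_Y^{\vee}$.

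Now the isomorphism $\Lie(f_*)$ extends the linear map $f_*\wedge f_*$ on the degree-two piece; hence it carries the Lie ideal $\langle\im\mu_X^{\vee}\rangle$ of $\Lie(H_1(X;\Q))$ onto the Lie ideal $\langle\im\mu_Y^{\vee}\rangle$ of $\Lie(H_1(Y;\Q))$. Passing to the quotients defining $\h(X)$ and $\h(Y)$ yields the required isomorphism $\h(f)\colon \h(X)\isom \h(Y)$.

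This argument is essentially a bookkeeping exercise in functoriality, and the only step with any content is producing the equality $(f_*\wedge f_*)(\im\mu_X^{\vee})=\im\mu_Y^{\vee}$, which would be the main thing to verify carefully; the hypothesis that $f^*$ is an isomorphism (not merely injective or surjective) in degree $2$ is exactly what is needed to secure both inclusions. No further delicate input is required.
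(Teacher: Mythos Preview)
Your argument is correct; the paper itself omits the proof entirely, simply declaring it ``straightforward,'' and your functoriality-plus-naturality verification is exactly the kind of direct check that remark invites. One minor note: your parenthetical ``using the finite-dimensionality of all spaces involved'' is not quite justified, since the lemma does not assume $b_2<\infty$; however, the dual naturality identity $(f_*\wedge f_*)\circ\mu_X^{\vee}=\mu_Y^{\vee}\circ f_*$ holds regardless (only $b_1<\infty$ is needed to identify $(H^1)^*$ with $H_1$, and $\mu^{\vee}$ is well defined as a map out of $H_2$ via the Kronecker pairing), so the proof stands.
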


In definition \eqref{eq:holog}, we may replace the classifying space $K(G,1)$ 
used to compute group cohomology by any other connected 
CW-complex with the same fundamental group. 
The next lemma, which slightly improves on a result from 
\cite{PS-imrn, SW-formality}, makes this more precise.

\begin{lemma}
\label{lem:holo inv}
Let $G$ be a finitely generated group, and 
let $X$ be a connected CW-complex with $\pi_1(X)=G$. 
There is then a natural isomorphism $\h(X) \isom \h(G)$.   
\end{lemma}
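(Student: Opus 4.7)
My proof plan would be to compare $X$ with a $K(G,1)$ space built from $X$ by attaching cells in dimensions $\ge 3$, and then exploit naturality of the cup product together with the fact that the holonomy Lie algebra depends only on $\ker\mu$ (equivalently, on $\operatorname{im}\mu^\vee$).

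First, I would invoke the standard construction: starting from the connected CW-complex $X$ with $\pi_1(X)=G$, attach cells of dimensions $\ge 3$ to kill $\pi_i$ for $i\ge 2$, producing a CW-complex $Y$ of type $K(G,1)$ that contains $X$ as a subcomplex. The inclusion $i\colon X\hookrightarrow Y$ serves as a representative of the classifying map, and any two such choices are homotopic, which will give the naturality statement at the end.

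The next step is to understand the map $i$ on (co)homology with $\Q$-coefficients. Since $Y$ is obtained from $X$ by attaching cells of dimension $\ge 3$, the cellular chain complexes agree in degrees $\le 1$ and $i_*\colon H_1(X;\Q)\to H_1(Y;\Q)$ is an isomorphism, while $i_*\colon H_2(X;\Q)\twoheadrightarrow H_2(Y;\Q)$ is surjective (its kernel being generated by boundaries of attached $3$-cells). Dually, $i^*\colon H^1(Y;\Q)\xrightarrow{\cong} H^1(X;\Q)$ is an isomorphism and $i^*\colon H^2(Y;\Q)\hookrightarrow H^2(X;\Q)$ is injective.

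The key step is naturality of the cup product: $i^*\circ \mu_Y=\mu_X\circ (i^*\wedge i^*)$. Because $i^*$ is an isomorphism on $H^1$ and injective on $H^2$, this identity gives $\ker\mu_Y=\ker\mu_X$ after identifying $H^1(Y;\Q)\cong H^1(X;\Q)$. Equivalently, passing to duals, $(i_*\wedge i_*)\circ \mu_X^\vee=\mu_Y^\vee\circ i_*$, and since $i_*$ on $H_1$ is an isomorphism and $i_*$ on $H_2$ is surjective, one reads off $\operatorname{im}\mu_X^\vee=\operatorname{im}\mu_Y^\vee$ as subspaces of $H_1(G;\Q)\wedge H_1(G;\Q)$. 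Consequently, the induced Lie algebra map $\h(i)\colon \h(X)\to\h(Y)=\h(G)$ is an isomorphism, and this isomorphism is natural in $X$ because the classifying map is unique up to homotopy (and $\h(-)$ is homotopy-invariant, as already noted in the excerpt).

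The only mild subtlety — which I view as the main obstacle — is that Lemma \ref{lem:holo def} as stated requires isomorphisms on both $H^1$ and $H^2$, whereas here $i^*$ is merely injective on $H^2$. So one cannot quote that lemma as a black box; instead one must observe directly that the holonomy Lie algebra depends only on the \emph{kernel} of $\mu$ (equivalently, the image of its dual), and that an injection on $H^2$ is enough to force equality of these kernels under naturality. Once this observation is in hand, the rest of the argument is formal.
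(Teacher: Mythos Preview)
Your proposal is correct and follows essentially the same approach as the paper: build a $K(G,1)$ from $X$ by attaching cells in dimensions $\ge 3$, use that the inclusion induces an isomorphism on $H^1$ and an injection on $H^2$, and deduce (via naturality of cup products) that $\im\mu_X^\vee=\im\mu_G^\vee$, with naturality coming from uniqueness of classifying maps up to homotopy. Your observation that Lemma~\ref{lem:holo def} cannot be invoked directly is exactly right, and the paper handles this the same way you do, by working with the images of $\mu^\vee$ rather than with $H^2$ itself.
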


\begin{proof}
We may construct a classifying space for the group $G$ by attaching cells of 
dimension $3$ and higher to the space $X$. The inclusion 
map, $j=j_X\colon X\to K(G,1)$, induces a map on cohomology rings, 
$j^*\colon H^*(K(G,1);\Q)\to H^*(X;\Q)$, which is an isomorphism 
in degree $1$ and an injection in degree $2$. In particular, $b_1(X)=b_1(G)<\infty$. 
Furthermore, $j^2$ restricts to an isomorphism from 
$\im(\mu_G)$ to $\im(\mu_{X})$.  Taking duals, we obtain 
the following diagram. 
\begin{equation}
\label{eq:partial}
\begin{gathered}
\xymatrix{
H_2(X;\Q)\ar@{->>}[d]^{j_*}\ar@/^18pt/[rr]^{\mu^{\vee}_X} \ar@{->>}[r] 
& \im(\mu^{\vee}_X)\ar[d]_{\cong}^{j_*}\ar@{^(->}[r] & H_1(X;\Q)\otimes H_1(X;\Q)\ar[d]_{\cong}^{j_*\otimes j_*} \\
H_2(G;\Q) \ar@{->>}[r]\ar@/_22pt/[rr]^{\quad\mu^{\vee}_G} & \im(\mu^{\vee}_G)\ar@{^(->}[r] & H_1(G;\Q)\otimes H_1(G;\Q)
}
\end{gathered}
\end{equation}
\vskip4pt

Hence, the isomorphism $j_*\otimes j_*$ identifies 
$\im(\mu^{\vee}_{X})$ with $\im(\mu^{\vee}_G)$.  Thus, the extension 
to free Lie algebras of the isomorphism $j_*\colon H_1(X;\Q)\to H_1(G;\Q)$ 
factors through an isomorphism $\h(j)\colon \h(X)\to \h(G)$.  

To show that this isomorphism is natural, let $f\colon X\to Y$ be a map of 
pointed, connected CW-complexes with finitely generated fundamental 
groups, and let $g\colon K(\pi_1(X),1)\to K(\pi_1(Y),1)$ 
be the map (unique up to homotopy) induced 
by the homomorphism $f_{\#}\colon \pi_1(X)\to \pi_1(Y)$.  
Then $g \circ j_X\simeq  j_Y\circ f$, and thus 
$\h(g) \circ \h(j_X)=  \h(j_Y) \circ\h(f)$.
\end{proof}

Putting together the previous two lemmas, we obtain the following corollary.

\begin{corollary}
\label{cor:holo map}
Let $G_1$ and $G_2$ be two finitely generated groups, with 
presentation $2$-complexes $K_1$ and $K_2$. 
Let $f\colon K_1\to K_2$ be a cellular map, and let 
$\varphi=f_{\sharp}\colon G_1\to G_2$ be the induced 
homomorphism. 
If $f^*\colon H^*(K_{2},\Q) \to H^*(K_{1},\Q)$ is an isomorphism,    
then $\h(\varphi)\colon \fh(G_1)\to \fh(G_2)$ 
is also an isomorphism.
\end{corollary}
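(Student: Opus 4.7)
The plan is to extract this corollary as a direct composition of Lemmas \ref{lem:holo def} and \ref{lem:holo inv}, the only work being to verify that the relevant naturality square commutes and that the hypothesis of Lemma \ref{lem:holo def} is met.

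First I would observe that, since $K_1$ and $K_2$ are $2$-dimensional CW-complexes, their rational cohomology is concentrated in degrees $0$, $1$, and $2$. The hypothesis that $f^*\colon H^*(K_2;\Q)\to H^*(K_1;\Q)$ is an isomorphism therefore implies, in particular, that $f$ induces isomorphisms in rational cohomology in degrees $1$ and $2$. By Lemma \ref{lem:holo def}, this gives that the induced map on holonomy Lie algebras of spaces,
\[
\h(f)\colon \h(K_1)\longrightarrow \h(K_2),
\]
is an isomorphism.

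Next, I would apply Lemma \ref{lem:holo inv} to each of the presentation $2$-complexes to get natural isomorphisms $\h(j_{K_i})\colon \h(K_i) \isom \h(G_i)$ for $i=1,2$, where $j_{K_i}\colon K_i\to K(G_i,1)$ is the canonical inclusion obtained by attaching cells of dimension $\geq 3$. The naturality statement in Lemma \ref{lem:holo inv}, applied to the cellular map $f\colon K_1\to K_2$ and its induced homomorphism $\varphi=f_{\sharp}$, yields a commuting square
\[
\xymatrix{
\h(K_1) \ar[r]^-{\h(f)} \ar[d]_{\h(j_{K_1})}^{\cong} & \h(K_2) \ar[d]^{\h(j_{K_2})}_{\cong} \\
\h(G_1) \ar[r]^-{\h(\varphi)} & \h(G_2).
}
\]
Since the vertical arrows are isomorphisms and the top arrow is an isomorphism by the first step, it follows that $\h(\varphi)\colon \h(G_1)\to \h(G_2)$ is an isomorphism as well.

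There is no real obstacle here, as this corollary is almost tautological once the two preceding lemmas are in hand; the only point that requires a moment of care is the verification that the naturality square above genuinely commutes, which in turn reduces to the identity $g\circ j_{K_1}\simeq j_{K_2}\circ f$ used at the end of the proof of Lemma \ref{lem:holo inv}, together with the functoriality of $\h(-)$ on spaces up to homotopy.
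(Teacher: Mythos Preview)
Your argument is correct and follows exactly the approach the paper indicates: the corollary is obtained by putting together Lemmas \ref{lem:holo def} and \ref{lem:holo inv}, and you have supplied precisely the commutative diagram and naturality check that make this combination work.
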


Next, we show that, if need be, the group $G$ we started with may 
be replaced by a finitely presented group with the same holonomy Lie algebra.

\begin{prop}
\label{prop:holo fp}
Let $G$ be a finitely generated group. There exists then a 
finitely presented group $G_f$ and a homomorphism 
$G_f \to G$ inducing an isomorphism $\h(G_f)\isom \h(G)$. 
\end{prop}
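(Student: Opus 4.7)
The plan is to realize $G$ as a quotient of a finitely generated free group by a (possibly infinite) set of relators, and then keep only enough of those relators for the holonomy data to be preserved. Fix a surjection $\varphi\colon F\twoheadrightarrow G$ from a finitely generated free group $F$ together with a normal generating set $\{r_\alpha\}_{\alpha\in A}$ (possibly infinite) for $\ker\varphi$, and let $K_\infty$ be the presentation $2$-complex with one $2$-cell $e^2_\alpha$ per $r_\alpha$, so that $\pi_1(K_\infty)=G$. For each finite subset $S\subset A$, let $K_S\subset K_\infty$ be the finite $2$-subcomplex carrying only the cells indexed by $S$; then $G_S:=\pi_1(K_S)$ is finitely presented, and the cellular inclusion $K_S\hookrightarrow K_\infty$ induces a surjective homomorphism $\rho_S\colon G_S\twoheadrightarrow G$. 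By Lemma~\ref{lem:holo inv} there are natural isomorphisms $\h(K_S)\cong \h(G_S)$ and $\h(K_\infty)\cong \h(G)$, so it suffices to choose $S$ so that the inclusion-induced map $\h(K_S)\to\h(K_\infty)$ is an isomorphism; the desired $G_f$ will then be $G_S$.

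I would then make two finiteness observations. Firstly, $H_1(K_\infty;\Q)$ is the quotient of the finite-dimensional vector space $F_{\Q}$ by the $\Q$-span of the abelianizations $\bar r_\alpha$, and such a span is already generated by finitely many of these classes; including those finitely many indices in $S$ forces $H_1(K_S;\Q)\to H_1(K_\infty;\Q)$ to be an isomorphism. Secondly, the image of the dual cup-product map $\mu^\vee_{K_\infty}\colon H_2(K_\infty;\Q)\to \bigwedge^2 H_1(K_\infty;\Q)$ sits inside the finite-dimensional space $\bigwedge^2 H_1(K_\infty;\Q)$, hence is itself finite-dimensional. I would therefore pick finitely many classes $\gamma_1,\dots,\gamma_s\in H_2(K_\infty;\Q)$ whose images generate $\im\mu^\vee_{K_\infty}$, observe that each $\gamma_j$ is a finite $\Q$-linear combination of $2$-cells $e^2_\alpha$, and further enlarge $S$ to contain every index $\alpha$ occurring with nonzero coefficient in any $\gamma_j$.

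For such an $S$, each $\gamma_j$ lies in $C_2(K_S;\Q)$ and is still a cycle there, so $\im\mu^\vee_{K_S}\supseteq \im\mu^\vee_{K_\infty}$ inside $\bigwedge^2 H_1(K_\infty;\Q)$; the reverse containment is the functoriality square for $\mu^\vee$ along $K_S\hookrightarrow K_\infty$, giving equality. Combined with the isomorphism on $H_1$, the definition \eqref{eq:holog} of the holonomy Lie algebra immediately shows that the natural map $\h(K_S)\to\h(K_\infty)$ is an isomorphism of graded Lie algebras, and the naturality clause of Lemma~\ref{lem:holo inv} transports this to the sought isomorphism $\h(\rho_S)\colon \h(G_S)\isom \h(G)$.

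The main obstacle is the inclusion $\im\mu^\vee_{K_S}\supseteq \im\mu^\vee_{K_\infty}$; the stabilization of $H_1$ and the reverse containment are formal consequences of naturality and the Noetherian property of $F_{\Q}$. What makes the key inclusion work is the combination of two facts worth emphasizing: $\im\mu^\vee_{K_\infty}$ is finite-dimensional precisely because $G$ is finitely generated, and any finite collection of representing $2$-cycles involves only finitely many $2$-cells, so all of $\im\mu^\vee_{K_\infty}$ can be witnessed on a finite subcomplex $K_S$.
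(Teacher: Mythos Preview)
Your argument is correct and follows essentially the same route as the paper: both proofs find a finite subcomplex of a CW-model for $G$ on which $H_1(-;\Q)$ and $\im(\mu^\vee)$ already agree with those of the full complex, and then invoke Lemma~\ref{lem:holo inv}. The only difference is cosmetic: the paper outsources the existence of such a finite subcomplex to \cite[Prop.~4.1]{PS-mrl}, whereas you supply that argument directly (and in the convenient setting of a presentation $2$-complex), making your proof self-contained.
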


\begin{proof}
Let $X$ be a connected CW-complex with $\pi_1(X)=G$.  
Since $G$ is finitely generated, we may assume $X$ has 
finitely many $1$-cells.  
The proof of Proposition 4.1 from \cite{PS-mrl} shows that 
there exists a connected, finite subcomplex $Z$ of $X$ such that 
the inclusion $Z\to X$ induces isomorphisms 
$H_1(Z;\Q)\cong H_1(X;\Q)$ and $\im(\mu^{\vee}_Z)\cong \im(\mu^{\vee}_X)$.
Consequently, $\fh(Z)\cong \fh(X)$. 
Letting $G_f=\pi_1(Z)$, the claim follows 
from Lemma \ref{lem:holo inv}.
\end{proof}

\subsection{Magnus expansion and holonomy}
\label{subsec:mpres}
Let $G$ be a group admitting a finite presentation, $P=\langle \bx\mid \br\rangle$. 
As shown in Proposition \ref{prop: basis}, there exists a group $G_e$ with 
echelon presentation $P_e=\langle \bx\mid \bw\rangle$, and a map 
$f\colon K_{G_e}\to K_G$  inducing an isomorphism in  cohomology 
and an epimorphism $\rho\colon G_e\surj G$ on 
fundamental groups.  By Corollary \ref{cor:holo map}, the induced map 
between the respective holonomy Lie algebras, $\fh(\rho)\colon \fh(G_e)\isom \fh(G)$, 
is an isomorphism.  

So let us consider a group $G=F/R$ admitting an echelon presentation 
$P=\langle \bx\mid \bw\rangle$, where $\bx=\{x_1,\dots, x_n\}$ and 
$\bw=\{w_1,\dots, w_m\}$.   We now give a more explicit presentation 
for the holonomy Lie algebra $\fh(G)$.  

Let $\partial_i(w_k)\in \Z{F}$ be the Fox derivatives of the relations,   
and let $\varepsilon_i(w_k)=\varepsilon( \partial_i(w_k) )\in \Z$ be their augmentations.  
Recall from \S \ref{subsec:cup gtilde} that we can choose a basis 
$\by=\{y_1,\dots,y_b\}$ for $H_1(K_P;\Q)$ and a basis 
$\{e^2_{d+1},\dots, e^2_{m}\}$ for  $H_2(K_P;\Q)$, where $d$ 
is the rank of Jacobian matrix $J_P=(\varepsilon_i(w_k))$, viewed as 
an $m\times n$ matrix over $\Q$. 
Let $\Lie(\by)$ be the free Lie algebra 
over $\Q$ generated by $\by$ in degree $1$.
Recall that $\kappa_2$ is the degree $2$ part of the Magnus expansion of $G$
given explicitly in \eqref{eq:kappa2}. Thus, we can identify 
$\kappa_2(w_k)$ 
with $\sum_{i<j} \kappa(w_k)_{i,j}[y_i,y_j]$ in 
$\Lie(\by)$ for $d+1\leq k\leq m$. 
 
\begin{theorem}
\label{ThmholonomyLie}
Let $G$ be a group admitting an echelon presentation 
$P=\langle \bx \mid \bw\rangle$. 
Then there exists an isomorphism of graded Lie algebras
\[
\xymatrixcolsep{18pt}
\xymatrix{
\fh(G) \ar^(.2){\cong}[r] & \left.\Lie(\by)
\middle/{\rm ideal} (\kappa_2(w_{d+1}),\dots,\kappa_2(w_{m}))\right.} .
\]
\end{theorem}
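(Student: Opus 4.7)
The plan is to trace through the definitions, identifying the holonomy Lie algebra of $G$ with that of the presentation $2$-complex $K_G$, and then applying the cup-product formula of Theorem \ref{thm:cup G tilde} to describe the generators of the defining ideal.

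First, since $K_G$ is a connected CW-complex with $\pi_1(K_G)=G$, Lemma \ref{lem:holo inv} provides a natural isomorphism $\fh(K_G)\cong\fh(G)$; hence it suffices to present $\fh(K_G)$. Unwinding the definition, $\fh(K_G)=\Lie(H_1(K_G;\Q))/\langle\im\mu^{\vee}_{K_G}\rangle$, where $\mu^{\vee}_{K_G}\colon H_2(K_G;\Q)\to H_1(K_G;\Q)\wedge H_1(K_G;\Q)$ is dual to the cup product. The basis choices from \S\ref{subsec:echelon} identify $\Lie(H_1(K_G;\Q))$ with $\Lie(\by)$ and give $\{e^2_{d+1},\dots,e^2_m\}$ as a basis of $H_2(K_G;\Q)$; note also that for $k\ge d+1$ the $k$-th row of the augmented Jacobian vanishes, so $w_k\in[F,F]$ and hence $\kappa_2(w_k)$ lives naturally in $\Lie(\by)$ by \eqref{eq:kappa2}.

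The next step is to identify $\im\mu^{\vee}_{K_G}$ explicitly. By duality, for each $d+1\le k\le m$ and each pair $i<j$, we have
\[
\langle u_i\wedge u_j,\mu^{\vee}_{K_G}(e^2_k)\rangle \;=\; \langle u_i\cup u_j,e^2_k\rangle \;=\; \kappa(w_k)_{i,j},
\]
the last equality being Theorem \ref{thm:cup G tilde}. Hence
\[
\mu^{\vee}_{K_G}(e^2_k) \;=\; \sum_{i<j}\kappa(w_k)_{i,j}\,y_i\wedge y_j.
\]
Under the standard identification of $\Lie(\by)_2$ with $H_1(K_G;\Q)\wedge H_1(K_G;\Q)$ sending $[y_i,y_j]\mapsto y_i\wedge y_j$, this right-hand side corresponds to $\sum_{i<j}\kappa(w_k)_{i,j}[y_i,y_j]$, which by \eqref{eq:kappa2} is precisely $\kappa_2(w_k)$ viewed inside $\Lie(\by)\subset T(G_\Q)$. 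Thus $\im\mu^{\vee}_{K_G}$ equals the $\Q$-span of $\kappa_2(w_{d+1}),\dots,\kappa_2(w_m)$, and the two-sided Lie ideal it generates in the free Lie algebra $\Lie(\by)$ is exactly the ideal on the right-hand side of the claim.

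The argument is essentially formal once Theorem \ref{thm:cup G tilde} and Lemma \ref{lem:holo inv} are in hand; the main subtlety is bookkeeping the passage among three parallel descriptions of the degree-$2$ relation space—the exterior square in which $\mu^{\vee}_{K_G}$ naturally takes values, the degree-$2$ piece of the free Lie algebra $\Lie(\by)$ defining $\fh(G)$, and the antisymmetric subspace of $T(G_\Q)_2$ in which $\kappa_2(w_k)$ is recorded. Verifying coefficient-by-coefficient that these three descriptions agree under the standard identification $y_i\wedge y_j\leftrightarrow[y_i,y_j]\leftrightarrow y_iy_j-y_jy_i$ is the only point that requires care, and it follows at once from \eqref{eq:kappa2} together with Lemma \ref{lem:symmetry}, which guarantees that the coefficients $\kappa(w_k)_{i,j}$ are antisymmetric in $i,j$.
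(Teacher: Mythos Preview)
Your proof is correct and follows essentially the same approach as the paper's own proof: both dualize the cup-product formula of Theorem~\ref{thm:cup G tilde} to obtain $\mu^{\vee}_{K_G}(e^2_k)=\sum_{i<j}\kappa(w_k)_{i,j}\,y_i\wedge y_j$, identify this with $\kappa_2(w_k)\in\Lie(\by)_2$ via \eqref{eq:kappa2}, and conclude using $\fh(G)\cong\fh(K_G)$. Your write-up is in fact a bit more explicit than the paper's at two points---you spell out why $w_k\in[F,F]$ for $k>d$ (so that \eqref{eq:kappa2} applies), and you are careful about the three parallel descriptions of the degree-$2$ relation space---but the underlying argument is the same.
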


\begin{proof}
Combining Theorem \ref{thm:cup G tilde} with the fact that 
$(u_i\wedge u_j, \mu^{\vee}(e^2_k))=(\mu(u_i\wedge u_j),e^2_k)$, we see that 
the dual cup-product map, $\mu^{\vee}\colon H_2(K_P;\Q)\rightarrow 
H_1(K_P;\Q)\wedge H_1(K_P;\Q)$, is given by
\begin{equation}
\label{eq:cup star}
\mu^{\vee}(e^2_k)=\sum_{1\leq i<j\leq b}\kappa(w_k)_{i,j} (y_i\wedge y_j) .
\end{equation}
Hence, the following diagram commutes,
\begin{equation}
\label{eq:cd h2}
\begin{gathered}
\xymatrix{
H_2(K_P;\Q)\ar[r]^(.35){\mu^{\vee}}\ar@{^(->}[d]& H_1(K_P;\Q)\wedge H_1(K_P;\Q)\ar@{^(->}[d]\:\\
C_2(K_P;\Q)\ar[r]^(.35){\kappa_2} & H_1(K_P;\Q)\otimes H_1(K_P;\Q)\, .
}
\end{gathered}
\end{equation}

Using now the identification of $\kappa_2(w_k)$ and 
$\sum_{i<j} \kappa(w_k)_{i,j}[y_i,y_j]$ as elements of $\Lie(\by)$,
the definition of the holonomy Lie algebra, 
and the fact that $\fh(G)\cong \fh(K_P)$, we 
arrive at the desired conclusion.
\end{proof}

\begin{corollary}
\label{cor:Uh}
The universal enveloping algebra of $\fh(G)$ has presentation
\[
U(\fh(G))=\Q\langle \by \rangle/\ideal(\kappa_2(w_{n-b+1}),\dots, \kappa_2(w_m)).
\]
\end{corollary}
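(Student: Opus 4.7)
The plan is to deduce the corollary from Theorem \ref{ThmholonomyLie} using two standard facts about universal enveloping algebras:
\begin{enumerate}
\item[(i)] For a vector space $V$, the universal enveloping algebra of the free Lie algebra $\Lie(V)$ is the tensor algebra $T(V)$ (with the standard embedding sending the Lie bracket $[v,w]$ to the commutator $vw-wv$).
\item[(ii)] If $\fg=\fL/\fr$ is a Lie algebra presented as the quotient of a Lie algebra $\fL$ by a Lie ideal $\fr$, then $U(\fg)\cong U(\fL)/(\fr)$, where $(\fr)$ denotes the two-sided associative ideal of $U(\fL)$ generated by (the image of) $\fr$.
\end{enumerate}
Both facts are classical; in particular, (ii) follows from the universal property of $U$ and the fact that $U$ is right-exact as a functor from Lie algebras to associative algebras.

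Granting these, the argument is short. By Theorem \ref{ThmholonomyLie}, we have
\[
\fh(G)\cong \Lie(\by)\big/\ideal_{\Lie}\bigl(\kappa_2(w_{n-b+1}),\dots,\kappa_2(w_m)\bigr),
\]
using that $d=n-b$. Applying (i) to $\fL=\Lie(\by)$ identifies $U(\fL)$ with $T(\by)=\Q\langle\by\rangle$. Applying (ii) then yields
\[
U(\fh(G))\cong \Q\langle \by\rangle\big/\bigl(\kappa_2(w_{n-b+1}),\dots,\kappa_2(w_m)\bigr),
\]
where the two-sided ideal on the right is generated by the image in $T(\by)$ of the Lie ideal.

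The one point that requires a brief verification is that viewing the generators $\kappa_2(w_k)$ inside the free Lie algebra or inside the tensor algebra yields the same two-sided ideal. This is immediate from the compatibility of presentations: under the embedding $\Lie(\by)\hookrightarrow T(\by)$, the element $\kappa_2(w_k)=\sum_{i<j}\kappa(w_k)_{i,j}[y_i,y_j]$ of $\Lie(\by)$ maps precisely to $\sum_{i<j}\kappa(w_k)_{i,j}(y_iy_j-y_jy_i)$, which is the degree-two primitive element already denoted $\kappa_2(w_k)$ in \eqref{eq:kappa2}. Since the two-sided associative ideal generated by a Lie ideal $\fr$ coincides with the two-sided associative ideal generated by any Lie-generating set of $\fr$, the presentation stated in the corollary follows at once.

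There is no real obstacle here; the statement is a formal consequence of Theorem \ref{ThmholonomyLie} together with the functoriality of $U$. The only care needed is the bookkeeping that the commutator relations $y_iy_j-y_jy_i$ play the same role on both sides of the identification $U(\Lie(\by))=T(\by)$.
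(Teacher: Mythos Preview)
Your proof is correct and matches the paper's intent: the paper states this corollary immediately after Theorem~\ref{ThmholonomyLie} with no proof, treating it as a direct consequence, and your argument supplies exactly the standard justification via $U(\Lie(\by))\cong T(\by)$ and the right-exactness of $U$. The bookkeeping you do (identifying $d=n-b$ and checking that $\kappa_2(w_k)$ denotes the same element whether read in $\Lie(\by)$ or in $\Q\langle\by\rangle$) is precisely what is needed.
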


If $G=\langle \bx \mid \br\rangle$ is a commutator-relators group, then
the group $H_1(K_{G};\Z)$ is torsion-free, and thus 
the integer holonomy Lie ring $\fh(G;\Z)$ can be defined as in \eqref{eq:holog}, 
using integer (co)homology, instead, see \cite{Markl-Papadima, PS-imrn} for details.  
Furthermore, as in \S\ref{subsec:magnus}, for each $r\in [F,F]$, the primitive 
element $M_2(r)\in \widehat{T}_2(F_{\ab})$ corresponds to the element  
$\sum_{i<j} \varepsilon_{i,j}(r)[x_i,x_j]$ from the degree~$2$ piece of the free 
Lie ring $\Lie_{\Z}(\bx)$.  Using this observation and Corollary \ref{cor:fs}, 
we recover a result from \cite{PS-imrn}.
 
\begin{corollary}[\cite{PS-imrn}, Prop.~7.2]
\label{Cor:holo commrel} 
If $G=\langle x_1,\dots ,x_n \mid \br\rangle$ is a commutator-relators group, then  
\[
\fh(G;\Z)=\Lie_{\Z}(\bx)/{\rm ideal}\bigg\{\sum_{1\le i<j\le n} \varepsilon_{i,j}(r)[x_i,x_j]\: 
\big| \:r\in \br \big.\bigg\}\, .
 \]
\end{corollary}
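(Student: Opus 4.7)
The plan is to dualize Fenn and Sjerve's cup-product formula (Corollary \ref{cor:fs}) and feed the result into the definition of the holonomy Lie ring, taking advantage of the fact that everything in sight is torsion-free. First, I would invoke Remark \ref{rem:commrel}: because $G$ is a commutator-relators group, the augmented Fox Jacobian vanishes, so $P$ is already in echelon form with $d=0$, and $H_\ast(K_G;\Z)$, $H^\ast(K_G;\Z)$ are free abelian. Explicitly, $H_1(K_G;\Z)$ has basis $\{e^1_i\}_{1\le i\le n}$ dual to $\{u_i\}$, and $H_2(K_G;\Z)$ has basis $\{e^2_k\}_{1\le k\le m}$. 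This is what lets us define $\h(G;\Z)=\Lie_\Z(H_1(G;\Z))/\ideal(\im \mu_G^\vee)$ over $\Z$ without inverting anything.

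Next, Corollary \ref{cor:fs} gives $(u_i\cup u_j,\,e^2_k)=\varepsilon_{i,j}(r_k)$, so transposing yields
\[
\mu_{K_G}^\vee(e^2_k)\;=\;\sum_{1\le i<j\le n}\varepsilon_{i,j}(r_k)\,(x_i\wedge x_j)\;\in\; H_1(K_G;\Z)\wedge H_1(K_G;\Z),
\]
which is exactly the integral analogue of formula \eqref{eq:cup star}. I would then compose with the canonical embedding $H_1(K_G;\Z)\wedge H_1(K_G;\Z)\hookrightarrow \Lie_\Z(\bx)_2$ sending $x_i\wedge x_j\mapsto [x_i,x_j]$. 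Under this identification, $\mu_{K_G}^\vee(e^2_k)$ becomes the degree-$2$ part $M_2(r_k)=\sum_{i<j}\varepsilon_{i,j}(r_k)[x_i,x_j]$ of the classical Magnus expansion, which is precisely the object appearing in the target formula (the paragraph preceding the corollary already records that $M_2(r)$ lives in $\Lie_\Z(\bx)_2$ via this identification).

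Finally, I would apply the integral analogue of Lemma \ref{lem:holo inv} to identify $\h(G;\Z)$ with $\Lie_\Z(\bx)/\ideal(\im \mu_{K_G}^\vee)$, yielding the stated presentation. The only step requiring a bit of care — and hence the main potential obstacle — is justifying this integral analogue: one must check that diagram \eqref{eq:partial} still makes sense over $\Z$. But in the commutator-relators case the inclusion $j\colon K_G\hookrightarrow K(G,1)$ induces an isomorphism on $H_1$ and an injection on $H_2$ with torsion-free source and target, so the argument of Lemma \ref{lem:holo inv} transfers verbatim from $\Q$ to $\Z$. Once that is granted, the corollary is pure symbol-matching.
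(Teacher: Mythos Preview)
Your proposal is correct and follows essentially the same route as the paper: dualize the Fenn--Sjerve cup-product formula (Corollary~\ref{cor:fs}), identify the resulting classes with $M_2(r_k)=\sum_{i<j}\varepsilon_{i,j}(r_k)[x_i,x_j]\in\Lie_{\Z}(\bx)_2$, and plug into the definition of the integral holonomy Lie ring. The paper compresses this into one sentence and defers the well-definedness of $\fh(G;\Z)$ and the identification $\fh(G;\Z)\cong\fh(K_G;\Z)$ to the references \cite{Markl-Papadima, PS-imrn}, whereas you spell out why the argument of Lemma~\ref{lem:holo inv} carries over to $\Z$; your extra care there is warranted and the justification is sound.
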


\begin{prop}
\label{prop:univ quad}
For every quadratic Lie algebra $\fg$ over $\Q$, there exists a 
commutator-relators group $G_c$ such that $\fh(G_c)= \fg$.
\end{prop}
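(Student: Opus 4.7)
My plan is to exploit Corollary \ref{Cor:holo commrel}, which realizes the holonomy Lie algebra of a commutator-relators group as a quotient of $\Lie_\Z(\bx)$ by a quadratic ideal whose generators are read off from the second Fox derivatives of the relators. The task then reduces to producing, for each prescribed quadratic element of $\fg$, a commutator word in the free group whose quadratic Magnus expansion realizes it.

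Write $\fg = \Lie(V)/\ideal(\rho_1, \ldots, \rho_m)$, where $V$ is a finite-dimensional $\Q$-vector space with basis $\{y_1, \ldots, y_n\}$ and each $\rho_k$ lies in the degree-$2$ piece of $\Lie(V)$. Since ideals in a rational Lie algebra are unchanged under nonzero rational rescaling of their generators, after clearing denominators we may assume that $\rho_k = \sum_{i<j} c_{k,i,j}[y_i, y_j]$ with all $c_{k,i,j} \in \Z$.

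Let $F$ be the free group on $x_1, \ldots, x_n$, and for each $k$ set $r_k := \prod_{i<j} [x_i, x_j]^{c_{k,i,j}}$, with the product taken in any fixed order over the pairs $i<j$. Every $r_k$ lies in $[F,F]$, so $G_c := \langle x_1, \ldots, x_n \mid r_1, \ldots, r_m\rangle$ is a commutator-relators group. A direct expansion from \eqref{eq:agnus} yields $M_2([x_i, x_j]) = x_i x_j - x_j x_i$. Each factor $[x_i, x_j]^{c_{k,i,j}}$ lies in $\Gamma_2 F$, so by Lemma \ref{lem:symmetry} its degree-$1$ Magnus coefficients vanish; applying Lemma \ref{lemmasum} with $G = F$ (so that $\kappa$ coincides with the classical Magnus expansion $M$) inductively across the product defining $r_k$, we obtain $M_2(r_k) = \sum_{i<j} c_{k,i,j}(x_i x_j - x_j x_i)$, and hence $\varepsilon_{i,j}(r_k) = c_{k,i,j}$ for all $i<j$.

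Corollary \ref{Cor:holo commrel} then presents the integer holonomy Lie ring of $G_c$ as $\fh(G_c;\Z) = \Lie_\Z(\bx)/\ideal(\sum_{i<j} c_{k,i,j}[x_i, x_j] : 1\le k \le m)$; tensoring with $\Q$ and identifying $x_i$ with $y_i$ yields the desired isomorphism $\fh(G_c) \cong \fg$. There is no substantive obstacle in this argument; the only mildly delicate step is the initial passage from rational to integer coefficients, which is harmless because the ideals under consideration are $\Q$-subspaces.
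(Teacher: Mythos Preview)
Your proof is correct and follows essentially the same approach as the paper: write $\fg$ as a quotient of a free Lie algebra by quadratic relations with integer coefficients, realize each relation by a product of commutator powers $r_k=\prod_{i<j}[x_i,x_j]^{c_{k,i,j}}$, and invoke Corollary~\ref{Cor:holo commrel}. The only difference is that you spell out, via Lemmas~\ref{lem:symmetry} and~\ref{lemmasum}, why $\varepsilon_{i,j}(r_k)=c_{k,i,j}$, whereas the paper asserts this directly.
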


\begin{proof}
We use an approach similar to the proof of \cite[Prop.~6.2]{PS-crelle}. 
By assumption, we may write $\fg=\Lie(\bx)/\mathfrak{a}$,  
where $\bx=\{x_1,\dots, x_n\}$ and $\mathfrak{a}$ is an ideal generated by 
elements of the form $\ell_k=\sum_{1\leq i<j\leq n} c_{ijk}[x_i,x_j]$ for $1\leq k\leq m$, 
and where the coefficients $c_{ijk}$ are in $\Q$.  
Clearing denominators, we may assume all $c_{ijk}$ are integers. 
We can then define words $r_k=\prod_{1\leq i<j\leq n}[x_i,x_j]^{c_{ijk}}$ 
in the free group generated by $\bx$, and set 
$G_c=\langle \bx \mid r_1,\dots,r_m\rangle$. Clearly, $\varepsilon_{i,j}(r_k)=c_{ijk}$. 
The desired conclusion follows from Corollary \ref{Cor:holo commrel}.
\end{proof}

\begin{corollary}
\label{cor:gcomm}
For every finitely generated group $G$, there exists a 
commutator-relators group $G_c$ such that $\fh(G_c)= \fh(G)$.
\end{corollary}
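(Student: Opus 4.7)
The plan is to observe that $\fh(G)$ is, essentially by definition, a finitely presented quadratic Lie algebra over $\Q$, and then to invoke Proposition \ref{prop:univ quad} directly with $\fg = \fh(G)$.

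First I would unpack the defining formula \eqref{eq:holog}: we have $\fh(G) = \Lie(H_1(G;\Q))/\langle \im \mu_G^\vee \rangle$. Since $G$ is finitely generated, $H_1(G;\Q)$ is a finite-dimensional $\Q$-vector space, so the free Lie algebra $\Lie(H_1(G;\Q))$ is generated in degree $1$ by finitely many elements. Moreover, the image of the dual cup-product map $\mu_G^\vee$ sits inside $H_1(G;\Q)\wedge H_1(G;\Q)$, which is itself finite-dimensional. Hence the defining ideal is generated by finitely many elements, each homogeneous of degree $2$. In other words, $\fh(G)$ is a finitely presented quadratic Lie algebra over $\Q$.

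Next I would apply Proposition \ref{prop:univ quad} to the quadratic Lie algebra $\fg := \fh(G)$. This yields a commutator-relators group $G_c$ together with an isomorphism $\fh(G_c) \cong \fg = \fh(G)$, which is exactly the assertion of the corollary. No further bookkeeping is needed, since the proposition already packages the construction of the group and the identification of its holonomy Lie algebra with the prescribed quadratic Lie algebra.

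There is no real obstacle here; the corollary is purely a repackaging of Proposition \ref{prop:univ quad}. The only subtlety worth flagging is that one should cite (or verify in passing) that $\im \mu_G^\vee$ is automatically finite-dimensional when $G$ is finitely generated, so that the hypotheses of Proposition \ref{prop:univ quad} are met without needing to first pass through Proposition \ref{prop:holo fp} to replace $G$ by a finitely presented approximation $G_f$.
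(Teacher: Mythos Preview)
Your proposal is correct and follows essentially the same route as the paper: set $\fg=\fh(G)$ and apply Proposition~\ref{prop:univ quad}. The only cosmetic difference is that the paper cites Proposition~\ref{prop:holo fp} to justify that $\fh(G)$ admits a finite quadratic presentation, whereas you (correctly, and as you yourself note in your final paragraph) observe directly from the definition \eqref{eq:holog} that finite generation of $G$ forces both $H_1(G;\Q)$ and $\im\mu_G^\vee\subset \bigwedge^2 H_1(G;\Q)$ to be finite-dimensional.
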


\begin{proof}
From Proposition \ref{prop:holo fp}, the holonomy Lie algebra $\h(G)$ 
has a quadratic presentation.
Letting $\g=\h(G)$ and applying Proposition \ref{prop:univ quad} 
yields the desired conclusion.
\end{proof}

\subsection{Solvable quotients of holonomy Lie algebras}
\label{subsec:chen lie bis}

The next lemma follows straight from the definitions, using 
the standard isomorphism theorems. 

\begin{lemma}
\label{lem:DerivedLieIso}
Let $\fg=\Lie(V)/\mathfrak{r}$ be a finitely generated Lie algebra. Then 
$\fg/\fg^{(i)}\cong \Lie(V)/(\mathfrak{r}+\Lie(V)^{(i)})$. Furthermore, if 
$\mathfrak{r}$ is a homogeneous ideal, then this is an isomorphism of 
graded Lie algebras. 
\end{lemma}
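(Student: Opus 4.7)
The plan is to make explicit the ``standard isomorphism theorems'' the author alludes to. Let $\pi\colon \Lie(V)\surj \fg$ be the canonical projection, so $\ker\pi=\mathfrak{r}$. The key algebraic fact is that the derived series is preserved under surjections: I will show by induction on $i$ that $\pi(\Lie(V)^{(i)})=\fg^{(i)}$. The base case $i=0$ is trivial. For the inductive step, since $\pi$ is surjective and a Lie algebra map, $\pi([\Lie(V)^{(i)},\Lie(V)^{(i)}])=[\pi(\Lie(V)^{(i)}),\pi(\Lie(V)^{(i)})]=[\fg^{(i)},\fg^{(i)}]=\fg^{(i+1)}$.

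From this, I would deduce that $\pi^{-1}(\fg^{(i)})=\mathfrak{r}+\Lie(V)^{(i)}$: the inclusion $\supseteq$ is immediate from $\pi(\mathfrak{r})=0$ and the computation above, while $\subseteq$ follows because any $x\in\pi^{-1}(\fg^{(i)})$ satisfies $\pi(x)=\pi(y)$ for some $y\in\Lie(V)^{(i)}$, hence $x-y\in\mathfrak{r}$. Then the third isomorphism theorem for Lie algebras gives
\[
\fg/\fg^{(i)} \;=\; \bigl(\Lie(V)/\mathfrak{r}\bigr)\big/\bigl(\pi^{-1}(\fg^{(i)})/\mathfrak{r}\bigr) \;\cong\; \Lie(V)\big/\bigl(\mathfrak{r}+\Lie(V)^{(i)}\bigr),
\]
which is the first claim.

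For the graded refinement, the main thing to check is that both $\mathfrak{r}$ and $\Lie(V)^{(i)}$ are homogeneous ideals of $\Lie(V)$, so that their sum is homogeneous, hence the quotient inherits the grading from $\Lie(V)$ and the displayed isomorphism becomes an isomorphism of graded Lie algebras. The ideal $\mathfrak{r}$ is homogeneous by hypothesis, and $\Lie(V)^{(i)}$ is spanned by iterated brackets of elements that are themselves iterated brackets of generators in $V$ (which all sit in degree $1$), so one sees by induction that $\Lie(V)^{(i)}$ is generated by homogeneous elements. More concretely, $\Lie(V)^{(0)}=\Lie(V)$ is homogeneous, and if $\Lie(V)^{(i)}$ is homogeneous then $[\Lie(V)^{(i)},\Lie(V)^{(i)}]$ is the Lie-algebra span of brackets of homogeneous elements, which is again homogeneous.

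I do not anticipate any real obstacle: the only subtlety is the computation $\pi^{-1}(\fg^{(i)})=\mathfrak{r}+\Lie(V)^{(i)}$, which hinges on the surjectivity of $\pi$ restricted to $\Lie(V)^{(i)}$, and the graded statement reduces to checking that the derived series of a free Lie algebra on a graded vector space consists of homogeneous ideals.
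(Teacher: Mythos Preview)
Your proposal is correct and is precisely what the paper has in mind: the paper gives no detailed proof, stating only that the lemma ``follows straight from the definitions, using the standard isomorphism theorems,'' and your argument (showing $\pi(\Lie(V)^{(i)})=\fg^{(i)}$, computing the preimage, and invoking the third isomorphism theorem, together with the homogeneity check) is exactly the unpacking of that remark.
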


The next result  sharpens and extends the first part of 
Theorem 7.3 from \cite{PS-imrn}. 

\begin{theorem}
\label{thm:holo chen lie}
Let $G=\langle \bx\mid \br\rangle$ be a finitely presented group, 
and set $\h=\h(G)$.  Let $\by=\{y_1,\dots,y_b\}$ be a basis of $H_1(G;\Q)$.
Then, for each $i\ge 2$, 
\[
\fh/\fh^{(i)} \cong \Lie(\by)/(  
{\rm ideal} (\kappa_2(w_{n-b+1}),\dots,\kappa_2(w_{m}))+ \Lie^{(i)}(\by)),
\]
where $b=b_1(G)$ and $w_k$ is defined in \eqref{eq:Gtilde}.
\end{theorem}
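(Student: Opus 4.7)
The plan is to combine the quadratic presentation of the holonomy Lie algebra obtained in Theorem~\ref{ThmholonomyLie} with the purely algebraic fact about solvable quotients recorded in Lemma~\ref{lem:DerivedLieIso}, after first reducing the problem to the echelon setting.

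First, I would use Proposition~\ref{prop: basis} applied to the given finite presentation $\langle \bx\mid \br\rangle$ of $G$ to produce the group $G_e = \langle \bx\mid \bw\rangle$ with echelon presentation and the surjection $\rho\colon G_e \surj G$ induced by the cellular map $f\colon K_{G_e}\to K_G$. Since $f$ induces an isomorphism on cohomology, Corollary~\ref{cor:holo map} gives an isomorphism of graded Lie algebras $\h(\rho)\colon \h(G_e) \isom \h(G)$. Being a Lie algebra isomorphism, $\h(\rho)$ sends the derived series of $\h(G_e)$ onto that of $\h(G)$, and hence descends to isomorphisms $\h(G_e)/\h(G_e)^{(i)} \cong \h(G)/\h(G)^{(i)}$ for every $i\ge 1$. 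Thus it suffices to establish the claimed presentation for $\h(G_e)/\h(G_e)^{(i)}$.

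Next, I would invoke Theorem~\ref{ThmholonomyLie} to write $\h(G_e) \cong \Lie(\by)/\mathfrak{r}$, where $\mathfrak{r} = \mathrm{ideal}(\kappa_2(w_{n-b+1}),\dots,\kappa_2(w_m))$. The key observation at this point is that, by construction, each generator $\kappa_2(w_k)$ lies in the degree $2$ component $\gr_2(\widehat{T}(G_\Q))$ (equivalently, in $\Lie^2(\by)$ via the identification of \eqref{eq:kappa2}), so $\mathfrak{r}$ is a homogeneous ideal of $\Lie(\by)$. This homogeneity is the small technical point that ensures the ensuing isomorphism respects gradings.

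Finally, I would apply Lemma~\ref{lem:DerivedLieIso} with $V = H_1(G;\Q)$ and the homogeneous ideal $\mathfrak{r}$ above, yielding an isomorphism of graded Lie algebras
\[
\h(G_e)/\h(G_e)^{(i)} \;\cong\; \Lie(\by)\big/\bigl(\mathfrak{r}+\Lie^{(i)}(\by)\bigr),
\]
which, combined with the reduction in the first step, gives the claim. I do not anticipate a serious obstacle here: once the echelon reduction and the quadratic presentation of Theorem~\ref{ThmholonomyLie} are in place, the result becomes a direct application of the standard isomorphism theorem encoded in Lemma~\ref{lem:DerivedLieIso}, with the only subtlety being to notice that $\kappa_2(w_k)$ is homogeneous of degree $2$ so that the quotient is graded.
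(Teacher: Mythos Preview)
Your proposal is correct and follows essentially the same route as the paper: invoke the quadratic presentation from Theorem~\ref{ThmholonomyLie} and then apply Lemma~\ref{lem:DerivedLieIso}. The only difference is that you spell out the echelon reduction via Proposition~\ref{prop: basis} and Corollary~\ref{cor:holo map} explicitly, whereas the paper absorbs this step into the discussion preceding Theorem~\ref{ThmholonomyLie} and simply cites that theorem directly.
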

\begin{proof}
By Theorem \ref{ThmholonomyLie}, 
the holonomy Lie algebra $\fh$ is isomorphic to the quotient of the free Lie algebra 
$\Lie(\by)$ by the ideal generated by  $\kappa_2(w_{n-b+1}),\dots,\kappa_2(w_{m})$. 
The claim follows from Lemma \ref{lem:DerivedLieIso}.
\end{proof}

Using Corollary \ref{Cor:holo commrel},  we obtain the following consequence.

\begin{corollary}
\label{cor:comm rel chen}
Let $G=\langle x_1,\dots, x_n \mid r_1,\dots, r_m\rangle$ be a 
commutator-relators group, and let $\h=\h(G)$.
Then, for each $i\ge 2$, the Lie algebra $\fh/\fh^{(i)}$ is isomorphic to the 
quotient of the free Lie algebra $\Lie(\bx)$ by the sum of the ideals 
$M_2(r_1), \dots,M_2(r_m)$, and $\Lie^{(i)}(\bx)$.
\end{corollary}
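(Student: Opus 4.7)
The plan is to combine the two ingredients that are already in place: the explicit presentation of $\h(G)$ for a commutator-relators group (Corollary \ref{Cor:holo commrel}), and the general isomorphism theorem for solvable quotients of finitely generated Lie algebras (Lemma \ref{lem:DerivedLieIso}). Conceptually, this is an instance of the more general Theorem \ref{thm:holo chen lie}, specialized to commutator-relators presentations, where the Magnus $\kappa$-expansion collapses to the classical Magnus expansion $M$ (as noted in \S\ref{subsec:kappa}) and the relators themselves, rather than the row-echelon words $w_k$, furnish the defining elements.

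First, I would invoke Corollary \ref{Cor:holo commrel} to identify $\h=\h(G)$ with the quotient
\[
\h \;\cong\; \Lie(\bx)\big/\mathfrak{a},
\]
where $\mathfrak{a}$ is the (homogeneous, in fact quadratic) ideal generated by the degree-$2$ Lie elements $M_2(r_1),\dots,M_2(r_m)$; recall that under the identification of primitive elements in $T_2(F_{\Q})$ with elements of $\Lie_2(\bx)$, one has $M_2(r_k)=\sum_{i<j}\varepsilon_{i,j}(r_k)[x_i,x_j]$. Although the cited corollary is stated for the integer holonomy Lie ring, the corresponding statement over $\Q$ follows by tensoring, since $H_1(K_G;\Z)$ is torsion-free in the commutator-relators case (Remark \ref{rem:commrel}).

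Second, I would apply Lemma \ref{lem:DerivedLieIso} with $V=\Q^n$ spanned by $\bx$, $\mathfrak{r}=\mathfrak{a}$, and the fixed integer $i\ge 2$. Since $\mathfrak{a}$ is homogeneous, the lemma yields an isomorphism of graded Lie algebras
\[
\h/\h^{(i)} \;\cong\; \Lie(\bx)\big/\bigl(\mathfrak{a}+\Lie(\bx)^{(i)}\bigr),
\]
which is precisely the stated presentation once one unpacks $\mathfrak{a}$ as the sum of the principal ideals generated by $M_2(r_1),\dots,M_2(r_m)$.

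There is no real obstacle: the whole argument is a two-line derivation from results already proved. The only point requiring a moment's care is the passage from Corollary \ref{Cor:holo commrel} (stated over $\Z$) to its $\Q$-analogue, but this is immediate because in the commutator-relators case $\h(G)=\h(G;\Z)\otimes\Q$ and the generators $M_2(r_k)$ are defined integrally. Everything else is a direct application of Lemma \ref{lem:DerivedLieIso}.
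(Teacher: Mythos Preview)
Your proposal is correct and follows essentially the same route as the paper: the paper simply states that the corollary is obtained ``using Corollary \ref{Cor:holo commrel},'' which amounts to combining that presentation of $\h(G)$ with Lemma \ref{lem:DerivedLieIso} (equivalently, specializing Theorem \ref{thm:holo chen lie} via the observation that $\kappa=M$ and $w_k=r_k$ for commutator-relators groups). Your remark about the $\Z$-to-$\Q$ passage is a reasonable point of care, though the paper treats it as implicit.
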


\section{Lower central series and the holonomy Lie algebra}
\label{sect:lcs holo}

\subsection{Lower central series}
\label{subsec:lcs}

Let $G$ be a finitely generated group, and let $\{\Gamma_k G\}_{k\geq 1}$ 
be its lower central series (LCS). 
The LCS quotients of $G$ are finitely generated abelian groups.  
Taking the direct sum of these groups, we obtain a graded Lie ring over $\Z$,
\begin{equation}
\label{eq:grg}
\gr(G;\Z)=\bigoplus\limits_{k\geq 1}\Gamma_kG/\Gamma_{k+1}G. 
\end{equation}

The Lie bracket $[x,y]$ on $\gr(G;\Z)$ is induced from the group commutator,  
$[x,y]=xyx^{-1}y^{-1}$. More precisely, if $x\in \Gamma_r G$ and $y\in\Gamma_s G$, 
then $[x+\Gamma_{r+1}G,y+\Gamma_{s+1}G]=xyx^{-1}y^{-1}+\Gamma_{r+s+1}G$. 
The Lie algebra $\gr(G;\Q)=\gr(G;\Z)\otimes \Q$ is called the 
\textit{associated graded Lie algebra}\/ (over $\Q$) of the group $G$.  
For simplicity, we will usually drop the $\Q$-coefficients, and simply write 
it as $\gr(G)$.  

The group $G$ is said to be {\em nilpotent}\/ of class $\leq k$ if $\Gamma_{k+1} G=\{1\}$.
For each $k\ge 2$, the factor group $G/\Gamma_k G$ is the 
maximal $(k-1)$-step nilpotent quotient of $G$. 
The canonical projection $G\to G/\Gamma_k G$ induces an epimorphism 
$\gr(G) \to \gr(G/\Gamma_k G)$, which is an isomorphism in degrees 
$s< k$. 
We refer to Lazard \cite{Lazard54} and Magnus et al.~\cite{Magnus-K-S} for more details..

\subsection{A comparison map}
\label{subsec:Phimap}

Once again, let $G$ be a finitely generated group.   Although the 
next lemma is known, we provide a proof, both for the sake of completeness, 
and for later use. 

\begin{lemma}[\cite{Markl-Papadima, PS-imrn}]
 \label{lem:holoepi}
There exists a natural epimorphism of graded $\Q$-Lie algebras, 
\begin{equation*}
\label{map:Phi}
\xymatrixcolsep{20pt}
\xymatrix{\Phi_G\colon \fh(G) \ar@{->>}[r]& \gr(G)}, 
\end{equation*}
inducing isomorphisms in degrees $1$ and $2$.  
\end{lemma}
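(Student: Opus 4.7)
The plan is to build $\Phi_G$ in three stages: first produce a surjection from the free Lie algebra on $H_1(G;\Q)$ onto $\gr(G)$; next show it kills $\im\mu^{\vee}_G$ so that it descends to $\fh(G)$; and finally verify the induced map is bijective in degrees $1$ and $2$ and is natural.

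First, I would identify $\gr_1(G)=G/\Gamma_2 G\otimes\Q=H_1(G;\Q)$, and extend the resulting inclusion $H_1(G;\Q)\hookrightarrow \gr(G)$ via the universal property of the free Lie algebra to a morphism of graded $\Q$-Lie algebras $\widetilde{\Phi}_G\colon \Lie(H_1(G;\Q))\to \gr(G)$. This map is surjective because the standard identity $[\Gamma_k G,G]=\Gamma_{k+1}G$ passes, after tensoring with $\Q$, to the relation $\gr_{k+1}(G)=[\gr_1(G),\gr_k(G)]$; hence $\gr(G)$ is generated as a Lie algebra by its degree-$1$ piece, which lies in the image of $\widetilde{\Phi}_G$.

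Next, I would show $\widetilde{\Phi}_G$ vanishes on the image of $\mu^{\vee}_G$. Consider the $5$-term exact sequence in rational homology associated to the extension $1\to [G,G]\to G\to G_{\ab}\to 1$. Combined with the identification $H_2(G_{\ab};\Q)\cong H_1(G;\Q)\wedge H_1(G;\Q)$ and Hopf's formula $\Gamma_2 G/\Gamma_3 G\otimes\Q\cong \gr_2(G)$, this yields an exact sequence
\[
H_2(G;\Q)\xrightarrow{\,\mu^{\vee}_G\,} H_1(G;\Q)\wedge H_1(G;\Q)\xrightarrow{\,\beta\,}\gr_2(G)\longrightarrow 0,
\]
where $\beta$ is the bracket map, well-defined by antisymmetry of the commutator modulo $\Gamma_3 G$. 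Since $\beta$ agrees with $\widetilde{\Phi}_G$ in degree $2$, this immediately gives $\widetilde{\Phi}_G\circ\mu^{\vee}_G=0$, so $\widetilde{\Phi}_G$ annihilates the degree-$2$ generators of the ideal $\langle\im\mu^{\vee}_G\rangle$, and then annihilates the whole ideal (since it is a Lie algebra map). Therefore $\widetilde{\Phi}_G$ factors through a surjection $\Phi_G\colon \fh(G)\twoheadrightarrow\gr(G)$. It is the identity in degree $1$ by construction, and the displayed exact sequence shows it is an isomorphism in degree $2$. Naturality follows from the naturality of $H_1(-;\Q)$, of the lower central series, and of the free Lie algebra functor.

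The main technical point is the identification of $\ker\beta$ with $\im\mu^{\vee}_G$ inside $H_1(G;\Q)\wedge H_1(G;\Q)$, i.e., the interplay between the cup-product pairing and the connecting map in the Lyndon--Hochschild--Serre spectral sequence. Once this classical fact is in hand, the remainder of the argument is essentially formal. I would cite the standard references (Stallings, Sullivan, or \cite{Markl-Papadima, PS-imrn}) for the sign conventions that make the duality between $\mu_G$ and the bracket map precise.
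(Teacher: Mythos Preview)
Your proposal is correct and takes essentially the same approach as the paper: construct the surjection from the free Lie algebra using generation in degree~$1$, then invoke the exact sequence relating $\mu^{\vee}_G$, the bracket map, and $\gr_2(G)$ to factor through $\fh(G)$ and conclude bijectivity in degree~$2$. The only cosmetic difference is that the paper cites the cohomological Sullivan--Lambe sequence and dualizes, whereas you reach the same homological sequence via the $5$-term sequence of the extension $1\to [G,G]\to G\to G_{\ab}\to 1$; in both cases the substantive input is the identification of the coinflation map $H_2(G;\Q)\to H_2(G_{\ab};\Q)\cong \bigwedge^2 H_1(G;\Q)$ with $\mu^{\vee}_G$, which you rightly flag as the point requiring a reference.
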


\begin{proof}
As first noted by Sullivan \cite{Sullivan75} in a particular case, 
and then proved by Lambe \cite{Lambe86} in general, there is a 
natural exact sequence 
\begin{equation}
\label{eq:sl}
\xymatrixcolsep{20pt}
\xymatrix{0\ar[r]& (\Gamma_2G/\Gamma_3 G\otimes \Q)^* 
\ar^(.45){\beta}[r]& H^1(G;\Q)\wedge H^1(G;\Q) \ar^(.62){\mu_G}[r]& H^2(G;\Q)},  
\end{equation}
where $\beta$ is the dual of Lie bracket product. Consequently, 
$\im(\mu^{\vee}_G) = \ker(\beta^{\vee})$. 

Recall now that the associated graded Lie algebra $\gr(G)$ 
is generated by its degree $1$ piece,  $H_1(G;\Q)\cong \gr_1(G)$.  
Hence, there is a natural epimorphism of graded $\Q$-Lie algebras,
\begin{equation}
\label{eq:phig}
\xymatrixcolsep{20pt}
\xymatrix{\varphi_G\colon \Lie(H_1(G;\Q))\ar@{->>}[r]& \gr(G)}, 
\end{equation}
restricting to the identity in degree $1$, and to the Lie bracket map 
$[\,,\,]\colon \bigwedge^2 \gr_1(G)\rightarrow \gr_2(G)$ 
in degree $2$. By the above observation, the kernel of 
this map coincides with the image of $\mu^{\vee}_G$.  
Thus, $\varphi_G$ factors through a morphism $\Phi_G\colon \h(G)\to \gr(G)$, 
which enjoys all the claimed properties.
\end{proof} 

\subsection{Nilpotent and derived quotients}
\label{subsec:quotients}

As a quick application, let us compare the holonomy Lie algebra 
of a group to the holonomy Lie algebras of its nilpotent quotients and derived quotients. 

\begin{prop}
\label{prop:holo nq}
Let $G$ be a finitely generated group. Then
\begin{equation*}
\label{eq:holo nilp}
\fh(G/\Gamma_k G)=\begin{cases}
\fh(G)/\fh(G)^{\prime} &\text{for $k=2$}, \\
\fh(G) &\text{for $k\ge3$}.
\end{cases}
\end{equation*}
In particular, the holonomy Lie algebra of $G$ depends 
only on the second nilpotent quotient, $G/\Gamma_3 G$. 
\end{prop}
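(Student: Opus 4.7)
The entire strategy rests on the fact that, by Lemma \ref{lem:holoepi} and the exact sequence \eqref{eq:sl}, the holonomy Lie algebra $\fh(H)$ of any finitely generated group $H$ depends only on the bracket map $H_1(H;\Q) \wedge H_1(H;\Q) \to \gr_2(H)$. Indeed, dualizing \eqref{eq:sl} shows that $\im \mu_H^{\vee}$ equals the kernel of the Lie bracket $\bwedge^2 H_1(H;\Q) \twoheadrightarrow \gr_2(H)$, so
\[
\fh(H) \;=\; \Lie(H_1(H;\Q))\big/\bigl\langle\ker\bigl(\textstyle\bwedge^2 H_1(H;\Q)\to \gr_2(H)\bigr)\bigr\rangle,
\]
and this presentation is natural in $H$.

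First I would handle the case $k \geq 3$. Let $\pi\colon G \twoheadrightarrow G/\Gamma_k G$ be the canonical projection. Since abelianization factors through any nilpotent quotient, $\pi$ induces an isomorphism $H_1(G;\Q) \isom H_1(G/\Gamma_k G;\Q)$. Moreover, as recalled in \S\ref{subsec:lcs}, the induced map $\gr(G) \twoheadrightarrow \gr(G/\Gamma_k G)$ is an isomorphism in degrees $s < k$, so for $k \geq 3$ it yields $\gr_2(G) \isom \gr_2(G/\Gamma_k G)$, and these isomorphisms intertwine the bracket maps. Naturality of the presentation displayed above then forces the surjection $\fh(\pi)\colon \fh(G) \twoheadrightarrow \fh(G/\Gamma_k G)$ (see \S\ref{subsec:holo lie gp}) to be an isomorphism.

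For the case $k=2$, note that $G/\Gamma_2 G = G_{\ab}$ is abelian, hence $\gr_2(G_{\ab}) = 0$. The presentation above then gives
\[
\fh(G_{\ab}) \;=\; \Lie(H_1(G;\Q))\big/\bigl\langle \textstyle\bwedge^2 H_1(G;\Q)\bigr\rangle,
\]
and the Lie ideal generated by all brackets $[y_i,y_j]$ is precisely the derived subalgebra $\Lie(H_1(G;\Q))'$; so $\fh(G_{\ab})$ is the abelian Lie algebra on $H_1(G;\Q)$, concentrated in degree $1$. On the other hand, because $\fh(G)$ is generated in degree $1$ by $H_1(G;\Q)$, its derived subalgebra $\fh(G)'$ is everything in degrees $\ge 2$, so $\fh(G)/\fh(G)'$ is likewise the abelian Lie algebra on $H_1(G;\Q)$. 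Finally, the surjection $\fh(\pi)\colon \fh(G)\twoheadrightarrow \fh(G_{\ab})$ lands in an abelian Lie algebra and therefore factors through $\fh(G)/\fh(G)'$; the induced map between two copies of the same abelian Lie algebra on $H_1(G;\Q)$ is the identity in degree $1$, hence an isomorphism.

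The main (minor) obstacle is just bookkeeping: verifying that the isomorphism $\gr_2(G) \cong \gr_2(G/\Gamma_k G)$ is compatible with the natural bracket map out of $\bwedge^2 H_1$, so that naturality of \eqref{eq:sl} can be invoked to conclude that the relation ideals defining the two holonomy Lie algebras coincide. Once this compatibility is in hand, both conclusions follow immediately.
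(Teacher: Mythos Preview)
Your proposal is correct and follows essentially the same route as the paper: both arguments use the Sullivan--Lambe exact sequence \eqref{eq:sl} to identify $\im(\mu^{\vee}_H)$ with the kernel of the bracket $\bwedge^2 H_1(H;\Q)\to\gr_2(H)$, and then invoke the isomorphisms $H_1(G;\Q)\cong H_1(G/\Gamma_kG;\Q)$ and $\gr_2(G)\cong\gr_2(G/\Gamma_kG)$ for $k\ge 3$. The paper dismisses the $k=2$ case as trivial, whereas you spell it out, but otherwise the logic is the same.
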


\begin{proof}
The case $k=2$ is trivial, so let us assume $k\ge 3$. 
By a previous remark, the projection $G\to G/\Gamma_k G$ induces 
an isomorphism $\gr_2(G) \to \gr_2(G/\Gamma_k G)$. 
Furthermore, $H_1(G;\Q)\cong H_1(G/\Gamma_k G)$.
Using now the dual of the exact sequence \eqref{eq:sl}, we see that 
$\im(\mu^{\vee}_G)=\im(\mu^{\vee}_{G/\Gamma_k G})$. The desired conclusion follows.
\end{proof}

\begin{prop}
\label{prop:derivedquo}
The holonomy Lie algebras of the derived quotients of $G$ are given by
\begin{equation*}
\label{eq:holo derived}
\fh(G/G^{(i)})=\begin{cases}
\fh(G)/\fh(G)^{\prime} &\text{for $i=1$}, \\
\fh(G) &\text{for $i\ge 2$}.
\end{cases}
\end{equation*}
\end{prop}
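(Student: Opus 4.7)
The plan is to reduce everything to Proposition \ref{prop:holo nq} by comparing the derived series with the lower central series. I will treat the two cases separately.

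For $i=1$: Since $G^{(1)}=[G,G]=\Gamma_2 G$ by definition, the quotient $G/G^{(1)}$ coincides with $G/\Gamma_2 G$. Applying Proposition \ref{prop:holo nq} with $k=2$ immediately gives $\fh(G/G^{(1)})=\fh(G)/\fh(G)'$, as desired. (Equivalently, one can see this directly: $G^{ab}$ has $H^*(\cdot;\Q)=\bwedge^* H^1$, so $\mu^{\vee}_{G^{ab}}$ surjects onto $H_1\wedge H_1$, forcing $\fh(G^{ab})$ to be the abelian Lie algebra on $H_1(G;\Q)$, which matches the abelianization of the quadratic Lie algebra $\fh(G)$.)

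For $i\geq 2$: The key group-theoretic observation is that $G^{(i)}\subseteq G^{(2)}=[G',G']\subseteq [G,G']=\Gamma_3 G$. Setting $H:=G/G^{(i)}$, this containment gives $\Gamma_3 H=(\Gamma_3 G\cdot G^{(i)})/G^{(i)}=\Gamma_3 G/G^{(i)}$, and hence the canonical projection $H\surj H/\Gamma_3 H$ is identified with the further quotient $G/G^{(i)}\surj G/\Gamma_3 G$. Functoriality of $\fh$ then produces a commutative triangle
\[
\xymatrix@R=14pt{
\fh(G)\ar[r]\ar[dr] & \fh(G/G^{(i)})\ar[d]\\
& \fh(G/\Gamma_3 G)\, ,
}
\]
in which the diagonal arrow is an isomorphism by Proposition \ref{prop:holo nq} applied to $G$, and the vertical arrow is an isomorphism by Proposition \ref{prop:holo nq} applied to $H=G/G^{(i)}$ (with $k=3$). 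The two-out-of-three principle forces the horizontal arrow to be an isomorphism as well, yielding $\fh(G/G^{(i)})\cong \fh(G)$.

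The only real point to verify is the group-theoretic containment $G^{(i)}\subseteq \Gamma_3 G$ for $i\geq 2$, together with the (routine) identification $\Gamma_3(G/G^{(i)})=\Gamma_3 G/G^{(i)}$; once these are in hand, the proof is a formal diagram chase invoking Proposition \ref{prop:holo nq} twice. I do not anticipate any serious obstacle.
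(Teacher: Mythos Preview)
Your proof is correct and follows essentially the same strategy as the paper's: factor the projection to a nilpotent quotient through $G/G^{(i)}$ and invoke Proposition~\ref{prop:holo nq} to see that the outer maps on holonomy are isomorphisms, forcing the middle one to be as well. The only cosmetic difference is that the paper uses the inclusion $G^{(i)}\subseteq \Gamma_{2^i}G$ and factors through $G/\Gamma_{2^i}G$, whereas you observe more sharply that $G^{(i)}\subseteq \Gamma_3 G$ for $i\ge 2$ and factor through $G/\Gamma_3 G$; your version is a mild simplification but the argument is the same.
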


\begin{proof}
For $i=1$, the statement trivially holds, so we may as well assume 
$i\ge 2$. 
It is readily proved by induction that $G^{(i)} \subseteq \Gamma_{2^i}(G)$.
Hence, the projections 
\begin{equation}
\label{eq:derivedseries}
\xymatrixcolsep{18pt}
\xymatrix{G\ar@{->>}[r] & G/G^{(i)}\ar@{->>}[r] &  G/\Gamma_{2^i}G}
\end{equation}
yield natural projections 
$\fh(G)\surj \fh(G/G^{(i)})\surj \fh(G/\Gamma_{2^i}G)=\fh(G)$.  
By Proposition \ref{prop:holo nq},
the composition of these projections 
is an isomorphism of Lie algebras. Therefore, 
the surjection $\fh(G)\surj \fh(G/G^{(i)})$ is an isomorphism.
\end{proof}

An analogous result holds for associated graded Lie algebras, 
albeit in somewhat weaker form. 
The quotient map, $q_i\colon G\surj G/G^{(i)}$, induces a surjective 
morphism between associated graded Lie algebras.  Plainly,  
this morphism is the canonical identification in degree $1$.  In fact, 
more is true. 

\begin{lemma}
\label{lem:griso}
For each $i\ge 2$ and each $k\leq 2^i-1$, the map 
$\gr(q_i)\colon \gr_k(G)\surj \gr_k(G/G^{(i)})$ is an isomorphism.
\end{lemma}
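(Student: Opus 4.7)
The plan is to mimic the argument used in Proposition \ref{prop:derivedquo}, replacing the holonomy functor $\fh$ with the associated graded Lie algebra functor $\gr$, and using the fact that the projection onto a nilpotent quotient is a graded isomorphism in a range of degrees.

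First I would recall the key containment $G^{(i)} \subseteq \Gamma_{2^i} G$, which is proved by induction on $i$ (the base $i=1$ is trivial, and if $G^{(i)} \subseteq \Gamma_{2^i} G$, then $G^{(i+1)} = [G^{(i)}, G^{(i)}] \subseteq [\Gamma_{2^i} G, \Gamma_{2^i} G] \subseteq \Gamma_{2^{i+1}} G$ by the standard commutator calculus). This containment gives a factorization of the canonical projection $G \surj G/\Gamma_{2^i} G$ through the derived quotient:
\[
\xymatrixcolsep{18pt}
\xymatrix{G \ar@{->>}[r]^-{q_i} & G/G^{(i)} \ar@{->>}[r]^-{p_i} & G/\Gamma_{2^i} G.}
\]

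Next I would apply the functor $\gr$ to this composition. Since both $q_i$ and $p_i$ are surjective homomorphisms, each induces a graded surjection on associated graded Lie algebras. Therefore, for every $k \ge 1$, we obtain surjections
\[
\xymatrixcolsep{22pt}
\xymatrix{\gr_k(G) \ar@{->>}[r]^-{\gr(q_i)} & \gr_k(G/G^{(i)}) \ar@{->>}[r]^-{\gr(p_i)} & \gr_k(G/\Gamma_{2^i} G).}
\]
By the basic fact recalled in \S\ref{subsec:lcs}, the projection onto the $(2^i-1)$-step nilpotent quotient $G/\Gamma_{2^i} G$ induces an isomorphism $\gr_k(G) \isom \gr_k(G/\Gamma_{2^i} G)$ for all $k < 2^i$, that is, for $k \le 2^i - 1$. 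Hence in this range the composition $\gr(p_i) \circ \gr(q_i)$ is an isomorphism, and since $\gr(q_i)$ is already surjective, it must be injective too, whence it is an isomorphism.

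There is essentially no obstacle here; the only point requiring care is the inductive verification of $G^{(i)} \subseteq \Gamma_{2^i} G$, which is standard, and the observation that the LCS quotient map is a graded isomorphism below the nilpotency degree, which was already invoked in \S\ref{subsec:lcs}. The argument is functorial and the bound $k \le 2^i - 1$ is sharp exactly because it is the best range in which the factor $p_i$ is a graded isomorphism.
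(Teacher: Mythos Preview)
Your proof is correct and follows essentially the same approach as the paper's: factor the projection $G\surj G/\Gamma_{2^i}G$ through $G/G^{(i)}$, apply $\gr$, and use that the composite is an isomorphism in degrees $k<2^i$ to conclude that the surjection $\gr_k(q_i)$ is an isomorphism. The paper's proof is more terse, but the argument is identical.
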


\begin{proof}
Taking associated graded Lie algebras in sequence 
\eqref{eq:derivedseries} yields epimorphisms 
\begin{equation}
\label{eq:derivedseries bis}
\xymatrixcolsep{18pt}
\xymatrix{\gr(G) \ar@{->>}[r] & \gr\big(G/G^{(i)}\big) \ar@{->>}[r] 
& \gr\big(G/\Gamma_{2^i}G\big)}.
\end{equation}
By a remark we made at the end of \S\ref{subsec:lcs}, the composition of these 
maps is an isomorphism in degrees $k<2^i$. The conclusion follows at once.
\end{proof}

The next result distills the statements of Theorem 9.3 and Corollary 9.5 
from \cite{SW-formality}, in a form needed here; this result sharpens and 
extends Theorem 4.2 from \cite{PS-imrn}.

\begin{theorem}[\cite{SW-formality}]
\label{thm:chenlie}
Let $G$ be a finitely generated group. 
For each $i\ge 2$, the quotient map $G\surj G/G^{(i)}$ induces a 
natural epimorphism of graded Lie algebras, 
$\gr(G)/\gr(G)^{(i)} \surj \gr(G/G^{(i)})$.  
Moreover, if $G$ is a $1$-formal group, then 
$\h(G)/\h(G)^{(i)} \cong \gr(G/G^{(i)})$.
\end{theorem}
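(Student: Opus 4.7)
I would handle the two assertions in sequence. For the first, the plan is to start with the functorial surjection $\gr(q_i)\colon \gr(G)\surj \gr(G/G^{(i)})$ (surjective because $q_i$ is surjective on abelianizations and $\gr$ of any group is generated in degree one), and then show that $\gr(G)^{(i)}$ is contained in its kernel. The cleanest route is to prove the auxiliary statement that for any group $H$ with $H^{(i)} = 1$ one has $\gr(H)^{(i)} = 0$. This goes by induction on $i$, using that the Lie bracket on $\gr(H)$ is induced by the group commutator: any element of $\gr(H)^{(i)} = [\gr(H)^{(i-1)},\gr(H)^{(i-1)}]$ is a linear combination of classes of the form $\overline{[\alpha,\beta]}$ with $\alpha,\beta$ lifts of elements of $\gr(H)^{(i-1)}$; by induction these lifts may be taken inside $H^{(i-1)}$, so $[\alpha,\beta]\in H^{(i)}$, which vanishes under our hypothesis. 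Applying this with $H=G/G^{(i)}$ gives the desired factorization $\gr(G)/\gr(G)^{(i)}\surj \gr(G/G^{(i)})$, and naturality in $G$ is automatic from the construction.

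For the second claim I would reduce the problem to two separate isomorphism questions. When $G$ is $1$-formal, it is in particular graded-formal, so the canonical projection $\Phi_G\colon \h(G)\to \gr(G)$ from Lemma \ref{lem:holoepi} is an isomorphism of graded Lie algebras, and it descends to $\h(G)/\h(G)^{(i)} \isom \gr(G)/\gr(G)^{(i)}$. Combined with the first part, all that remains is to verify that the natural surjection $\gr(G)/\gr(G)^{(i)}\surj \gr(G/G^{(i)})$ is injective under $1$-formality. For this I would pass to Malcev Lie algebras: since $\fm(G)$ is isomorphic, as a complete filtered Lie algebra, to the degree completion $\widehat{\gr(G)}$, and since the Quillen isomorphism yields $\gr(\fm(H))\cong \gr(H)$ for any finitely generated $H$, the desired identification follows once one shows that Malcev completion commutes with forming the $i$-th derived quotient in the $1$-formal case, namely $\fm(G/G^{(i)}) \cong \fm(G)/\overline{\fm(G)^{(i)}}$, where the overline denotes closure in the filtration topology.

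The main obstacle is exactly this last compatibility. In general the derived series of $G$ is not faithfully detected by that of $\fm(G)$, and one genuinely needs $1$-formality to reduce the problem to a purely Lie-theoretic statement about $\widehat{\gr(G)}$. I would attempt the reduction by induction on $i$: the base case follows from the filtered-formality of the abelianization, and the inductive step would use the extension $1\to G^{(i-1)}/G^{(i)} \to G/G^{(i)} \to G/G^{(i-1)}\to 1$ together with the inherited graded-formality of the derived quotients to transfer the comparison from one level to the next. Once this compatibility is in hand, taking associated graded Lie algebras on both sides of $\fm(G/G^{(i)}) \cong \fm(G)/\overline{\fm(G)^{(i)}}$ and applying Quillen's theorem identifies $\gr(G/G^{(i)})$ with $\gr(G)/\gr(G)^{(i)}$, which chains back through $\Phi_G$ to give the announced isomorphism with $\h(G)/\h(G)^{(i)}$.
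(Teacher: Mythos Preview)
This theorem is not proved in the present paper; it is quoted from \cite{SW-formality} (specifically Theorem~9.3 and Corollary~9.5 there), so there is no in-paper argument to compare against. I can only assess your proposal on its own terms.

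Your treatment of the first assertion is essentially correct, though the induction should be phrased more carefully: what you actually need is that for \emph{any} group $H$ and any $i$, every degree-$k$ element of $\gr(H)^{(i)}$ is the class of some $g\in\Gamma_k H\cap H^{(i)}$. This follows by your induction once you note that a finite sum of classes $\overline{g_j}$ in a fixed degree equals the class of the product $\prod_j g_j$. The vanishing statement you wrote is then an immediate corollary, and the factorization through $\gr(G)/\gr(G)^{(i)}$ follows.

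For the second assertion your outline is in the right direction but has a genuine gap at the step you yourself flag. The proposed induction on $i$ via the extension $1\to G^{(i-1)}/G^{(i)}\to G/G^{(i)}\to G/G^{(i-1)}\to 1$ and ``inherited graded-formality of the derived quotients'' does not go through as stated: graded-formality of $G$ does not obviously descend to $G/G^{(i-1)}$ (Proposition~\ref{prop:derivedquo} gives $\h(G/G^{(i-1)})=\h(G)$, but Lemma~\ref{lem:griso} only controls $\gr(G/G^{(i-1)})$ in low degrees), and Malcev completion is not exact on short exact sequences, so the extension alone does not let you transfer the comparison. The route that actually works---and which the citation pattern in the proof of Proposition~\ref{prop:seifert-chen} confirms is what \cite{SW-formality} does---is to use filtered-formality directly: from $\fm(G)\cong\widehat{\gr(G)}$ one gets $\overline{\fm(G)^{(i)}}\cong\widehat{\gr(G)^{(i)}}$ (because the derived series of a graded Lie algebra is homogeneous), and combining this with the general identity $\fm(G/G^{(i)})\cong\fm(G)/\overline{\fm(G)^{(i)}}$ and Quillen's isomorphism yields $\gr(G/G^{(i)})\cong\gr(G)/\gr(G)^{(i)}$. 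Graded-formality then gives the link to $\h(G)/\h(G)^{(i)}$. You have the architecture right; what is missing is replacing the vague inductive step by this direct filtered-formal computation.
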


Combining Theorem \ref{thm:chenlie} with Theorem \ref{thm:holo chen lie}, we obtain 
the following corollary.

\begin{corollary}
\label{cor:chenlie-formal}
Let $G=\langle \bx\mid \br\rangle$ be a finitely presented, $1$-formal group. 
Let $\by=\{y_1,\dots,y_b\}$ be a basis of $H_1(G;\Q)$.
Then, for each $i\ge 2$, 
\[
\gr(G/G^{(i)}) \cong \Lie(\by)/(  
{\rm ideal} (\kappa_2(w_{n-b+1}),\dots,\kappa_2(w_{m}))+ \Lie^{(i)}(\by)),
\]
where $b=b_1(G)$ and $w_k$ is defined in \eqref{eq:Gtilde}.
\end{corollary}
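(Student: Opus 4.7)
The plan is to combine two results already established in the paper, with essentially no additional work beyond invoking them in the correct order. The first ingredient is Theorem \ref{thm:chenlie} (drawn from \cite{SW-formality}), which asserts that when $G$ is $1$-formal, the canonical epimorphism from the holonomy Lie algebra onto the associated graded Lie algebra of the derived quotient refines to an isomorphism $\fh(G)/\fh(G)^{(i)} \cong \gr(G/G^{(i)})$ of graded $\Q$-Lie algebras. This is precisely what is needed to pass information from the holonomy side, where we have explicit presentations coming from cup-products and the Magnus-type expansion $\kappa$, to the associated-graded side, which is what the statement is really about.

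The second ingredient is Theorem \ref{thm:holo chen lie}, which, for any finitely presented group $G = \langle \bx \mid \br \rangle$, identifies $\fh(G)/\fh(G)^{(i)}$ with the quotient
\[
\Lie(\by)\big/\bigl({\rm ideal}(\kappa_2(w_{n-b+1}),\dots,\kappa_2(w_m)) + \Lie^{(i)}(\by)\bigr),
\]
where the $w_k$'s are the relators of the echelon approximation $G_e$ produced in Proposition \ref{prop: basis}, and $\by$ is the chosen basis of $H_1(G;\Q)$. Since this theorem requires only finite presentability and the construction of $G_e$ from the given presentation, it applies directly in our setting.

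The proof then consists of composing the two isomorphisms: start with the explicit presentation of $\fh(G)/\fh(G)^{(i)}$ given by Theorem \ref{thm:holo chen lie}, and identify the left-hand side with $\gr(G/G^{(i)})$ via Theorem \ref{thm:chenlie}. Both isomorphisms are natural morphisms of graded Lie algebras, so the composition lands in the right category. There is no obstacle here beyond correctly citing the hypotheses: the $1$-formality of $G$ is used exactly once, namely to upgrade the epimorphism of Theorem \ref{thm:chenlie} to an isomorphism. Without that hypothesis one retains only a surjection $\fh(G)/\fh(G)^{(i)} \surj \gr(G/G^{(i)})$, and hence only a presentation of $\gr(G/G^{(i)})$ as a quotient of the right-hand side, not an isomorphism.
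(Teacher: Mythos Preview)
Your proposal is correct and matches the paper's own approach exactly: the corollary is stated as an immediate consequence of combining Theorem~\ref{thm:chenlie} (the $1$-formal isomorphism $\fh(G)/\fh(G)^{(i)}\cong\gr(G/G^{(i)})$) with Theorem~\ref{thm:holo chen lie} (the explicit presentation of $\fh(G)/\fh(G)^{(i)}$). Your additional remark about what survives without the $1$-formality hypothesis is a helpful clarification but not needed for the proof itself.
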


\subsection{Graded-formality}
\label{subsec:grf}

We conclude our discussion of associated graded 
Lie algebras and holonomy Lie algebras  
by recalling a notion that will be important in the sequel.  
Recall from Lemma \ref{lem:holoepi} that, for any finitely 
generated group $G$, there is a canonical epimorphism 
of graded Lie algebras, $\Phi_G\colon \fh(G)\surj \gr(G)$.  
We say that the group $G$ is {\em graded-formal}\/  (over $\Q$) 
if the map $\Phi_G$ is an isomorphism. 

This notion was considered in various ways by Chen \cite{Chen73}, 
Kohno \cite{Kohno}, Labute \cite{Labute85}, and Hain \cite{Hain85}. 
It was also recently studied by Lee in \cite{Lee},  
where it was called `graded $1$-formality.'   
Various relationships between graded-formality and other formality 
properties were studied in \cite{SW-formality}.  In particular, 
a finitely generated group $G$ is $1$-formal if and only if it is 
both graded-formal and filtered-formal. We give here two alternate 
characterizations of graded formality, which oftentimes are easier to verify.

\begin{prop}
\label{prop:graded-formal-cir}
A finitely generated group $G$ is graded-formal if and only if 
one of the following two conditions is satisfied.
\begin{enumerate}
\item \label{gfc1}  
The Lie algebra $\gr(G)$ is quadratic.
\item \label{gfc2}
$\dim_{\Q} \h_n(G) = \dim_{\Q} \gr_n(G)$, for all $n\ge 1$.
\end{enumerate}
\end{prop}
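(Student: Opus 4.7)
The plan is to analyze the canonical epimorphism $\Phi_G\colon \fh(G)\surj \gr(G)$ from Lemma \ref{lem:holoepi} degree by degree, using that $\fh(G)$ is tautologically quadratic and that $\Phi_G$ is already an isomorphism in degrees $1$ and $2$.

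First, I would dispose of the easy directions. Graded-formality means $\Phi_G$ is an isomorphism, so trivially it gives (1) (since $\fh(G)$ admits a quadratic presentation by definition) and (2) (since equal implies equidimensional). So the content lies in the two converse implications.

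For the implication $(2)\Rightarrow$ graded-formal, I would argue as follows. Since $H_1(G;\Q)$ is finite-dimensional, the free Lie algebra $\Lie(H_1(G;\Q))$ has finite-dimensional graded pieces; hence so do its quotients $\fh(G)$ and $\gr(G)$. The map $\Phi_G$ is a graded surjection, so in each degree $n$ it is a surjection between finite-dimensional $\Q$-vector spaces. Equality of dimensions therefore forces $\Phi_{G,n}$ to be an isomorphism in every degree, giving graded-formality.

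For the implication $(1)\Rightarrow$ graded-formal, the key is the exact sequence \eqref{eq:sl}, which identifies $\im(\mu_G^{\vee})$ with the kernel of the Lie bracket map $\beta^{\vee}\colon \bigwedge^2 H_1(G;\Q)\to (\Gamma_2 G/\Gamma_3 G\otimes\Q)^{\vee\vee}\cong \gr_2(G)$. Assume $\gr(G)$ is quadratic, i.e.\ admits a presentation $\Lie(V)/\langle R\rangle$ with $V$ concentrated in degree $1$ and $R$ in degree $2$. Because $\gr(G)$ is generated in degree $1$, we may identify $V$ with $\gr_1(G)=H_1(G;\Q)$, and then $R$ must equal the full degree-$2$ kernel of $\Lie(H_1(G;\Q))\surj \gr(G)$, which by \eqref{eq:sl} is precisely $\im(\mu_G^{\vee})$. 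Thus the quadratic presentation of $\gr(G)$ coincides with the defining presentation of $\fh(G)$, so $\Phi_G$ is an isomorphism.

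The main (small) obstacle is the converse of (1): one must be careful to argue that the degree-$2$ relations of any quadratic presentation of $\gr(G)$ must coincide with $\im(\mu_G^{\vee})$, rather than merely containing it. This is handled by the observation above that, since $\gr_1(G)\cong H_1(G;\Q)$ canonically and $\Phi_G$ is an isomorphism in degree $2$, the degree-$2$ kernel of $\Lie(H_1(G;\Q))\surj \gr(G)$ is pinned down exactly by the Sullivan--Lambe sequence \eqref{eq:sl}.
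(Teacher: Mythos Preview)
Your proof is correct and follows essentially the same approach as the paper: the easy forward direction, the dimension-count argument for \eqref{gfc2} using finite-dimensionality of the graded pieces and surjectivity of $\Phi_G$, and for \eqref{gfc1} the identification of the degree-$2$ kernel of $\varphi_G$ with $\im(\mu_G^\vee)$ via the Sullivan--Lambe exact sequence \eqref{eq:sl}. Your final paragraph explicitly addressing why the quadratic relations must coincide with (not merely contain) $\im(\mu_G^\vee)$ is a point the paper leaves implicit, but otherwise the arguments are the same.
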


\begin{proof}
Clearly, if the group $G$ is graded-formal, then both conditions are satisfied. 

Assume now that \eqref{gfc1} holds, that is, the graded Lie algebra 
$\gr(G)$ admits a presentation of the form  
$\Lie(V)/\langle U \rangle$, where $V$ is a finite-dimensional 
$\Q$-vector space concentrated in degree $1$ and $U$ is a $\Q$-vector 
subspace of $\Lie_2(V)$.  In particular, $V=\gr_1(G)=H_1(G;\Q)$.  
From the exact sequence \eqref{eq:sl}, we see that the image of 
$\mu^{\vee}_G$ coincides with the kernel of the Lie bracket map 
$[\,,\,]\colon \bigwedge^2 \gr_1(G)\rightarrow \gr_2(G)$, which can 
be identified with $U$. Hence, the surjection $\varphi_G\colon \Lie(V)\surj \gr(G)$ 
induces an isomorphism $\Phi_G\colon \fh(G) \isom \gr(G)$.
 
Finally, assume \eqref{gfc2} holds. In general, 
the homomorphism $(\Phi_G)_n\colon \h_n(G)\to  \gr_n(G)$  
is an isomorphism for $n\le 2$ and an epimorphism for $n\ge 3$. 
Our assumption, together with the fact that each $\Q$-vector space 
$\fh_n(G)$ is finite-dimensional implies that all homomorphisms  
$(\Phi_G)_n$ are isomorphisms.  Therefore, the map 
$\Phi_G\colon \fh(G)\surj \gr(G)$ is an isomorphism of graded Lie algebras. 
\end{proof}

\section{Mildness and graded-formality}
\label{sect:mildness}

We start this section with the notion of mild (or inert) presentation 
of a group, due to J.~Labute and D.~Anick, and its relevance to the 
associated graded Lie algebra.  We then continue with some applications to 
two important classes of finitely presented groups:  
one-relator groups and fundamental groups of link complements. 

\subsection{Mild presentations}
\label{subsec:mild pres}
Let $F$ be a free group generated by
$\mathbf{x}=\{x_1,\dots,x_n\}$.  The \emph{weight}\/ of a word 
$r\in F$ is defined as $\omega(r)=\sup\{k\mid r\in \Gamma_k F\}$. 
Since $F$ is residually nilpotent, $\omega(r)$ is finite. 
The image of $r$ in $\gr_{\omega(r)}(F)$ is called the 
\textit{initial form}\/ of $r$, and is denoted by $\ini(r)$. 

Let $G=F/R$ be a quotient of $F$, with presentation 
$G=\langle \bx \mid \br \rangle$, where 
$\mathbf{r}=\{r_1,\dots,r_m\}$.  Let $\ini (\mathbf{r})$ be the 
ideal of the free $\Q$-Lie algebra $\Lie(\mathbf{x})$ 
generated by $\{\ini(r_1),\dots,\ini(r_m) \}$. Clearly, this 
is a homogeneous ideal; thus, the quotient  
\begin{equation}
\label{eq:liekg}
L(G):=\Lie(\mathbf{x})/\ini(\mathbf{r})
\end{equation}
is a graded Lie algebra. 
As noted by Labute in \cite{Labute85}, the ideal $\ini(\mathbf{r})$ 
is contained in $\gr^{\widetilde{\Gamma}}(R)$, where 
${\widetilde{\Gamma}}_k R= \Gamma_k{F}\cap R$ is the induced 
filtration on $R$. Hence, there exists an epimorphism $L(G)\surj \gr(G)$. 

\begin{prop}
\label{prop:labute comm}
Let $G$ be a commutator-relators group, and let $\fh(G)$ be its 
holonomy Lie algebra.  Then the canonical projection 
$\Phi_G\colon \fh(G)\surj \gr(G)$ factors through an epimorphism 
$\fh(G)\surj L(G)$.
\end{prop}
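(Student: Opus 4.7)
The plan is to make the definitions coincide degree-by-degree, then pass to ideals.

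First I would fix presentations. Since $G$ is a commutator-relators group, Corollary~\ref{Cor:holo commrel} gives
\[
\fh(G)=\Lie(\bx)/\mathfrak{a},\qquad \mathfrak{a}=\ideal\Big\{\sum_{i<j}\varepsilon_{i,j}(r)[x_i,x_j] : r\in\br\Big\},
\]
while by definition
\[
L(G)=\Lie(\bx)/\mathfrak{b},\qquad \mathfrak{b}=\ideal\{\ini(r):r\in\br\}.
\]
Both are quotients of the same free Lie algebra $\Lie(\bx)$, so it suffices to show the containment of ideals $\mathfrak{a}\subseteq\mathfrak{b}$; this will give the canonical surjection $\fh(G)\surj L(G)$. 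Combined with Labute's epimorphism $L(G)\surj\gr(G)$, a final check in degree one will identify the composition with $\Phi_G$.

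To prove $\mathfrak{a}\subseteq\mathfrak{b}$, I would argue generator-by-generator. Fix $r\in\br$; since $G$ is a commutator-relators group, $r\in\Gamma_2 F$, hence $\omega(r)\ge 2$. I split on $\omega(r)$:

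\textbf{Case $\omega(r)=2$.} Under the classical identification $\gr_2(F)\otimes\Q\cong \Lie_2(\bx)$ (coming from the Magnus embedding and the fact that $\gr(F)\otimes\Q\cong\Lie(\bx)$), the initial form $\ini(r)\in\gr_2(F)\otimes\Q$ corresponds to the primitive element $M_2(r)\in T_2(F_\Q)$. By formula~\eqref{eq:M2}, this primitive element is precisely $\sum_{i<j}\varepsilon_{i,j}(r)[x_i,x_j]$. So the generator of $\mathfrak{a}$ coming from $r$ equals the generator of $\mathfrak{b}$ coming from $r$.

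\textbf{Case $\omega(r)\ge 3$.} Then $r\in\Gamma_3 F$, so $\varepsilon_I(r)=0$ for all $|I|<3$, and in particular $\varepsilon_{i,j}(r)=0$ for every pair $i<j$. Hence the generator of $\mathfrak{a}$ coming from $r$ is the zero element, which is trivially in $\mathfrak{b}$, while $\ini(r)$ contributes a genuine higher-degree relation to $\mathfrak{b}$.

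In either case every generator of $\mathfrak{a}$ lies in $\mathfrak{b}$, so $\mathfrak{a}\subseteq \mathfrak{b}$ and the induced epimorphism $\pi\colon \fh(G)\surj L(G)$ exists.

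Finally, to see that $\Phi_G$ factors as $\fh(G)\xrightarrow{\pi}L(G)\surj\gr(G)$, observe that all three Lie algebras are generated in degree~$1$, and all the maps involved restrict to the identity on $H_1(G;\Q)\cong\gr_1(F)\otimes\Q=(\Lie(\bx))_1$ (for $\Phi_G$ this is part of Lemma~\ref{lem:holoepi}; for $\pi$ and the Labute map this is built into the construction). A morphism of graded Lie algebras generated in degree $1$ is determined by its degree-$1$ restriction, so the two maps $\fh(G)\to\gr(G)$ agree.

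The only subtle point is the identification used in the $\omega(r)=2$ case, matching $\ini(r)$ with $\sum_{i<j}\varepsilon_{i,j}(r)[x_i,x_j]$; once one invokes the standard fact that the Magnus map identifies $\gr_k(F)\otimes\Q$ with the primitive elements in $T_k(F_\Q)$, i.e.\ with $\Lie_k(\bx)$, this is immediate from~\eqref{eq:M2}. Everything else is a bookkeeping comparison of generators of two homogeneous ideals.
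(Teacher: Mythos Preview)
Your proof is correct and follows essentially the same approach as the paper: both arguments use Corollary~\ref{Cor:holo commrel} to present $\fh(G)=\Lie(\bx)/\mathfrak{a}$ and then show $\mathfrak{a}\subseteq\ini(\br)$ by comparing the degree~$2$ part of $M(r)-1$ (which generates $\mathfrak{a}$) with $\ini(r)$ (the lowest-degree part of $M(r)-1$). The paper compresses your two cases into the single observation that the degree~$2$ part of $M(r)-1$ is automatically contained in the ideal generated by the lowest-degree part, whereas you make the case split $\omega(r)=2$ versus $\omega(r)\ge 3$ explicit and also verify that the resulting composite equals $\Phi_G$---a point the paper leaves implicit.
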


\begin{proof}
Let $G=\langle \bx \mid \br\rangle$ be a commutator-relators presentation 
for our group. By Corollary \ref{Cor:holo commrel}, the holonomy Lie 
algebra $\fh(G)$ admits a presentation of the form 
$\Lie(\bx)/\mathfrak{a}$, where $\mathfrak{a}$ is 
the ideal generated by the degree $2$ part of $M(r)-1$, for all $r\in 
\br$. On the other hand, $\ini(r)$ is the smallest degree homogeneous part 
of $M(r)-1$. Hence, $\mathfrak{a}\subseteq \ini(\br)$,  
and this complete the proof. 
\end{proof}

Following \cite{Anick87,Labute85}, we say that 
a group $G$ is a \textit{mildly presented group}\/ (over $\Q$) if it admits 
a presentation $G=\langle \mathbf{x}\mid \mathbf{r}\rangle$ such that the 
quotient $\ini(\mathbf{r})/[\ini(\mathbf{r}), \ini(\mathbf{r})]$, 
viewed as a $U(L(G))$-module via the adjoint representation of $L(G)$, 
is a free module on the images of $\ini(r_1),\dots,\ini(r_m)$.
As shown by Anick in \cite{Anick87}, a presentation 
$G=\langle x_1,\dots , x_n\mid  r_1, \dots r_m\rangle$ is mild if and only if 
\begin{equation}
\label{eq:anick criterion}
\Hilb(U(L(G)),t)= \left(1-nt+\sum_{i=1}^mt^{\omega(r_i)}\right)^{-1}.
\end{equation}

\begin{theorem}[Labute \cite{Labute70, Labute85}]
\label{thm: Labute85}
Let $G$ be a finitely-presented group. 
\begin{enumerate}
\item \label{mild1}
If $G$ is mildly presented, then $\gr(G)=L(G)$.  
\item  \label{mild2}
If $G$ has a single relator $r$, then $G$ is mildly presented.
Moreover, the LCS ranks $\phi_k(G):=\rank \gr_k(G)$ 
are given by
\begin{equation}
\label{eq:labutephi}
\phi_k(G)=\dfrac{1}{k}\sum_{d|k}\mu(k/d)\left[
\sum_{0\leq i\leq[d/e]} (-1)^i\dfrac{d}{d+i-ei}\, \binom{d+i-ie}{i}\, n^{d-ei}\right],
\end{equation}
where $\mu$ is the M\"{o}bius function and $e=\omega(r)$.
 \end{enumerate}
 \end{theorem}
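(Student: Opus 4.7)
The plan is to derive both parts from a comparison of Hilbert series, using as main engine Quillen's theorem that, with respect to the $I$-adic filtration on $\Q G$, the associated graded ring satisfies $\gr(\Q G)\cong U(\gr(G))$, together with Anick's criterion \eqref{eq:anick criterion} for mildness.

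First I would set up the comparison. For any presentation $G=F/R$ with $\br=\{r_1,\dots,r_m\}$, the Magnus identification $\gr(\Q F)\cong T(F_\ab)$ sends $\ini(r_i)$ into the kernel of $T(F_\ab)\twoheadrightarrow\gr(\Q G)$, yielding a natural surjection of graded algebras
\[
T(F_\ab)/\ideal(\ini(r_1),\dots,\ini(r_m)) \twoheadrightarrow \gr(\Q G).
\]
By the definition of $L(G)=\Lie(\bx)/\ini(\br)$ together with the PBW theorem, the left-hand algebra is canonically isomorphic to $U(L(G))$, so this is in fact a surjection $U(L(G))\twoheadrightarrow U(\gr(G))$. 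Another application of PBW on each side shows that the underlying Lie map $L(G)\twoheadrightarrow\gr(G)$ is an isomorphism if and only if the enveloping-algebra map is, which in turn is equivalent to an equality of Hilbert series in each graded degree.

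For part \eqref{mild1}, the mildness hypothesis, combined with Anick's criterion \eqref{eq:anick criterion}, immediately gives
\[
\Hilb(U(L(G)),t)=\Bigl(1-nt+\sum_{i=1}^m t^{\omega(r_i)}\Bigr)^{-1}.
\]
On the other hand, applying the $I$-adic filtration to the (Fox) free $\Q G$-resolution coming from the presentation $2$-complex $K_G$ furnishes a complex of free graded $\gr(\Q G)$-modules whose Euler characteristic yields the upper bound $\Hilb(\gr(\Q G),t)\le (1-nt+\sum t^{\omega(r_i)})^{-1}$. Since we already have a surjection $U(L(G))\twoheadrightarrow\gr(\Q G)$, equality is forced in every degree, and the claim follows.

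For part \eqref{mild2}, the point is that one-relator presentations are automatically mild. With $m=1$ the $U(L(G))$-module $\ini(r)/[\ini(r),\ini(r)]$ is cyclic on $\ini(r)$, so freeness reduces to showing that $\ini(r)$ is not a zero-divisor in $U(L(G))$; this is the main obstacle, and it is handled via the Magnus-Lazard description of free Lie algebras together with the fact that $T(F_\ab)$ is a domain, yielding injectivity of the relevant multiplication map. Once mildness is established, part \eqref{mild1} gives $\gr(G)=L(G)$ and hence $\Hilb(U(\gr(G)),t)=(1-nt+t^e)^{-1}$ with $e=\omega(r)$. Comparing this with the PBW identity $\Hilb(U(\gr(G)),t)=\prod_{k\ge 1}(1-t^k)^{-\phi_k(G)}$, taking logarithms, expanding
\[
-\log(1-nt+t^e)=\sum_{d\ge 1}\frac{1}{d}(nt-t^e)^d=\sum_{d\ge 1}\frac{1}{d}\sum_{0\le i\le d/e}(-1)^i\binom{d}{i}n^{d-i}t^{d+(e-1)i},
\]
and applying Möbius inversion after re-indexing by the exponent of $t$ then produces the closed formula \eqref{eq:labutephi}.
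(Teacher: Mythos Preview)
The paper does not prove this theorem at all: it is quoted as a result of Labute, with only the remark that ``Labute states this theorem over $\Z$, but his proof works for any commutative PID with unity.'' So there is no proof in the paper to compare your proposal against.

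That said, two points in your sketch deserve attention. First, in part~\eqref{mild1} the inequality you extract from the filtered Fox resolution goes the \emph{other} way. The surjection $U(L(G))\twoheadrightarrow \gr(\Q G)$ already gives the coefficientwise bound $\Hilb(\gr(\Q G),t)\le \Hilb(U(L(G)),t)=(1-nt+\sum_i t^{\omega(r_i)})^{-1}$ (the last equality by Anick's criterion). What the Euler-characteristic/Golod--Shafarevich argument supplies is the \emph{lower} bound $\Hilb(\gr(\Q G),t)\cdot(1-nt+\sum_i t^{\omega(r_i)})\ge 1$, coming from the partial free resolution of $\Q$ over $\gr(\Q G)$; it is the combination of these two opposite inequalities that forces equality. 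As written, your two inputs point in the same direction and do not pin anything down.

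Second, in part~\eqref{mild2} the reduction of mildness for a single relator to ``$\ini(r)$ is not a zero-divisor in $U(L(G))$'' is not quite the right statement, and your justification (``$T(F_{\ab})$ is a domain'') addresses the wrong ring. What must be shown is that the $U(L(G))$-module $\fa/[\fa,\fa]$, with $\fa=\ini(r)$ and the module structure coming from the \emph{adjoint} action, is free on the class of $\ini(r)$; equivalently, that no nonzero element of $U(L(G))$ annihilates $\overline{\ini(r)}$ under the adjoint action modulo $[\fa,\fa]$. This does not follow from $T(F_{\ab})$ being a domain, since the relevant action and the relevant ring are both on the quotient side. Labute's argument in \cite{Labute70} handles this via a careful analysis of the Lie ideal generated by a single homogeneous element in a free Lie algebra; your sketch does not yet contain that step. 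The derivation of formula~\eqref{eq:labutephi} from the Hilbert series via PBW, logarithms, and M\"obius inversion is correct.
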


Labute states this theorem over $\Z$, but his proof works for any commutative 
PID with unity.  There is an example in \cite{Labute85} showing that the mildness 
condition is crucial for part \eqref{mild1} of the theorem to hold. We give now a 
much simpler example to illustrate this phenomenon.

\begin{example}
Let $G=\langle x_1, x_2, x_3\mid  x_3, x_3[x_1,x_2]\rangle$. 
Clearly, $G\cong \langle x_1, x_2 \mid  [x_1,x_2]\rangle$, 
which is a mild presentation.  
However, the Lie algebra  $\Lie(x_1,x_2,x_3)/{\rm ideal}(x_3)$ 
is not isomorphic to
$\gr(G)=\Lie(x_1,x_2)/{\rm ideal}([x_1,x_2])$. 
Hence, the first presentation is not a mild.
\end{example}

Under different assumptions, alternative methods for computing the LCS ranks of a group $G$
can be found in \cite{Weigel15, SW-mz}.

\subsection{Mildness and graded formality}
\label{subsec:apps graded}
We now use Labute's work on the associated graded Lie algebra 
and our presentation of the holonomy Lie algebra  
to give two graded-formality criteria.  

\begin{corollary}
\label{cor:graded-1-formal}
Let $G$ be a group admitting a mild presentation 
$\langle \mathbf{x}\mid  \mathbf{r}\rangle$.  If $\omega(r)\le 2$ 
for each $r\in \mathbf{r}$, then $G$ is graded-formal.
\end{corollary}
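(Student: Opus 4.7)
The strategy is to invoke Proposition~\ref{prop:graded-formal-cir}\eqref{gfc1}, which reduces graded-formality to exhibiting a quadratic presentation of $\gr(G)$. By the mildness hypothesis and Theorem~\ref{thm: Labute85}\eqref{mild1},
\[
\gr(G)=L(G)=\Lie(\mathbf{x})/\ini(\mathbf{r}),
\]
so the task is to present this Lie algebra with generators in degree $1$ and relations in degree $2$, using the constraint $\omega(r)\le 2$ for every $r\in\mathbf{r}$.

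To that end, I would partition $\mathbf{r}=\mathbf{r}_1\sqcup\mathbf{r}_2$ by weight. For $r\in\mathbf{r}_1$, the initial form $\ini(r)$ lies in $\Lie_1(\mathbf{x})=\text{span}_{\Q}(\mathbf{x})$, while for $r\in\mathbf{r}_2$ it lies in $\Lie_2(\mathbf{x})$. Let $V_1\subseteq \text{span}_{\Q}(\mathbf{x})$ denote the subspace spanned by $\{\ini(r):r\in\mathbf{r}_1\}$, and set $V:=\text{span}_{\Q}(\mathbf{x})/V_1$. The key technical point is the standard Lie algebra isomorphism
\[
\Lie(\mathbf{x})/\langle V_1\rangle\;\cong\;\Lie(V),
\]
induced by the natural linear projection; concretely, choosing a basis of $\text{span}(\mathbf{x})$ adapted to the splitting $V_1\oplus V$ reduces the claim to the tautology that eliminating a subset of generators from a free Lie algebra yields the free Lie algebra on the remaining ones. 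Note also that rationally abelianizing $\langle \mathbf{x}\mid \mathbf{r}\rangle$ exactly kills $V_1$, so $V\cong H_1(G;\Q)$, as expected.

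Pushing the degree-$2$ initial forms through this quotient and letting $U\subset \Lie_2(V)$ denote the image of $\{\ini(r):r\in\mathbf{r}_2\}$ under $\Lie_2(\mathbf{x})\twoheadrightarrow \Lie_2(V)$, one arrives at the quadratic presentation
\[
\gr(G)\;\cong\;\Lie(V)/\ideal(U),\qquad U\subset \Lie_2(V),
\]
and Proposition~\ref{prop:graded-formal-cir}\eqref{gfc1} then immediately yields graded-formality. The main (and essentially only) substantive input is Labute's identification $\gr(G)=L(G)$ from Theorem~\ref{thm: Labute85}\eqref{mild1}; the remainder is a routine manipulation of free Lie algebras, so there is no real obstacle to speak of.
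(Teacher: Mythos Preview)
Your argument is correct and follows essentially the same route as the paper: invoke Theorem~\ref{thm: Labute85}\eqref{mild1} to obtain $\gr(G)=\Lie(\mathbf{x})/\ini(\mathbf{r})$ with $\ini(\mathbf{r})$ generated in degrees~$1$ and~$2$, eliminate the degree-$1$ relations to reach a quadratic presentation, and then apply Proposition~\ref{prop:graded-formal-cir}\eqref{gfc1}. You have simply spelled out the elimination step in more detail than the paper does.
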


\begin{proof}
By Theorem \ref{thm: Labute85}, the associated graded Lie algebra 
$\gr(H;\Q)$ has a presentation of the form $\Lie(\mathbf{x})/\ini(\mathbf{r})$, 
with $\ini(\mathbf{r})$ a homogeneous ideal generated in degrees $1$ and $2$. 
Using the degree $1$ relations to eliminate superfluous generators, 
we arrive at a presentation with only quadratic relations. 
The desired conclusion follows from Proposition \ref{prop:graded-formal-cir}.
\end{proof}

An important sufficient condition for mildness of a presentation 
was given by Anick \cite{Anick87}.
Recall that $\iota$ denotes the canonical injection 
from the free Lie algebra $\Lie(\bx)$ into $\Q \langle \bx\rangle$. 
Fix an ordering on the set $\{\bx\}$. The set of 
monomials in the homogeneous elements $\iota(\ini(r_1)),\dots,\iota(\ini(r_m))$ 
inherits the lexicographic order.
Let $w_i$ be the highest term of $\iota(\ini(r_i))$ for $1\leq i\leq m$. 
Suppose that
(i) no $w_i$ equals zero;
 (ii) no $w_i$ is a submonomial of any $w_j$ for $i\neq j$, i.e., 
$w_j=uw_iv$ cannot occur;
and (iii) no $w_i$ overlaps with any $w_j$, i.e., $w_i=uv$ and 
$w_j=vw$ cannot occur unless $v=1$, or $u=w=1$. 
Then, the set $\{r_1,\dots,r_n\}$ is mild (over $\Q$). 
We use this criterion to provide an  example of a finitely-presented 
group $G$ which is graded-formal, but not filtered-formal.
 
\begin{example}
\label{ex:quadrelation}
Let $G$ be the group with generators 
$x_1, \dots, x_4$  and relators $r_1=[x_2,x_3]$, 
$r_2=[x_1,x_4]$, and $r_3=[x_1,x_3][x_2,x_4]$.  
Ordering the generators as $x_1\succ x_2\succ x_3\succ x_4$,  
we find that the highest terms for $\{\iota(\ini(r_1)), \iota(\ini(r_2)), \iota(\ini(r_3))\}$ 
are $\{x_2x_3, x_1x_4, x_1x_3\}$, and these words satisfy the above 
conditions of Anick.  Thus, by Theorem \ref{thm: Labute85}, the Lie algebra 
$\gr(G)$ is the quotient of $\Lie(x_1,\dots,x_4)$ by the 
ideal generated by $[x_2,x_3]$, $[x_1,x_4]$, and $[x_1,x_3]+[x_2,x_4]$.
Hence, $\fh(G)\cong\gr(G)$, that is, $G$ is graded-formal.
On the other hand, using the Tangent Cone Theorem from 
\cite{DPS}, one can show that the group $G$ is not $1$-formal.  
Therefore, $G$ is not filtered-formal.
\end{example}

\subsection{The rational Murasugi conjecture}
\label{subsec:murasugi}
Let $L=(L_1,\dots , L_n)$ be an $n$-component link in $S^3$.  
The complement of the link, $X=S^3\setminus \bigcup_{i=1}^n L_i$,  
is a a connected, $3$-dimensional manifold, which has the homotopy 
type of a finite, $2$-dimensional CW-complex.   
The link group, $G=\pi_1(X)$, carries crucial information about 
the homotopy type of $X$.  For instance, if $n=1$ (i.e., the link is a knot), 
or if $n>1$ and $L$ is a not a split link, then the complement $X$ is a 
$K(G,1)$.

Let $\ell_{i,j}=\lk(L_i,L_j)$ be the linking numbers of $L$.  
The information coming from these numbers is conveniently encoded 
in a graph $\Gamma$ with vertex set $\{1,\dots ,n\}$, 
and edges $(i,j)$ whenever $\ell_{i,j} \ne 0$.
As noted in \cite{Markl-Papadima, PS-imrn}, the holonomy Lie algebra 
$\h(G)=\h(X)$ is determined by these data: 
\begin{equation}
\label{eq:hlielink}
\h(G)= \Lie(y_1,\dots ,y_n)\Big/\bigg(\sum_{j=1}^n \ell_{i,j}[y_i,y_j]=0,
\ 1\le i<n\bigg)\, .
\end{equation}
 
Turning now to the associated graded Lie algebra of a link group $G$, 
Murasugi conjectured in \cite{Murasugi} that $\gr_k(G;\Z)=\gr_k(F_{n-1};\Z)$ 
for all $k>1$, provided that the link $L$ has $n$ components, and all the linking
numbers are equal to $\pm 1$. This conjecture was proved by Massey--Traldi 
\cite{MasseyTraldi} and Labute \cite{Labute90}, who also proved an 
analogous result for the Chen ranks of such links.  In \cite{Traldi89}, 
Traldi computed the Chen groups $\gr_k(G/G'';\Z)$ for all links with 
connected linking graph.  The next theorem, which can be 
viewed as a rational version of Murasugi's conjecture, combines results 
of Anick \cite{Anick87}, Berceanu--Papadima \cite{Berceanu-Papadima94}, 
and Papadima--Suciu \cite{PS-imrn}.
 
\begin{theorem}
\label{thm:linkgroups}
Let $L$ be an $n$-component link in $S^3$, and let $G$ be the fundamental 
group of its complement.  Assuming the linking 
graph $\Gamma$ is connected, the following hold.
\begin{enumerate}
\item \label{g1} The group $G$ is graded-formal, and thus, 
the associated graded Lie algebra
$\gr(G)$ is isomorphic to the holonomy Lie algebra $\fh(G)$, 
with presentation given by \eqref{eq:hlielink}.
\item \label{g2} There exists a graded Lie algebra isomorphism 
$\gr(G/G'')\cong \fh(G)/\fh(G)''$.
\item \label{g3} 
If, furthermore, $L$ is the closure of a pure braid, then $G$ 
admits a mild presentation. 
\end{enumerate}
\end{theorem}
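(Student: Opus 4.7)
The plan is to prove the three parts in sequence, taking equation~\eqref{eq:hlielink} from Markl--Papadima and \cite{PS-imrn} as the common starting point for the holonomy Lie algebra.

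For part~(1), I would work with a Milnor-style presentation of the link group $G$, in which the $n$ meridian generators $\mathbf{x} = \{x_1,\dots,x_n\}$ satisfy $n-1$ commutator-type relations of the shape $r_i = [x_i,w_i]\cdot(\text{higher LCS terms})$, where $w_i$ is a word whose image in $F_{\ab}$ is $\sum_j \ell_{i,j} x_j$. A Fox calculus / Magnus expansion computation then gives $\ini(r_i) = \sum_j \ell_{i,j}[x_i,x_j]$, so the quadratic Lie algebra $L(G)$ from \eqref{eq:liekg} coincides with the presentation of $\fh(G)$ in \eqref{eq:hlielink}. The connectedness of $\Gamma$ enters through the Labute--Massey--Traldi analysis (see \cite{Labute90, MasseyTraldi}), which uses Milnor invariants to show that this presentation is mild. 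Theorem~\ref{thm: Labute85}(1) then gives $\gr(G) = L(G) \cong \fh(G)$, and the resulting isomorphism agrees with $\Phi_G$, since it is the identity on $H_1$. Graded formality of $G$ follows, together with the stated presentation.

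For part~(2), given graded formality of $G$, I would combine Theorem~\ref{thm:holo chen lie} (which presents $\fh(G)/\fh(G)''$ via the Magnus expansion of the defining relations) with the Chen-rank formula for link groups with connected linking graph due to Berceanu--Papadima \cite{Berceanu-Papadima94}, extending the integer computations of Traldi \cite{Traldi89}. Comparing Hilbert series degree by degree shows that the canonical epimorphism $\gr(G)/\gr(G)'' \surj \gr(G/G'')$ from Theorem~\ref{thm:chenlie} is an isomorphism; identifying $\gr(G)$ with $\fh(G)$ via part~(1) then yields the stated isomorphism $\fh(G)/\fh(G)'' \cong \gr(G/G'')$.

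For part~(3), if $L$ is the closure of a pure braid $\beta \in P_n$, the Artin presentation of $P_n$ transfers to a presentation of $G$ whose defining relations take the shape $r_i = x_i \cdot \beta(x_i)^{-1}$. Ordering the meridians as $x_1 \succ \cdots \succ x_n$, each leading monomial $w_i$ of $\iota(\ini(r_i))$ has the form $x_i x_{j(i)}$ with distinct leading factors $x_i$, and a direct check confirms that the $w_i$'s satisfy Anick's non-submonomial and non-overlap conditions recalled after Theorem~\ref{thm: Labute85}. Anick's criterion \cite{Anick87} then yields mildness of the presentation. The main obstacle throughout is part~(1): the passage from the quadratic identification of $L(G)$ with $\fh(G)$ to the equality $\gr(G) = L(G)$ requires the Labute--Massey--Traldi mildness analysis (or an equivalent Milnor-invariant argument), which is not visible from the presentation alone. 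A priori one only has an epimorphism $L(G) \surj \gr(G)$, and verifying that it is an isomorphism---equivalently, that the higher-weight terms hidden in the $r_i$'s do not introduce further relations in $\gr(G)$---is precisely what the connectedness hypothesis on $\Gamma$ ensures.
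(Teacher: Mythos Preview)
Your approach to part~(3) via Anick's combinatorial criterion is exactly what the paper does (it simply cites \cite[Thm.~3.7]{Anick87}), and your strategy for part~(2) is close in spirit to the paper's citation of \cite[Thm.~10.1]{PS-imrn}.

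The gap is in part~(1). You reduce graded formality to the claim that the Milnor-type presentation of $G$ is mild, and you attribute this to Labute \cite{Labute90} and Massey--Traldi \cite{MasseyTraldi}. But those papers establish the Murasugi conjecture only in the special case where all linking numbers are $\pm 1$; they do not yield mildness (nor the identification $\gr(G)=L(G)$) for an arbitrary link whose linking graph is merely connected. Indeed, the very structure of the theorem separates the two conclusions: graded formality is asserted for every link with connected $\Gamma$, whereas mildness is asserted only under the additional hypothesis that $L$ is a pure braid closure. If mildness were available in the generality you need for part~(1), part~(3) would be subsumed by it.

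The paper's proof of part~(1) does not go through mildness at all: it invokes Lemma~4.1 and Theorems~3.2 and~4.2 of Berceanu--Papadima \cite{Berceanu-Papadima94}, which establish graded formality for link groups with connected linking graph by a different route (via the cohomology of generic $2$-complexes and $3$-dimensional Poincar\'e duality), bypassing the question of whether any particular presentation is mild. Since your argument for part~(2) rests on the graded formality from part~(1), the gap propagates; once part~(1) is repaired by citing \cite{Berceanu-Papadima94}, the rest of your outline is sound.
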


\begin{proof}
The first assertion follows from Lemma 4.1 and Theorems 3.2 and 4.2 in 
\cite{Berceanu-Papadima94}, the second assertion  is proved in 
\cite[Thm.~10.1]{PS-imrn}, while the last assertion follows 
from  \cite[Thm.~ 3.7]{Anick87}.
\end{proof}
 
We conclude this section with several examples illustrating the concepts 
discussed above.  In each example, $L$ is a link in $S^3$, and $G$ is the 
corresponding link group. The first two examples were computed by
Hain in \cite{Hain85} using a slightly different method. 

 \begin{example}
\label{ex:borromean}
Let $L$ be the Borromean rings. 
All the linking numbers are $0$, and so $\h(G)=\Lie(x,y,z)$.  
The link group $G$ has a presentation with three generators $x,y,z$ 
and two relators, $r_1=[x,[y,z]]$ and $r_2=[z,[y,x]]$.
It is readily seen that the link $L$ passes Anick's 
mildness test; hence $G$  admits a mild presentation.  
Thus, $\gr(G)=\Lie(x,y,z)/\ideal ([x,[y,z]], [z,[y,x]])$, and   
so $G$ is not graded-formal.
\end{example}

\begin{example}
\label{ex:whitehead}
Let $L$ be the Whitehead link.  This is a $2$-component link with 
linking number $0$; its link group has presentation   
$G = \langle  x, y\mid r \rangle$, where 
$r=x^{-1}y^{-1}xyx^{-1}yxy^{-1}xyx^{-1}y^{-1}xy^{-1}x^{-1}y$.  
Since $G$ has only one relator, Theorem \ref{thm: Labute85} 
insures that this presentation is mild.
Direct computation shows that $\ini(r)=[x,[y,[x,y]]]$.   
Thus, $\gr(G)=\Lie(x,y)/\ideal ([x,[y,[x,y]]] )$,  
and $G$ is not graded-formal. 
\end{example}

\begin{example}
\label{ex:semiproduct}
Let $L$ be the link of great circles in $S^3$ corresponding to the 
arrangement of transverse planes through the origin of $\R^4$ 
denoted as $\mathcal{A}(31425)$ in Matei--Suciu \cite{MS-top}.  
By construction, $L$ is the closure of a pure braid, and its linking 
graph is a complete graph. Thus, by 
Theorem \ref{thm:linkgroups}, the link group 
$G$ is graded-formal, and admits a mild presentation.   
On the other hand, as noted in 
\cite[Example 8.2]{DPS}, the group $G$ is not $1$-formal.
\end{example}

\section{One-relator groups}
\label{sec:onerel}

We turn now to some other specific classes of finitely presented groups 
where our approach applies.  We start with a well-known and much-studied 
class of groups in group theory.

\subsection{Holonomy and graded-formality}
\label{subs:hologr}
If the group $G$ admits a finite presentation with a single relator, 
we saw in the previous section that $G$ is mildly presented.  
In fact, more can be said. 

\begin{prop}
\label{prop:1rel}
Let $G=\langle \bx\mid r\rangle$ be a $1$-relator group. 
\begin{enumerate}
\item \label{e1}
If $r$ is a commutator relator, then $\fh(G)=\Lie(\bx)/{\rm ideal}(M_2(r))$. 
\item \label{e2}
If $r$ is not a commutator relator, then  $\fh(G)=\Lie(y_1,\dots,y_{n-1})$.
\end{enumerate}
\end{prop}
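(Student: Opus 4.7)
My plan is to recognize that both parts of this proposition are essentially immediate consequences of the general presentation formula for $\fh(G)$ given in Theorem \ref{ThmholonomyLie}, combined with the echelon-presentation machinery of Section \ref{sect:ecpres}. The key observation is that when $m=1$ the two cases of the proposition correspond precisely to the two possible shapes of the augmented Jacobian $J_P=(\varepsilon_1(r),\dots,\varepsilon_n(r))$: either it is the zero row, in which case $d=\rank J_P=0$, or it is a nonzero row, in which case $d=1$. The parameter $b=n-d$ and the range $d+1,\dots,m$ of surviving relators then read off directly.

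For part \eqref{e1}, the hypothesis $r\in[F,F]$ gives $J_P=0$, so $G$ already is a commutator-relators group and no echelon reduction is needed. I would simply invoke Corollary \ref{Cor:holo commrel}, which identifies $\fh(G)$ with $\Lie(\bx)$ modulo the ideal generated by $\sum_{i<j}\varepsilon_{i,j}(r)[x_i,x_j]$. By formula \eqref{eq:M2} and the remark following it (that $M_2(r)$, as a primitive element of the tensor Hopf algebra, corresponds to $\sum_{i<j}\varepsilon_{i,j}(r)[x_i,x_j]\in\Lie(F_\Q)$), this single relator is precisely $M_2(r)$, yielding the asserted presentation.

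For part \eqref{e2}, the hypothesis that $r$ is not a commutator relator means some $\varepsilon_i(r)\neq 0$, so $J_P$ has rank $d=1$ over $\Q$. The construction of Proposition \ref{prop: basis} then produces an echelon approximation $G_e=\langle\bx\mid w_1\rangle$ (the row is already in echelon form up to reindexing via a suitable $C\in\GL(1;\Z)=\{\pm 1\}$, so one may take $w_1=r^{\pm 1}$), together with a map $f\colon K_{G_e}\to K_G$ inducing a cohomology isomorphism. By Corollary \ref{cor:holo map}, $\h(G_e)\isom\h(G)$. Now I apply Theorem \ref{ThmholonomyLie} to $G_e$: here $b=n-d=n-1$, the basis $\by=\{y_1,\dots,y_{n-1}\}$ of $H_1(K_{G_e};\Q)$ corresponds to the non-pivot columns, and the range of defining relators is $\kappa_2(w_{d+1}),\dots,\kappa_2(w_m)$ with $d+1=2>1=m$. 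This range is empty, so there are no relators at all, and $\fh(G)\cong\Lie(y_1,\dots,y_{n-1})$.

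There is no real obstacle; the only thing that requires a line of care is the bookkeeping in part \eqref{e2} that $d=1$ and $m=1$ really do make the index range $\{d+1,\dots,m\}$ empty, and that the remaining $n-1$ generators coming from the non-pivot columns form an honest basis of $H_1(G;\Q)$. Both are immediate from \S\ref{subsec:echelon} once one notes that the single relator contributes exactly one relation to the abelianization.
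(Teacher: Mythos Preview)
Your proposal is correct and follows essentially the same route as the paper: part \eqref{e1} is deduced from Corollary \ref{Cor:holo commrel}, and part \eqref{e2} from the observation that the Jacobian $(\varepsilon_i(r))$ has rank $1$ together with Theorem \ref{ThmholonomyLie}. Your write-up simply unpacks the bookkeeping (the empty index range $\{d+1,\dots,m\}$ when $d=m=1$) that the paper leaves implicit.
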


\begin{proof}
Part \eqref{e1} follows from Corollary \ref{Cor:holo commrel}.  
When $r$ is not a commutator relator, the Jacobian matrix 
$J_G=(\varepsilon(\partial_i r))$ has rank $1$.
Part  \eqref{e2} then follows from Theorem \ref{ThmholonomyLie}.
\end{proof}

\begin{corollary}
\label{cor:Hilb1rel}
Let $G=\langle x_1,\dots x_n \mid r\rangle$ be a $1$-relator group, 
and let $\h=\h(G)$.  Then  
\begin{equation}
\Hilb(U(\fh);t)=
\begin{cases}
1/(1-(n-1)t)&\text{if $\omega(r)=1$},\\
1/(1-nt+t^2)&\text{if $\omega(r)=2$},\\
1/(1-nt)&\text{if $\omega(r)\geq 3$}.
\end{cases}
\end{equation}
\end{corollary}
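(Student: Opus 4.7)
The plan is to split into three cases according to $\omega(r)$, using Proposition \ref{prop:1rel} to identify $\fh(G)$ in each case and then applying either a direct computation or Anick's formula \eqref{eq:anick criterion} (which is applicable because a one-relator group is automatically mildly presented by Theorem \ref{thm: Labute85}\eqref{mild2}).

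If $\omega(r)=1$, Proposition \ref{prop:1rel}\eqref{e2} yields $\fh(G)=\Lie(y_1,\dots,y_{n-1})$, a free Lie algebra on $n-1$ generators. Its universal enveloping algebra is the tensor algebra on $n-1$ degree-one generators, whose Hilbert series is $1/(1-(n-1)t)$. If $\omega(r)\ge 3$, then $r\in\Gamma_3 F\subset[F,F]$, so Proposition \ref{prop:1rel}\eqref{e1} applies and $\fh(G)=\Lie(\bx)/\mathrm{ideal}(M_2(r))$. Since $\varepsilon_{i,j}(r)=0$ whenever $r\in\Gamma_3 F$ (cf.\ Lemma \ref{lem:symmetry}), formula \eqref{eq:M2} forces $M_2(r)=0$, so $\fh(G)=\Lie(\bx)$ and thus $\Hilb(U(\fh);t)=1/(1-nt)$.

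The main case is $\omega(r)=2$. Here $r\in[F,F]$ is a commutator relator, so Proposition \ref{prop:1rel}\eqref{e1} gives $\fh(G)=\Lie(\bx)/\mathrm{ideal}(M_2(r))$, and now $M_2(r)\neq 0$ is a genuine quadratic element. To get the Hilbert series, the plan is to identify $\fh(G)$ with the graded Lie algebra $L(G)=\Lie(\bx)/\mathrm{ideal}(\ini(r))$ of \eqref{eq:liekg}: under the canonical identification $\gr_2(\Q F)\cong T_2(F_\Q)$, the initial form $\ini(r)$ corresponds (via the primitive element description in \S\ref{subsec:magnus}) to $M_2(r)$, so the two ideals agree and $\fh(G)\cong L(G)$. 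Since the presentation has a single relator of weight~$2$, Theorem \ref{thm: Labute85}\eqref{mild2} guarantees mildness, and Anick's criterion \eqref{eq:anick criterion} evaluates to
\[
\Hilb(U(\fh);t)=\Hilb(U(L(G));t)=\frac{1}{1-nt+t^{\omega(r)}}=\frac{1}{1-nt+t^2}.
\]

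The only delicate point is the identification $\fh(G)\cong L(G)$ in the $\omega(r)=2$ case, which boils down to checking that $\ini(r)$ and $M_2(r)$ represent the same element of $\Lie_2(\bx)$; this is a direct consequence of how the Magnus expansion recovers the associated graded, and is the place where the argument uses the particular form of $r$ beyond the abstract formulas of Proposition \ref{prop:1rel}.
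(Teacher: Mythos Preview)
Your proof is correct and follows essentially the same approach as the paper's: both use Proposition~\ref{prop:1rel} to identify $U(\fh)$ in each case, and for the crucial $\omega(r)=2$ case both invoke mildness (Theorem~\ref{thm: Labute85}\eqref{mild2}) together with Anick's formula~\eqref{eq:anick criterion}. Your version is simply more explicit about two steps the paper leaves implicit---namely, that $M_2(r)=0$ when $\omega(r)\ge 3$, and that $\ini(r)$ coincides with $M_2(r)$ when $\omega(r)=2$ so that $\fh(G)\cong L(G)$.
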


\begin{proof}
Let $\bx=\{x_1,\dots,x_n\}$. 
By Proposition \ref{prop:1rel}, the universal enveloping algebra $U(\fh)$ 
is isomorphic to either $T(y_1,\dots,y_{n-1})$ if $\omega(r)=1$, or to 
$T(\bx)/\ideal(M_2(r))$ if $\omega(r)=2$, or to  $T(\bx)$ if  
$\omega(r)\geq 3$. The claim now follows from 
Theorem \ref{thm: Labute85} and formula \eqref{eq:anick criterion}.
\end{proof}

\begin{theorem}
\label{thm:1rel}
Let $G=\langle \mathbf{x}\mid r\rangle$ be a group defined 
by a single relation. Then $G$ is graded-formal if and only if 
$\omega(r)\le 2$.
\end{theorem}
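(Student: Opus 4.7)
The plan is to split into three cases based on $e := \omega(r)$, compute $\fh(G)$ explicitly via Proposition \ref{prop:1rel}, compute $\gr(G)$ via Labute's mildness theorem (Theorem \ref{thm: Labute85}), and compare them through the canonical surjection $\Phi_G\colon \fh(G)\surj \gr(G)$ of Lemma \ref{lem:holoepi}. Since any one-relator presentation is automatically mild, throughout we will have $\gr(G)\cong L(G) = \Lie(\mathbf{x})/\ini(r)$, the free Lie algebra on $\mathbf{x}$ modulo a single homogeneous element of degree $e$.

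For $e=1$, the relator is not a commutator, so Proposition \ref{prop:1rel}\eqref{e2} gives $\fh(G) = \Lie(y_1,\dots,y_{n-1})$; on the graded side, $\ini(r)$ is a nonzero linear element of $\Lie(\mathbf{x})$, and a change of basis eliminating it also produces $\gr(G) \cong \Lie(y_1,\dots,y_{n-1})$, so $\Phi_G$ is an isomorphism. For $e=2$, we have $r\in[F,F]\setminus\Gamma_3 F$, and Proposition \ref{prop:1rel}\eqref{e1} gives $\fh(G) = \Lie(\mathbf{x})/\ideal(M_2(r))$. From \S\ref{subsec:magnus}, the primitive element $M_2(r)\in T_2(F_\Q)$ corresponds to $\sum_{i<j}\varepsilon_{i,j}(r)[x_i,x_j]\in\Lie_2(F_\Q)$, and this is exactly the initial form $\ini(r)$ under Magnus's classical identification $\gr(F;\Q)\cong\Lie(F_\Q)$; hence again $\fh(G)\cong\gr(G)$, completing the ``if'' direction.

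For the converse, suppose $e\geq 3$, so $r\in\Gamma_3 F$. All iterated augmentations $\varepsilon_{i,j}(r)$ then vanish (the same classical fact invoked in Lemma \ref{lem:symmetry}), hence $M_2(r)=0$, and Proposition \ref{prop:1rel}\eqref{e1} forces $\fh(G) = \Lie(\mathbf{x})$, free on $n$ generators. By contrast, $\gr(G)$ carries a nontrivial relation in degree $e$, so $\dim \fh_e(G) > \dim \gr_e(G)$ and $\Phi_G$ cannot be an isomorphism. A cleaner equivalent diagnostic is to compare Hilbert series: Corollary \ref{cor:Hilb1rel} gives $\Hilb(U(\fh(G));t) = 1/(1-nt)$, whereas Anick's formula \eqref{eq:anick criterion} applied to the mild presentation yields $\Hilb(U(\gr(G));t) = 1/(1-nt+t^e)$, and these two series visibly differ.

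The one conceptual subtlety is the identification of $M_2(r)$ with $\ini(r)$ when $e=2$ and its vanishing when $e\geq 3$; both are immediate from Magnus's description of $\gr(F;\Q)$ as the free Lie algebra $\Lie(F_\Q)$ and from Lemma \ref{lem:symmetry}, so no additional technical input should be required beyond what the paper has already established.
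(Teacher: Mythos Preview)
Your proof is correct and follows essentially the same route as the paper's own argument: both invoke mildness of one-relator presentations to get $\gr(G)\cong\Lie(\mathbf{x})/\ideal(\ini(r))$, then compare with the description of $\fh(G)$ from Proposition~\ref{prop:1rel}, splitting on the value of $\omega(r)$. You separate the cases $e=1$ and $e=2$ (which the paper treats jointly as $\omega(r)\le 2$) and add the Hilbert-series comparison as an alternative diagnostic for $e\ge 3$, but these are only expository differences.
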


\begin{proof}
By Theorem \ref{thm: Labute85}, the given presentation of $G$ is mild. 
The weight $\omega(r)$ can also be computed as  
$\omega(r)=\inf\{\abs{I} \mid M(r)_I\neq 0\}$.  
If $\omega(r)\leq 2$, then, by Proposition \ref{prop:1rel}, 
we have that 
$\fh(G)\cong \gr(G)\cong \Lie(\mathbf{x})/{\rm ideal}(\ini(r))$, 
and so $G$ is graded-formal. 

On the other hand, if $\omega(r)\geq 3$, then $\fh(G)=\Lie(\mathbf{x})$.  
However, $\gr(G)=\Lie(\mathbf{x})/{\rm ideal}(\ini(r))$. 
Hence, $G$ is not graded-formal. 
\end{proof}

\begin{example}
\label{ex:omega3}
Let $G=\langle x_1, x_2\mid r\rangle$, where $r=[x_1,[x_1,x_2]]$. 
Clearly,  $\omega(r)=3$. Hence, $G$ is not graded-formal.
\end{example}

The next example shows that there exists a graded-formal group 
which is not filtered-formal.

\begin{example}
\label{ex:1relnot1formal}
Let $G=\langle x_1,\dots,x_5\mid w\rangle$, where $w=[x_1,x_2][x_3,[x_4,x_5]]$. 
Since $\omega(w)=2$, Theorem \ref{thm:1rel} implies that the group $G$ is graded-formal.
On the other hand,  as shown in \cite{SW-formality}, $G$ is not $1$-formal, and so 
$G$ is not filtered-formal. 
\end{example}

\subsection{Chen ranks}
\label{subsec:chen ranks}

We now determine the ranks of the (rational) Chen Lie algebra  associated 
to an arbitrary finitely presented, $1$-relator, $1$-formal group, thereby extending  
a result of Papadima and Suciu from \cite{PS-imrn}. 
By definition, the {\em Chen ranks}\/ of a finitely generated group $G$ 
are the LCS ranks of its maximal metabelian quotient, 
\begin{equation}
\label{eq:theta}
\theta_k(G):=\dim_{\Q} (\gr_k(G/G^{\prime\prime})).
\end{equation}

The projection $\pi\colon G\surj G/G^{\prime\prime}$ induces 
an epimorphism, $\gr(\pi) \colon \gr(G)\surj \gr(G/G^{\prime\prime})$, 
which is an isomorphism in degrees $k\le 3$.  Consequently, $\phi_k(G)\ge \theta_k(G)$, 
with equality for $k\le 3$. The Chen ranks were introduced and 
studied by K.-T. Chen \cite{Chen51}, who showed that, 
for all $k\ge 2$, 
\begin{equation}
\label{eq:chenrankF}
\theta_k(F_n)=(k-1)\binom{n+k-2}{k}. 
\end{equation}

The {\em holonomy Chen ranks}\/ of the group $G$ are defined as 
$\bar\theta_k(G):= \dim (\fh/\fh'')_k$, where $\h=\h(G)$. 
It is readily seen that $\bar\theta_k(G)\ge \theta_k(G)$, 
with equality for $k\le 2$.   A basic 
result in the subject reads as follows:  If $G$ is $1$-formal, then 
\begin{equation}
\label{eq:chen equal}
\theta_k(G)= \bar\theta_k(G),
\end{equation}
for all $k\geq 1$.  This result was proved in \cite[Cor.~9.4]{PS-imrn} 
for $1$-formal groups admitting a finite, 
commutator-relators presentation, and in full generality in 
 \cite[Prop.~8.1 and Cor.~8.6]{SW-mz}.
 
\begin{prop}
\label{prop:1rel-chen}
Let $G=F/\langle r\rangle$ be a one-relator group, where 
$F=\langle x_1,\dots , x_n\rangle$, 
and suppose $G$ is $1$-formal. Then
\[
\Hilb(\gr(G/G^{\prime\prime}),t)=
\begin{cases}
1+nt-\dfrac{1-nt+t^2}{(1-t)^n} & \text{if $r\in [F,F]$},
\\[8pt]
1+(n-1)t-\dfrac{1-(n-1)t}{(1-t)^{n-1}} & \text{otherwise}.
\end{cases}
\]
\end{prop}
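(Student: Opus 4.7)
The plan is to use the $1$-formality of $G$ together with Corollary~\ref{cor:chenlie-formal} (applied at $i=2$) to identify $\gr(G/G'')$ with $\fh/\fh''$, where $\fh=\fh(G)$, and then to exploit the explicit description of $\fh$ given by Proposition~\ref{prop:1rel}. Since $G$ is $1$-formal, it is in particular graded-formal, so by Theorem~\ref{thm:1rel} a commutator relator $r\in[F,F]$ is forced to have $\omega(r)=2$. I therefore split into two cases according to whether $r$ lies in $[F,F]$.

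If $r\notin [F,F]$, then $\fh\cong \Lie(y_1,\dots,y_{n-1})$ is free of rank $b_1(G)=n-1$, so $\fh/\fh''$ is the free metabelian Lie algebra $M(n-1)$ on $n-1$ generators. Its Hilbert series is obtained directly from Chen's classical formula~\eqref{eq:chenrankF}; a routine summation using the negative binomial identity $\sum_{k\ge 2}(k-1)\binom{m+k-2}{k}t^k = 1-(1-mt)/(1-t)^m$ gives, with $m=n-1$,
\[
\Hilb(M(n-1),t)=1+(n-1)t-\frac{1-(n-1)t}{(1-t)^{n-1}},
\]
matching the claim.

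If $r\in[F,F]$, then $\omega(r)=2$ and $\fh\cong \Lie(V)/\ideal(q)$ with $V=H_1(G;\Q)\cong\Q^n$ and $q=M_2(r)\ne 0$ in $\Lambda^2 V$. Set $M(n)=\Lie(V)/\Lie(V)''$. The quotient $\Lie(V)\surj\fh$ induces a surjection $M(n)\surj\fh/\fh''$ whose kernel is the image of $\ideal(q)$ in $M(n)$. Because $M(n)$ is metabelian and $q\in M(n)'$, any bracket of $q$ against an element of $M(n)'$ vanishes; moreover, the operators $\ad(v)$, $v\in V$, commute on $M(n)'$ by Jacobi together with metabelianness, so the adjoint action extends to make $M(n)'$ a $\Sym(V)$-module. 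Consequently this kernel equals the cyclic submodule $\Sym(V)\cdot q\subset M(n)'$, and
\[
\Hilb(\fh/\fh'',t)=\Hilb(M(n),t)-\Hilb(\Sym(V)\cdot q,t).
\]

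The key step is to show $\Sym(V)\cdot q$ is free cyclic of rank one, so that $\Hilb(\Sym(V)\cdot q,t)=t^2/(1-t)^n$. For this I would prove $M(n)'$ is torsion-free over $\Sym(V)$ by identifying it with the first Koszul syzygy $\ker(\partial_1\colon \Sym(V)\otimes V\to \Sym(V))$: the assignment $[x_i,x_j]\mapsto x_i\otimes x_j-x_j\otimes x_i$ extends to a $\Sym(V)$-linear map which is well-defined by Jacobi and commutativity of $\Sym(V)$, lands in $\ker(\partial_1)$, agrees with the Koszul differential $\partial_2$ applied to $1\otimes x_i\wedge x_j$, and is surjective by exactness of the Koszul complex; a Hilbert series comparison (using Chen's formula for $M(n)$) then forces bijectivity. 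Since $\ker(\partial_1)\subset \Sym(V)\otimes V$ is a submodule of a free $\Sym(V)$-module, it is torsion-free, and therefore $\Sym(V)\cdot q$ is free cyclic with generator in degree $2$. Combining this with Chen's formula $\Hilb(M(n),t)=1+nt-(1-nt)/(1-t)^n$ gives
\[
\Hilb(\fh/\fh'',t)=1+nt-\frac{1-nt+t^2}{(1-t)^n},
\]
as desired. The main obstacle is the torsion-freeness of $M(n)'$; once the Koszul-syzygy identification is set up this becomes formal, but verifying that the natural map from $M(n)'$ to $\ker(\partial_1)$ is a genuine isomorphism (rather than merely matching on Hilbert series) is the one step requiring care.
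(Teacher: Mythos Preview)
Your argument is correct. For the case $r\notin[F,F]$ you proceed exactly as the paper does: identify $\fh(G)$ with the free Lie algebra on $n-1$ generators and invoke Chen's formula \eqref{eq:chenrankF} together with the $1$-formality identification $\theta_k(G)=\bar\theta_k(G)$.

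For the case $r\in[F,F]$ your route genuinely differs from the paper's. The paper simply cites \cite[Thm.~7.3]{PS-imrn} for the formula, whereas you supply a self-contained computation: you observe that $1$-formality forces $\omega(r)=2$ via Theorem~\ref{thm:1rel}, so $q=M_2(r)\ne 0$; you then realize $M(n)'=\Lie(V)'/\Lie(V)''$ as the first Koszul syzygy module $\ker(\partial_1)\subset \Sym(V)\otimes V$, which is torsion-free over $\Sym(V)$, whence the ideal image of $q$ is free cyclic with Hilbert series $t^2/(1-t)^n$. This buys you independence from the external reference and makes transparent \emph{why} the correction term is exactly $t^2/(1-t)^n$; the price is the extra verification that your map $M(n)'\to\ker(\partial_1)$ is an isomorphism, which you handle correctly (surjectivity from Koszul exactness, injectivity from the Hilbert-series match $1-(1-nt)/(1-t)^n$ on both sides). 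The paper's approach is shorter but opaque on this point; yours recovers the content of the cited result in situ.
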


\begin{proof}
First assume that $r\in [F,F]$. The claim 
is then proved in \cite[Thm.~7.3]{PS-imrn}. 

Now assume that $r\notin [F,F]$. In that case, Theorem \ref{ThmholonomyLie} 
implies that $\fh(G)\cong \Lie(y_1,\dots, y_{n-1})$, which in turn 
is isomorphic to $\fh(F_{n-1})$. 
Since both $G$ are $F_{n-1}$ is $1$-formal, formula \eqref{eq:chen equal} 
implies that 
\begin{equation}
\label{eq:chen stuff}
\theta_k(G)=\bar\theta_k(G)=\bar\theta_k(F_{n-1})=\theta_k(F_{n-1}).
\end{equation}
The claim then follows from Chen's formula \eqref{eq:chenrankF}.
\end{proof}

\subsection{Surface groups}
\label{subsec:surf gp}
The Riemann surface $\Sigma_g$ is a compact K\"{a}hler manifold, 
and thus, a formal space.   The formality of $\Sigma_g$ 
also implies the $1$-formality of $\Pi_g$.  As a consequence, the associated 
graded Lie algebra $\gr(\Pi_g)$ is isomorphic to the holonomy Lie 
algebra $\fh(\Pi_g)$, which has a presentation
$\fh(\Pi_g) = \Lie(2g )\Big/\Big\langle \sum_{i=1}^{g} [x_i,y_i]=0 \Big\rangle$,
where $\Lie(2g):=\Lie(x_1, y_1,\dots, x_g, y_g)$.  
It follows from formula \eqref{eq:labutephi} that 
the LCS ranks of the $1$-relator group 
$\Pi_g$ are given by 
\begin{equation}
\label{eq:lcsRiemann}
\phi_k(\Pi_g)=\dfrac{1}{k}\sum_{d|k}\mu(k/d)\left[\sum_{i=0}^{[d/2]} 
(-1)^i\dfrac{d}{d-i}\binom{d-i}{i} (2g)^{d-2i}\right].
\end{equation}
Using now Theorem \ref{thm:holo chen lie}, we see that the 
Chen Lie algebra of $\Pi_g$ has presentation 
\begin{equation}
\label{eq:chenRiemann}
\gr \big(\Pi_g/\Pi_g''\big)=\Lie(2g)\Big/ 
\Big\langle 
\Big\langle \sum_{i=1}^{g} [x_i,y_i] \Big\rangle+ \Lie''(2g) 
\Big\rangle. 
\end{equation}
Furthermore, Proposition \ref{prop:1rel-chen} shows that the Chen 
ranks of our surface group are given by $\theta_1(\Pi_g)=2g$, 
$\theta_2(\Pi_g)=2g^2-g-1$, and 
\begin{equation}
\label{eq:chenRiemann ranks}
\theta_k(\Pi_g)=(k-1)\binom{2g+k-2}{k}-\binom{2g+k-3}{k-2}, \: \textrm{ for } k\geq 3.
\end{equation} 
 
Let $N_h$ be the nonorientable surface of genus $h\geq 1$. 
It is well known that $N_h$ has the rational homotopy type of
a wedge of $h-1$ circles, see \cite[Example 6.18]{DPS}.
Hence, $N_h$ is a formal space, and thus $\pi_1(N_h)$ is a $1$-formal group.
Furthermore,  Proposition \ref{prop:1rel}
shows that the holonomy Lie algebra of $\pi_1(N_h)$ is isomorphic to 
the free Lie algebra with $h-1$ generators, and 
Proposition \ref{prop:1rel-chen} implies that the Chen ranks 
of $\pi_1(N_h)$ are given by $\theta_k(\pi_1(N_h))=(k-1)\binom{h+k-3}{k}$ 
for $k\geq 2$.

\section{Seifert fibered spaces}
\label{sec:seifert mfd}

We will consider here only orientable, closed Seifert manifolds 
with orientable base.   Every such manifold $M$ admits an effective circle action, 
with orbit space an orientable surface of genus $g$, and finitely many 
exceptional orbits, encoded in pairs of coprime integers $(\alpha_1,\beta_1), 
\dots,  (\alpha_s,\beta_s)$ with $\alpha_j\ge 2$. The obstruction to trivializing 
the bundle $\eta\colon M\to \Sigma_g$ outside tubular neighborhoods of the exceptional 
orbits is given by an integer $b=b(\eta)$.  A standard presentation for the 
fundamental group of $M$ in terms of the Seifert invariants is given by
\begin{equation}
 \begin{split}
 \pi_{\eta}:=\pi_1(M)&=\big\langle x_1, y_1,\dots, x_g, y_g, z_1,\dots,z_s, h\mid  
 h ~\textrm{central}, \\
&[x_1,y_1]\cdots[x_g,y_g]z_1\cdots z_s=h^{b},\: 
 z_i^{\alpha_i}h^{\beta_i}=1\: (i=1,\dots,s) \big\rangle.
 \end{split}
 \end{equation}
As shown by Scott in \cite{Scott83}, the Euler number $e(\eta)$ of  the 
Seifert bundle $\eta \colon M\rightarrow \Sigma_g$ satisfies
$e(\eta)=-b(\eta)-\sum_{i=1}^{s}\beta_i/\alpha_i$.

\subsection{Holonomy Lie algebra} 
\label{subsec:holo seifert}
 
We now give a presentation for the holonomy Lie algebra of a Seifert 
manifold group. 

\begin{theorem}
\label{thm:seifertholo}
Let $\eta\colon M\to \Sigma_g$ be a Seifert fibration with orientable base.   
The rational holonomy Lie algebra of the group $\pi_{\eta}=\pi_1(M)$ 
is given by
\[
\fh(\pi_{\eta};\Q)=
\begin{cases}
\Lie(x_1,y_1,\dots,x_g,y_g,h )/\langle 
\sum_{i=1}^{s} [x_i,y_i]=0, h {~\rm central} \rangle 
 & \text{if $e(\eta)=0$};
 \\
\Lie( 2g)
 & \text{if $e(\eta)\ne 0$}.
\end{cases}
\]
 \end{theorem}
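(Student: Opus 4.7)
By Lemma \ref{lem:holo inv}, $\fh(\pi_\eta;\Q)$ is isomorphic to the holonomy Lie algebra of the presentation $2$-complex, so my plan is to apply Theorem \ref{ThmholonomyLie} to an echelon refinement of the given presentation of $\pi_\eta$.

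First I would abelianize. The relations $z_j^{\alpha_j}h^{\beta_j}=1$ force $\alpha_j[z_j]+\beta_j[h]=0$, and combining with $r_0$ yields $e(\eta)[h]=0$, where $e(\eta)=-b-\sum_{j}\beta_j/\alpha_j$. Hence, when $e(\eta)\neq 0$ we have $[h]=[z_j]=0$ and $b_1=2g$, with basis $\{X_i,Y_i\}$; when $e(\eta)=0$ the class $[h]$ survives and $b_1=2g+1$, with basis $\{X_i,Y_i,H\}$. A short determinant computation shows the rank of the Jacobian is $s+1$ in the first case and $s$ in the second.

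Next I would identify the non-pivot rows of an echelon form. The Jacobian rows of the centrality relators $[h,x_i]$, $[h,y_i]$, $[h,z_j]$ are automatically zero, so these $2g+s$ commutators are always non-pivot. When $e(\eta)=0$, the rank drops by one, contributing one additional non-pivot relator $w^{\ast}$ obtained as a suitable integer linear combination of powers of $r_0,r_1,\dots,r_s$ whose total abelianization vanishes. I would then invoke Theorem \ref{ThmholonomyLie} to describe $\fh(\pi_\eta;\Q)$ and compute $\kappa_2$ of each of these non-pivot relators, using Lemma \ref{lemmasum} (which makes $\kappa_2$ additive on products of relators, since $\kappa_1$ vanishes on every relator of $G$), Lemma \ref{lem:symmetry} (which reduces attention to the antisymmetric part), and Lemma \ref{lem:quasiMagnus} (to pass to Fox derivatives).

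When $e(\eta)\neq 0$, the projection $\pi$ annihilates $h$, so $\kappa_2$ of each centrality commutator is zero and the defining ideal is trivial, giving $\fh(\pi_\eta;\Q)=\Lie(2g)$. When $e(\eta)=0$, we have $\pi(h)=h$ and $\pi(z_j)=-(\beta_j/\alpha_j)h$: the commutators $[h,x_i]$, $[h,y_i]$ produce the centrality relations $[H,X_i]=[H,Y_i]=0$, while $[h,z_j]$ collapses to zero. The main calculational obstacle is the extra non-pivot word $w^{\ast}$; here every term of its Magnus expansion involving the letters $z_j$ or $h$ projects to a symmetric tensor in $h\otimes h$ and thus vanishes in the free Lie algebra, so the only surviving contribution comes from the commutator factors $\prod_i[x_i,y_i]$ inside $r_0$, yielding the relation $\sum_{i=1}^{g}[X_i,Y_i]=0$ up to a nonzero scalar. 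Combining these relations gives the stated presentation.
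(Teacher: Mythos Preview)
Your proposal is correct and follows essentially the same route as the paper: pass to an echelon presentation and apply Theorem~\ref{ThmholonomyLie}, computing $\kappa_2$ on the non-pivot relators in each of the two cases $e(\eta)=0$ and $e(\eta)\ne 0$. The paper carries this out more explicitly: it writes down the echelon approximation directly (in the $e(\eta)=0$ case the extra non-pivot relator is taken to be $([x_1,y_1]\cdots[x_g,y_g])^{\alpha_1\cdots\alpha_s}$) and then computes $\kappa$ on each relator by hand. You instead argue a bit more structurally, using the additivity of $\kappa_2$ on products of relators (Lemma~\ref{lemmasum}, valid since $\kappa_1$ vanishes on every relator) together with the observation that, under $\pi$, all $z_j$- and $h$-contributions collapse to multiples of $h\otimes h$ and hence die in $\Lie_2$. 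Both arguments produce the same relations; the paper's version is more concrete, yours slightly more conceptual.
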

 
\begin{proof}
First assume $e(\eta)=0$. In this case, the row-echelon approximation 
of $\pi_{\eta}$ has presentation 
\begin{equation}
\label{eq:pitilde}
 \begin{split}
\widetilde{\pi}_{\eta}&=\langle x_1, y_1,\dots, x_g, y_g, z_1,\dots,z_s, h\mid  
 z_i^{\alpha_i}h^{\beta_i}=1\: (i=1,\dots,s),
\\
&\qquad \qquad ([x_1,y_1]\cdots[x_g,y_g])^{\alpha_1\cdots\alpha_s}=1,\:  
h ~\textrm{central} 
 \rangle
 \end{split}
 \end{equation}
It is readily seen that the rank of the Jacobian matrix associated 
to this presentation has rank $s$.  
Furthermore, the map $\pi \colon F_{\Q}\rightarrow H_{\Q}$ is given 
by $x_i\mapsto x_i$, $y_i\mapsto y_i$, $z_j\mapsto (-\beta_i/\alpha_i)h$, $h\mapsto h$.  
Let $\kappa$ be  the Magnus expansion from Definition \ref{def:kappa}.  
A Fox Calculus computation shows that $\kappa$ takes the 
following values on the commutator-relators of $\widetilde{\pi}_{\eta}$: $\kappa([z_i,h])=1,$
\[
\begin{aligned}
\kappa([x_i,h])&=1+x_ih-hx_i+\textrm{terms ~of ~  degree}\geq 3,\\
\kappa([y_i,h])&=1+y_ih-hy_i+\textrm{terms ~of ~  degree}\geq 3, \\
\kappa (r) &=
1+(\alpha_1\cdots\alpha_s)(x_1y_1-y_1x_1+\dots+x_gy_g-y_gx_g)
+\textrm{terms ~of ~  degree}\geq 3, 
\end{aligned}
\]
where $r=([x_1,y_1]\cdots[x_g,y_g])^{\alpha_1\cdots\alpha_s}$.
The first claim now follows from Theorem \ref{ThmholonomyLie}.
 
Next, assume $e(\eta)\neq0$.  
Then the row-echelon approximation of $\pi_\eta$ is given by
\begin{equation}
\label{eq:pitilde-bis}
 \begin{split}
\widetilde{\pi}_{\eta}&=\langle x_1, y_1,\dots, x_g, y_g, z_1,\dots,z_s, h\mid  
 z_i^{\alpha_i}h^{\beta_i}=1\: (i=1,\dots,s),
\\
&\qquad \qquad ([x_1,y_1]\cdots
[x_g,y_g])^{\alpha_1\cdots\alpha_s}h^{e(\eta)\alpha_1\cdots\alpha_s}=1,\:  
h ~\textrm{central} 
\rangle,
\end{split}
\end{equation}
while the homomorphism $\pi\colon  F_{\Q}\rightarrow H_{\Q}$ is 
given by $x_i\mapsto x_i$, $y_i\mapsto y_i$, $z_j\mapsto (-\beta_i/\alpha_i)h$, 
and $h\mapsto 0$.  As before, the second claim follows from 
Theorem \ref{ThmholonomyLie}, and we are done.
\end{proof}

The Malcev Lie algebra of $\pi_{\eta}$, given in \cite[Thm.11.6]{SW-formality},
has an explicit presentation, which is the degree completion of the 
graded Lie algebra
\begin{equation}
\label{eq:seifertmalcev}
L(\pi_{\eta})=
\begin{cases}
\Lie(x_1,y_1,\dots,x_g,y_g,z )/\langle 
\sum_{i=1}^{g} [x_i,y_i]=0,\: z {~\rm central} \rangle 
 & \text{if $e(\eta)=0$};
 \\[2pt]
\Lie( x_1,y_1,\dots,x_g,y_g,w)/\langle 
\sum_{i=1}^{g} [x_i,y_i]=w,\: w {~\rm central} \rangle 
 & \text{if $e(\eta)\ne 0$},
\end{cases}
\end{equation}
where $\deg(w)=2$ and the other generators have degree $1$.
Moreover, $\gr(\pi_{\eta})\cong L(\pi_{\eta})$. 
From the presentation of $\pi_{\eta}$ and the definition of filtered formality, 
 we immediately obtain that fundamental groups of 
 orientable Seifert manifolds are filtered-formal.

\subsection{LCS ranks} 
\label{subsec:lcs seifert}
We end this section with a computation of the ranks 
of the various graded Lie algebras attached to the fundamental 
group of an orientable Seifert manifold. Comparing these ranks, we derive 
some consequences regarding the non-formality properties 
of such groups.

We start with the LCS ranks $\phi_k(\pi_{\eta})=\dim \gr_k(\pi_{\eta})$ 
and the holonomy ranks $\bar\phi_k(\pi_{\eta})=\dim \fh(\pi_{\eta})_k$. 

\begin{prop}
\label{prop:seifertlcs} 
The LCS ranks and the holonomy ranks of a Seifert manifold 
group $\pi_{\eta}$ are computed as follows.
\begin{enumerate}
\item \label{s1}
If $e(\eta)=0$, then $\phi_1(\pi_{\eta})=\bar\phi_1(\pi_{\eta})=2g+1$, and 
$\phi_k(\pi_{\eta})=\bar\phi_k(\pi_{\eta})=\phi_k(\Pi_{g})$ for $k\geq 2$.

\item  \label{s2}
If $e(\eta)\neq 0$, then 
$\bar\phi_k(\pi_{\eta})=\phi_k(F_{2g})$ for $k\geq 1$.

\item  \label{s3}
If $e(\eta)\neq 0$, then $\phi_1(\pi_{\eta})=2g$,   
$\phi_2(\pi_{\eta})=g(2g-1)$, 
and $\phi_k(\pi_{\eta})=\phi_k(\Pi_{g})$ for $k\geq 3$.
\end{enumerate}
Here the LCS ranks $\phi_k(\Pi_{g})$ are given by 
formula \eqref{eq:lcsRiemann}. 
\end{prop}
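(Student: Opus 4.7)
The plan is to extract the three rank formulas directly from Theorem \ref{thm:seifertholo} and the presentation \eqref{eq:seifertmalcev} of the graded Lie algebra $\gr(\pi_\eta)\cong L(\pi_\eta)$, keeping careful track of the degree of each generator and of the ideal spanned by each central element. Part (2) needs nothing more than $\bar\phi_k(\pi_\eta)=\dim\fh(\pi_\eta)_k$ applied to the free Lie algebra $\fh(\pi_\eta)\cong\Lie(2g)$ supplied by Theorem \ref{thm:seifertholo}.

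For part (1), where $e(\eta)=0$, Theorem \ref{thm:seifertholo} and \eqref{eq:seifertmalcev} give identical presentations for $\fh(\pi_\eta)$ and $\gr(\pi_\eta)$, so $\pi_\eta$ is graded-formal and $\bar\phi_k=\phi_k$ for every $k$. The central generator $h$ (respectively $z$) has degree $1$, and because it is central the Lie ideal it generates is just $\Q h$, concentrated in degree $1$. The quotient by this ideal is precisely $\fh(\Pi_g)\cong\gr(\Pi_g)$, so counting dimensions degree-by-degree gives $\phi_1=\bar\phi_1=2g+1$ and $\phi_k=\bar\phi_k=\phi_k(\Pi_g)$ for $k\geq 2$.

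For part (3), again using $\gr(\pi_\eta)\cong L(\pi_\eta)$ from \eqref{eq:seifertmalcev}, the adjoined generator $w$ has degree $2$, so only the $x_i,y_i$ contribute in degree $1$, giving $\phi_1=2g$. In degree $2$, the bracket part of $\Lie(2g)$ supplies $\binom{2g}{2}=g(2g-1)$ dimensions; adjoining $w$ adds one dimension, and the relation $w=[x_1,y_1]+\cdots+[x_g,y_g]$ removes it, giving $\phi_2=g(2g-1)$. For $k\geq 3$, centrality of $w$ forces the ideal $\langle w\rangle$ to equal the one-dimensional space $\Q w$, which lives entirely in degree $2$, so the short exact sequence of graded Lie algebras
\[
0 \longrightarrow \langle w\rangle \longrightarrow L(\pi_\eta) \longrightarrow L(\pi_\eta)/\langle w\rangle \longrightarrow 0
\]
restricts to an isomorphism $L_k(\pi_\eta)\isom (L(\pi_\eta)/\langle w\rangle)_k$ in every degree $k\geq 3$. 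Since $L(\pi_\eta)/\langle w\rangle\cong\Lie(2g)/\langle [x_1,y_1]+\cdots+[x_g,y_g]\rangle\cong\gr(\Pi_g)$, we conclude $\phi_k(\pi_\eta)=\phi_k(\Pi_g)$ for all $k\geq 3$.

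The main hazard is in part (3): one must be sure that imposing ``$w$ is central'' does not secretly create relations in degrees $\geq 3$ beyond those already present in $\gr(\Pi_g)$. This is exactly the content of the observation above, that because $w$ is central of degree $2$ the ideal $\langle w\rangle$ is confined to the single degree $2$; quotienting by it therefore affects only $\phi_2$, and the LCS ranks of $\pi_\eta$ agree with those of $\Pi_g$ from degree $3$ on.
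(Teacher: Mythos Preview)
Your proof is correct and, for parts \eqref{s2} and \eqref{s3}, follows essentially the same route as the paper: both arguments identify the quotient $L(\pi_\eta)/\langle w\rangle$ with $\gr(\Pi_g)$ and read off the ranks from the resulting short exact sequence $0\to \Lie(w)\to \gr(\pi_\eta)\to \gr(\Pi_g)\to 0$, using that the ideal generated by the central degree-$2$ element $w$ is the one-dimensional space $\Q w$.

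For part \eqref{s1} there is a small difference worth noting. The paper simply invokes the isomorphism $\pi_\eta\cong \Pi_g\times\Z$ when $e(\eta)=0$, from which the rank formulas follow at once. You instead compare the presentations of $\fh(\pi_\eta)$ and $\gr(\pi_\eta)$ from Theorem~\ref{thm:seifertholo} and \eqref{eq:seifertmalcev}, observe they coincide, and then run the same central-ideal argument with the degree-$1$ element $h$ in place of $w$. Your approach has the advantage of being entirely internal to the Lie-algebra machinery already set up in the paper, and avoids appealing to a group-level fact; the paper's approach is shorter but relies on knowing that the Seifert bundle is (virtually) a product when the Euler number vanishes.
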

  
\begin{proof}
If $e(\eta)=0$, then $\pi_{\eta}\cong \Pi_{g} \times \Z$, and 
claim \eqref{s1} readily follows. So suppose that $e(\eta)\neq 0$.  
In this case, we know from Theorem \ref{thm:seifertholo} 
that $\fh(\pi_{\eta})=\fh(F_{2g})$, and thus claim \eqref{s2} follows. 

By \eqref{eq:seifertmalcev}, the associated graded Lie algebra   
$\gr(\pi_{\eta})$ is isomorphic to the quotient 
of the free Lie algebra $\Lie( x_1,y_1,\dots,x_g,y_g,w)$  
by the ideal generated by the elements $\sum_{i=1}^{g} [x_i,y_i]-w$,  $[w,x_i]$, 
and $[w,y_i]$.   Define a morphism $\chi \colon \gr(\pi_{\eta})\to  
\gr(\Pi_g)$ by sending  $x_i\mapsto x_i$, $y_i\mapsto y_i$, and $w\mapsto 0$.
It is readily seen that the kernel of $\chi$ is the Lie ideal of 
$\gr(\pi_{\eta})$ generated by $w$, and this ideal is isomorphic 
to the free Lie algebra on $w$.   Thus, we get a short exact sequence 
of graded Lie algebras,
\begin{equation}
\label{eq:centext}
\xymatrix{0\ar[r]& \Lie(w) \ar[r]& \gr(\pi_{\eta}) \ar^{\chi}[r]& 
 \gr(\Pi_g) \ar[r]& 0
}.
\end{equation}
Comparing Hilbert series in this sequence establishes claim \eqref{s3} 
and completes the proof. 
\end{proof}

\begin{corollary}
\label{cor:seifertgraded}
If $g=0$, the group $\pi_{\eta}$ is always $1$-formal,  
while if $g>0$, the group $\pi_{\eta}$ is graded-formal if and only if $e(\eta)= 0$. 
\end{corollary}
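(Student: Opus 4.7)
The plan is to reduce the corollary to a question about graded-formality by invoking the filtered-formality of $\pi_\eta$ already established in \S\ref{subsec:holo seifert}: combined with the fact (recalled in \S\ref{subsec:grf}) that $1$-formality is equivalent to graded-formality plus filtered-formality, this shows both assertions of the corollary are equivalent to the single claim that $\pi_\eta$ is graded-formal if and only if $g=0$ or $e(\eta)=0$. To test graded-formality I will use the numerical criterion from Proposition \ref{prop:graded-formal-cir}(\ref{gfc2}), comparing $\bar\phi_k(\pi_\eta) = \dim \fh(\pi_\eta)_k$ with $\phi_k(\pi_\eta) = \dim \gr_k(\pi_\eta)$, both of which are already tabulated in Proposition \ref{prop:seifertlcs}.

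The cases where graded-formality is expected to hold are handled immediately. If $e(\eta)=0$, Proposition \ref{prop:seifertlcs}(\ref{s1}) gives $\bar\phi_k = \phi_k$ in every degree, regardless of $g$. If $g=0$ and $e(\eta)\neq 0$, Theorem \ref{thm:seifertholo} gives $\fh(\pi_\eta)=0$, while specializing the presentation \eqref{eq:seifertmalcev} to $g=0$ forces the defining relation to read $w=0$, so $\gr(\pi_\eta)=0$ as well. In both situations both sequences of ranks coincide trivially.

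The substantive step is proving non-graded-formality when $g\ge 1$ and $e(\eta)\neq 0$. In this regime, Proposition \ref{prop:seifertlcs}(\ref{s2}) gives $\bar\phi_k(\pi_\eta) = \phi_k(F_{2g})$, while Proposition \ref{prop:seifertlcs}(\ref{s3}) gives $\phi_k(\pi_\eta) = \phi_k(\Pi_g)$ for $k\ge 3$. I will exhibit a degree in which these differ by taking $k=3$: Witt's formula produces $\phi_3(F_{2g}) = \tfrac{1}{3}((2g)^3 - 2g)$, while Labute's formula \eqref{eq:lcsRiemann} produces $\phi_3(\Pi_g) = \tfrac{1}{3}((2g)^3 - 8g)$. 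Thus $\bar\phi_3(\pi_\eta) - \phi_3(\pi_\eta) = 2g$, which is strictly positive precisely when $g\ge 1$, so the map $\fh(\pi_\eta)\to\gr(\pi_\eta)$ cannot be an isomorphism.

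The main obstacle in this plan is the bookkeeping: making sure the Hilbert-series data from Proposition \ref{prop:seifertlcs} is assembled correctly in each of the four sub-cases, and that the degree at which the ranks disagree is identified unambiguously. As a conceptual cross-check, the failure of graded-formality in the non-trivial case can also be read off from Proposition \ref{prop:graded-formal-cir}(\ref{gfc1}): eliminating the degree-$2$ generator $w$ of $\gr(\pi_\eta)$ via the defining identity $w=\sum_{i=1}^{g}[x_i,y_i]$ in \eqref{eq:seifertmalcev} produces honest degree-$3$ relations $[\sum[x_i,y_i],x_j] = [\sum[x_i,y_i],y_j] = 0$ in $\Lie(x_1,y_1,\dots,x_g,y_g)$, and for $g\ge 1$ these cannot be deduced from any quadratic relations, so $\gr(\pi_\eta)$ is not quadratic.
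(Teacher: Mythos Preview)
Your proposal is correct and follows essentially the same approach as the paper: both arguments reduce to graded-formality via the filtered-formality of $\pi_\eta$, and both settle the key case $g>0$, $e(\eta)\neq 0$ by computing $\bar\phi_3(\pi_\eta)=(8g^3-2g)/3$ and $\phi_3(\pi_\eta)=(8g^3-8g)/3$ from Proposition~\ref{prop:seifertlcs} and noting the discrepancy $2g>0$. Your treatment is slightly more explicit in the $g=0$ case (where the paper simply says ``the other claims are clear''), and for $e(\eta)=0$ you invoke the rank equality of Proposition~\ref{prop:seifertlcs}\eqref{s1} rather than the direct comparison of presentations the paper uses, but these are cosmetic differences.
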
 

\begin{proof}
First suppose $e(\eta)=0$.  In this case, we know from \eqref{eq:seifertmalcev} 
that $\gr(\pi_{\eta})\cong\gr(\Pi_g)\times \gr (\Z)$.  
It easily follows that $\gr(\pi_{\eta})\cong \fh(\pi_{\eta})$ 
by comparing the presentations of these two Lie algebras. 
Hence, $\pi_{\eta}$ is graded-formal, and thus $1$-formal, by 
the fact that $\pi_{\eta}$ is filtered formal. 

It is enough to assume that $g>0$ and $e(\eta)\neq 0$, since 
the other claims are clear.  By Proposition \ref{prop:seifertlcs}, we 
have that  $\bar{\phi}_3(\pi_{\eta})=(8g^3-2g)/3$,  
whereas $\phi_3(\pi_{\eta})= (8g^3-8g)/3$. Hence, $\h(\pi_{\eta})$ is 
not isomorphic to $\gr(\pi_{\eta})$, proving that $\pi_{\eta}$ is not 
graded-formal.
\end{proof}

\subsection{Chen ranks} 
\label{subsec:chen seifert}
Recall that the Chen ranks are defined as $\theta_k(\pi)=\dim \gr_k(\pi/\pi'')$, 
while the holonomy Chen ranks are defined as $\bar\theta_k(\pi)=\dim (\h/\h'')_k$, 
where $\fh=\fh(\pi)$. 
 
\begin{prop}
\label{prop:seifert-chen}
The Chen ranks and the holonomy Chen ranks of a Seifert manifold 
group $\pi_{\eta}$ are computed as follows.
\begin{enumerate}
\item \label{sf1}
If $e(\eta)=0$, then $\theta_1(\pi_{\eta})=\bar\theta_1(\pi_{\eta})=2g+1$, and 
$\theta_k(\pi_{\eta})=\bar\theta_k(\pi_{\eta})=\theta_k(\Pi_{g})$ for $k\geq 2$.

\item  \label{sf2}
If $e(\eta)\neq 0$, then 
$\bar\theta_k(\pi_{\eta})=\theta_k(F_{2g})$ for $k\geq 1$.

\item  \label{sf3}
If $e(\eta)\neq 0$, then $\theta_1(\pi_{\eta})=2g$,   
$\theta_2(\pi_{\eta})=g(2g-1)$, and
$\theta_k(\pi_{\eta})=\theta_k(\Pi_{g})$ for $k\geq 3$.
\end{enumerate}
Here the Chen ranks $\theta_k(F_{2g})$ and $\theta_k(\Pi_{g})$ are given by 
formulas \eqref{eq:chenrankF} and \eqref{eq:chenRiemann ranks}, respectively. 
\end{prop}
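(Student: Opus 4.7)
The plan is to follow the same pattern as the proof of Proposition \ref{prop:seifertlcs}, combining the presentations of $\fh(\pi_\eta)$ and $\gr(\pi_\eta)$ from Theorem \ref{thm:seifertholo} and \eqref{eq:seifertmalcev} with the filtered formality of $\pi_\eta$ noted right after \eqref{eq:seifertmalcev}. Together with a standard result from \cite{SW-formality}, filtered formality yields the identification $\gr(\pi_\eta/\pi_\eta'')\cong \gr(\pi_\eta)/\gr(\pi_\eta)''$, and the analogous identifications hold for $\Pi_g$ and $F_{2g}$ by their $1$-formality. Thus every Chen rank in the statement can be read off as the Hilbert coefficient of an explicit metabelian quotient of a graded Lie algebra.

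Parts \eqref{sf1} and \eqref{sf2} will be handled directly from the presentations. For \eqref{sf2}, Theorem \ref{thm:seifertholo} gives $\fh(\pi_\eta)\cong \fh(F_{2g})$ when $e(\eta)\neq 0$, so metabelianizing produces $\fh(\pi_\eta)/\fh(\pi_\eta)''\cong \fh(F_{2g})/\fh(F_{2g})''$ and hence $\bar\theta_k(\pi_\eta)=\bar\theta_k(F_{2g})=\theta_k(F_{2g})$, the last equality via \eqref{eq:chen equal} and the $1$-formality of $F_{2g}$. For \eqref{sf1}, the decomposition $\pi_\eta\cong \Pi_g\times \Z$ descends to $\pi_\eta/\pi_\eta''\cong (\Pi_g/\Pi_g'')\times \Z$, so $\gr(\pi_\eta/\pi_\eta'')\cong \gr(\Pi_g/\Pi_g'')\oplus \Q$ with the extra $\Q$ in degree $1$; similarly the splitting $\fh(\pi_\eta)\cong \fh(\Pi_g)\oplus \Q\cdot h$ with $h$ central (from Theorem \ref{thm:seifertholo}) yields $\fh(\pi_\eta)/\fh(\pi_\eta)''\cong \fh(\Pi_g)/\fh(\Pi_g)''\oplus \Q\cdot h$, and \eqref{eq:chen equal} applies since $\pi_\eta$ is $1$-formal by Corollary \ref{cor:seifertgraded}.

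Part \eqref{sf3}, where $\pi_\eta$ fails to be $1$-formal, is the substantive case. The strategy is to apply the right-exact functor $L\mapsto L/L''$ to the central extension \eqref{eq:centext} and to identify the kernel of the induced surjection $\gr(\pi_\eta)/\gr(\pi_\eta)''\twoheadrightarrow \gr(\Pi_g)/\gr(\Pi_g)''$ with $\Lie(w)/(\Lie(w)\cap \gr(\pi_\eta)'')$. The decisive point is that $\Lie(w)=\Q\cdot w$ is concentrated in degree $2$ while $\gr(\pi_\eta)''$ is concentrated in degrees $\geq 4$, so the intersection vanishes, producing the short exact sequence of graded Lie algebras
\[
0\longrightarrow \Q\cdot w \longrightarrow \gr(\pi_\eta)/\gr(\pi_\eta)''
\longrightarrow \gr(\Pi_g)/\gr(\Pi_g)'' \longrightarrow 0
\]
with $\Q\cdot w$ in degree $2$ only. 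Comparing Hilbert series degree by degree yields $\theta_k(\pi_\eta)=\theta_k(\Pi_g)+\delta_{k,2}$, and substituting the Chen ranks of $\Pi_g$ recorded in \S\ref{subsec:surf gp} delivers the stated values. The main obstacle is importing the identification $\gr(G/G'')\cong \gr(G)/\gr(G)''$ for filtered-formal but not necessarily graded-formal $G$ from \cite{SW-formality}, since Theorem \ref{thm:chenlie} as stated requires $1$-formality; once this is granted, the rest is a clean dimension count tracking the central degree-$2$ element $w$ through the metabelianized central extension.
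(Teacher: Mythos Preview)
Your proposal is correct and follows essentially the same route as the paper: claims \eqref{sf1} and \eqref{sf2} are handled exactly as in Proposition~\ref{prop:seifertlcs}, and for claim \eqref{sf3} the paper invokes filtered-formality of $\pi_\eta$ via \cite[Theorem~9.3]{SW-formality} to get $\gr(\pi_\eta/\pi_\eta'')\cong\gr(\pi_\eta)/\gr(\pi_\eta)''$, then records the short exact sequence $0\to\Lie(w)\to\gr(\pi_\eta/\pi_\eta'')\to\gr(\Pi_g/\Pi_g'')\to 0$ and compares Hilbert series. Your explicit identification of the kernel as $\Lie(w)/\bigl(\Lie(w)\cap\gr(\pi_\eta)''\bigr)=\Q\cdot w$ via the degree argument is a welcome elaboration of what the paper compresses into the phrase ``as before,'' and your remark that the needed input from \cite{SW-formality} is the filtered-formal (not $1$-formal) version of the Chen Lie algebra identification is exactly on point.
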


\begin{proof}
Claims \eqref{sf1} and \eqref{sf2} are easily proved, as in 
Proposition \ref{prop:seifertlcs}.  To prove claim \eqref{sf3}, 
start by recalling that 
the group $\pi_{\eta}$ is filtered-formal. 
Hence, from \cite[Theorem 9.3]{SW-formality}, 
the Chen Lie algebra $\gr(\pi_{\eta}/\pi_{\eta}'')$ is 
isomorphic to $\gr(\pi_{\eta})/\gr(\pi_{\eta})''$.  
As before, we obtain a short exact sequence of graded Lie algebras,
\begin{equation}
\label{eq:centextension}
\xymatrixcolsep{18pt}
\xymatrix{0\ar[r]& \Lie(w) \ar[r]& \gr(\pi_{\eta}/\pi_{\eta}'') \ar[r]& 
 \gr(\Pi_g/\Pi_g'') \ar[r]& 0
}.
\end{equation}
Comparing Hilbert series in this sequence completes the proof. 
\end{proof}

\begin{remark}
\label{rem:theta3}
The above result can be used to give another proof of Corollary \ref{cor:seifertgraded}. 
Indeed, suppose $e(\eta)\neq 0$.  Then, by Proposition \ref{prop:seifert-chen}, 
we have that $\bar{\theta}_3(\pi_{\eta})-\theta_3(\pi_{\eta})=2g$. Consequently, 
the group $\pi_{\eta}$ is not $1$-formal. 
The group $\pi_{\eta}$ is not graded-formal, since it is filtered-formal. 
\end{remark}

\newcommand{\arxiv}[1]
{\texttt{\href{http://arxiv.org/abs/#1}{arXiv:#1}}}
\newcommand{\arx}[1]
{\texttt{\href{http://arxiv.org/abs/#1}{arxiv:}}
\texttt{\href{http://arxiv.org/abs/#1}{#1}}}
\newcommand{\doi}[1]
{\texttt{\href{http://dx.doi.org/#1}{doi:#1}}}
\renewcommand*\MR[1]{%
\StrBefore{#1 }{ }[\temp]%
\href{http://www.ams.org/mathscinet-getitem?mr=\temp}{MR#1}}

\bibliographystyle{amsplain}

\end{document}